\newtheorem{theorem}{\bf Theorem}[section]
\newtheorem{lemma}[theorem]{\bf Lemma}
\newtheorem{definition}[theorem]{\bf Definition}
\newtheorem{remark}[theorem]{\bf Remark}
\newtheorem{proposition}[theorem]{\bf Proposition}
\numberwithin{equation}{section}
\newcommand{\R}{\mathbb{R}}
\newcommand{\N}{\mathbb{N}}
\def \rnn {{\mathbb {R}}^{2n+1}}
\def \I {{\mathbb{I}}}
\def \OO {{\mathbb{O}}}
\def \L {\mathscr{L}}
\def \A {\mathscr{A}}
\def \GG {{\mathbb{G}}}
\def \lL {{\mathbb{K}}}
\def \div {{\text{\rm div}}}
\def \tr {{\text{\rm Tr}}}
\def \loc {{\text{\rm loc}}}
\def \p {\partial}
\def \diag {{\text{\rm diag}}}
\def \det {{\text{\rm det}}}
\def \H {\mathcal{Q}}
\newenvironment{sergiorev}{\color{blue}}{\color{black}}
\newcommand{\bsr}{\begin{sergiorev}}
\newcommand{\esr}{\end{sergiorev}}
\newenvironment{frarev}{\color{red}}{\color{black}}
\newcommand{\bfra}{\begin{frarev}}
\newcommand{\efra}{\end{frarev}}
\newcommand{\denoterow}[1]{\rlap{\hspace{1em}$\leftarrow{}$ $j^{th}$}}
\def \D {{\Delta}}
\newenvironment{proof}{\noindent {\sc Proof.}}{\hfill $\square$}
\def \o {{\omega}}
\def \a {{\alpha}}
\def \b {{\beta}}
\def \g {{\gamma}}
\def \d {{\delta}}
\def \e {{\varepsilon}}
\def \epsilon {{\varepsilon}}
\def \k {{\kappa}}
\def \l {{\lambda}}
\def \s {{\sigma}}
\def \t {{\tau}}
\def \m {{\mu}}
\def \x {{\xi}}
\def \z {{\zeta}}
\def \w {{\omega}}
\def \phi {{\varphi}}
\def \G {{\Gamma}}
\def \O {{\Omega}}
\def \ker {{{\rm Ker}}}
\def \AS {{ \mathscr{A}_{(x_{0}, t_{0})} }}
\definecolor{shadecolor}{rgb}{.9, .9, 1}
\DeclarePairedDelimiter{\abs}{\lvert}{\rvert}
\DeclarePairedDelimiter{\bnorm}{\parallel}{\parallel}
\DeclarePairedDelimiter{\babs}{\lvert}{\rvert}
\title{A survey on the classical theory for Kolmogorov equation}
\author{{\sc{Francesca Anceschi}
\thanks{Dipartimento di Scienze Fisiche, Informatiche e Matematiche, Universit\`{a} di Modena e Reggio Emilia, Via
Campi 213/b, 41125 Modena (Italy). E-mail: francesca.anceschi@unimore.it} \qquad \sc{Sergio Polidoro}
\thanks{Dipartimento di Scienze Fisiche, Informatiche e Matematiche, Universit\`{a} di Modena e Reggio Emilia, Via
Campi 213/b, 41125 Modena (Italy). E-mail: sergio.polidoro@unimore.it}
}}
\date{}
\begin{document}
\maketitle

\begin{abstract}
    We present a survey on the regularity theory for classic solutions to subelliptic degenerate Kolmogorov equations. 
    In the last part of this note we present a detailed proof of a Harnack inequality and a strong maximum principle.
\end{abstract}

\bigskip
\normalsize
\tableofcontents

\section{Introduction}
\label{sec1}
Kolmogorov equations appear in the theory of stochastic processes as linear second order parabolic equations with 
non-negative characteristic form. Throughout this paper we are mainly concerned with degenerate Kolmogorov equations.
In its simplest form, if $\left(W_t \right)_{t \ge 0}$ denotes a real Brownian motion, the density $p=p(t,v,y,v_{0},y_{0})$ of the stochastic process  $(V_{t}, Y_{t})_{t \ge 0}$
\begin{equation}
   \label{processo}
    \begin{cases}
    V_t = v_{0} + \s W_{t} \\
    Y_t = y_{0} + \int_{0}^{t} V_{s} \, ds
    \end{cases}
\end{equation}
is a solution to a strongly degenerate Kolmogorov equation, that is
\begin{equation}
	\label{sdk}
        \tfrac12 \s^{2} \p_{vv} p + v \p_y p = \p_t p,  \qquad t \ge 0, \qquad
    (v,y) \in \R^2.
\end{equation}
In 1934 Kolmogorov provided us with the explicit expression of the density $p=p(t,v,y,v_{0},y_{0})$ of the above equation (see \cite{K1}) 
\begin{equation}
   \label{sf}
   p(t,v,y, v_{0}, y_{0}) = \tfrac{\sqrt{3}}{2 \pi t^2} 
    \exp \left( - \tfrac{(v-v_{0})^2}{t} - 3 \tfrac{(v-v_{0})(y - y_{0} - t v_{0})}{t^2} - 3 \tfrac{(y - y_{0} - t y_{0})^2}{t^3} \right) \qquad t > 0,
\end{equation}
and pointed out that it is a smooth function despite the strong degeneracy of the equation \eqref{sdk}. As it is suggested by the smoothness of the density $p$, the operator $\L$ associated to equation \eqref{sdk}
\begin{equation} \label{KolmoR3}
	\L := \tfrac12 \s^{2} \p_{vv} + v \p_y - \p_t, 
\end{equation}
is hypoelliptic, in the sense of the following definition, that we state for a general second order differential operator $\L$ acting on an open subset $\O$ of $\R^N$.

\medskip

\noindent 
{\sc Hypoellipticity.} \textit{The operator $\L$ is hypoelliptic if, for every distributional solution $u \in L^1_{\loc}(\O)$  to the equation $\L u = f$, we have that}
    \begin{equation} \label{hypo}
        f \in C^\infty(\O) \quad \Rightarrow \quad u \in C^\infty(\O).
    \end{equation}

\medskip

H\"ormander considered the operator $\L$ defined in \eqref{KolmoR3} as a prototype for the family of hypoelliptic operators studied in his seminal work \cite{H}. Specifically, the operators considered by H\"ormander are of the form 
\begin{equation}
	\label{ol}
	\L = \sum_{k=1}^{m} X^{2}_{k} + Y,
\end{equation}
where $m$ is a natural number and $X_{k}$ are smooth vector fields of the form
\begin{equation}
	\label{vector}
	X_{k} = \sum \limits_{j=1}^{N+1} b_{j,k} (z) \, \p_{z_{j}},
	\qquad Y = \sum \limits_{j=1}^{N+1} b_{j,m+1} (z) \, \p_{z_{j}} \quad k = 1, 	\ldots, m,
\end{equation}
with $b_{j,k} \in C^{\infty}(\O)$ for every $j=1, \ldots, N+1$, $k=1, \ldots, m+1$ and $\O$ is any open subset of $\R^{N+1}$.
The main result presented in \cite{H} is a sufficient condition to the hypoellipticity of $\L$. Its statement requires some notation. Given two 
vector fields $Z_{1}, Z_{2}$, the commutator of $Z_{1}$ and $Z_{2}$ is the vector field:
\begin{equation*}
	[Z_{1},Z_{2}] = Z_{1} \, Z_{2} -  Z_{2} \, Z_{1}.
\end{equation*}
Moreover, we recall that ${\rm Lie} (X_1, \ldots, X_m, Y)$ is the Lie algebra generated by the vector fields 
$X_1, \ldots, X_m, Y$ and their commutators. 

\medskip

\noindent 
{\sc H\"ormander's Rank Condition.} \textit{Suppose that
    \begin{equation}
        \label{rc1}
        {\rm rank} \, {\rm Lie} (X_1, \ldots, X_m, Y)(z) = N+1 \qquad \text{for every} \quad z \in \O.
    \end{equation}
    Then the operator $\L$ defined in \eqref{ol} is hypoelliptic in $\O$,}

\medskip
Let us consider again the operator $\L$ defined in \eqref{KolmoR3} with $\sigma = \sqrt{2}$ to simplify the notation. 
$\L$ can be written in the form \eqref{ol} if we choose 
\begin{equation*}
	X =  \p_{v}  \sim (0, 1, 0)^{T},
	\qquad Y = v \p_y - \p_t \sim (-1, 0, v)^{T},
\end{equation*}
and the H\"ormander's rank condition is satisfied, as
\begin{equation*}
	[X,Y] = XY - Y X = \p_{y} \sim (0, 0, 1)^{T}.
\end{equation*}

\medskip

As the regularity properties of H\"ormander's operators $\L$ are related to a Lie algebra, it became clear that the natural framework for 
the regularity theory of H\"ormander's operators is the non-euclidean setting of Lie groups, as Folland and Stein pointed out in \cite{FS}. 
Later on, Rothschild and Stein developed a general regularity theory for H\"ormander's operators in \cite{RS}. We refer to the more recent 
monograph by Bonfiglioli, Lanconelli and Uguzzoni \cite{BLU} for a comprehensive treatment of the recent achievements of the theory.
We also recall the book \cite{Bramanti} by Bramanti. As far as we are concerned with the operator $\L$, we show that it is invariant with respect to the non-commutative traslation given by the 
following composition law 
\begin{equation*}
    (t,v,y) \circ (t_{0}, v_{0}, y_{0}) = (  t_{0} + t, v_{0} + v, y_{0} + y - t v_{0}), \qquad (t,v,y), (t_{0}, v_{0}, y_{0}) \in \R^{3}.
\end{equation*}
Indeed, if $w(t,v,y) = u ( t_{0} + t, v_{0} + v, y_{0} + y - t v_{0})$ and 
$g(t,v,y) = f( t_{0} + t, v_{0} + v, y_{0} + y - t v_{0})$, then 
\begin{equation*}
	\L u = f \quad \iff \quad \L w = g \quad \text{for  every} \quad (t_{0}, v_{0}, y_{0}) \in \R^{3}.
\end{equation*}
As we will see in the sequel, in several applications the couple $(v,y)$ denotes the velocity and the position of a particle. For this reason 
the above operation is also known as \emph{Galilean} change of variable.

Another remarkable property of the operator $\L$ is its dilation invariance. More precisely, the operator $\L$ is invariant with respect to the 
following family of dilations
\begin{equation*}
    \d_{r} (t,v,y) := (r^2 t, r v, r^3 y), \qquad r >0,
\end{equation*} 
with the following meaning: if we define $ w (t,v,y) = u (r^2 t, r v, r^3 y)$ and $g(t,v,y)=f(r^2 t, r v, r^3 y)$ we have that
\begin{equation*}
    \L u = f \quad \iff \quad \L w = r^2  g \quad \text{for  every} \quad r > 0.
\end{equation*}
As we will see in the sequel, this underlying invariance property plays a fundamental role in the study of the operator $\L$, even though it 
does not hold true for every Kolmogorov operator (see Section \ref{sec2}), as it happens in the family of uniformly parabolic operators. 
Indeed, we usually consider parabolic dilations $\d_{r}(x,t)=(rx, r^{2} t)$ also when considering the Ornstein-Uhlenbeck operator 
$\L = \Delta - \langle x, \nabla \rangle - \partial_t$.

\medskip

We conclude this introduction discussing about some applications of the Kolmogorov equation. First of all, the process \eqref{processo} is the solution to the Langevin equation
\begin{equation*}
    \begin{cases}
    d V_t = d W_{t} \\
    d Y_t =  V_{t} \, dt,
    \end{cases}
\end{equation*}
hence Kolmogorov equations are related to every stochastic process satisfying Langevin equation. In particular, several 
mathematical models involving linear and non linear Kolmogorov type equations have also appeared in finance \cite{AD}, 
\cite{B}, \cite{BP} and \cite{DHW}. Indeed, equations of the form \eqref{sdk} appear in various models for pricing of 
path-dependent financial instruments (cf., for instance, \cite{BPVE} \cite{PA}), where, for example the equation
\begin{equation}
    \label{bpv}
     \p_t P + \tfrac12 \s^{2} S^2 \p^2_S P + (\log S ) \p_{A} P + r (S \p_{S} P - P) = 0, \qquad S > 0, 
    \, A, t \in \R
\end{equation}
arises in the Black and Scholes option pricing problem
\begin{equation*}
    \begin{cases}
    d S_t = \m S_{t} dt + \s S_{t} d W_{t} \\
    d A_t =  S_{t} \, dt,
    \end{cases}
\end{equation*}
where $\s$ is the volatility of the stock price $S$, $r$ is the interest rate of a risckless bond and $P= P(S, A, t)$ is the price of the Asian option depending on the price of the stock $S$, the geometric average $A$ of the
past price and the time to maturity $t$.  

Moreover, we recall that the Kolmogorov equation is the prototype for a family of evolution equations arising in kinetic theory of gases which take the following general form 
\begin{equation}
    \label{kt}
    Y u = \mathcal{J} (u).
\end{equation}
In this case, we have that $u=u(v,y,t)$ is the density 
of particles with velocity $v=(v_{1}, \ldots, v_{n})$ and position $y=(y_{1}, \ldots, y_{n})$ at time $t$. 
Moreover, 
\begin{equation*}
    Y u := \sum \limits_{j=1}^n v_j \p_{y_{j}} u + \p_t u
\end{equation*}
is the so called total derivative with respect to time in the phase space 
$\rnn$. $\mathcal{J} (u)$ is the collision operator, which can be either 
linear or non-linear. For instance, in the usual Fokker-Planck 
equation (cf. \cite{DV}, \cite{R}) we have a linear collision operator  of the form
\begin{equation*}
    \mathcal{J} (u) = \sum \limits_{i,j=1}^n a_{ij} \, \p_{v_{i}, v_{j}}^2 u + \sum 
    \limits_{i=1}^n a_i \, \p_{v_{i}} u + a u
\end{equation*}
where $a_{ij}$, $a_i$ and $a$ are functions of $(x,t)$; $\mathcal{J} (u)$ can
also occur in divergence form
\begin{equation*}
     \mathcal{J} (u) = \sum \limits_{i,j=1}^n \p_{v_i} (a_{ij} \, \p_{v_{j}} u + 
     b_i u ) + \sum \limits_{i=1}^n a_{i} \p_{v_{i}} u + a u.
\end{equation*}
We also mention the following non-linear collision operator of the 
Fokker-Planck-Landau type
\begin{equation*}
    \mathcal{J} (u) = \sum \limits_{i,j=1}^n \p_{v_i} \big(a_{ij} (z,u) \p_{v_{j}} 
    u + b_i(z,u) \big), 
\end{equation*}
where the coefficients $a_{ij}$ and $b_i$ depend both on $z \in \rnn$ and the unknown functions $u$ through some integral expression. 
Moreover, this last operator is studied as a simplified version of the Boltzmann collision operator (cf. \cite{C}, \cite{L}). For the description 
of wide classes of stochastic processes and kinetic models leading to equations of the previous type, we refer to the classical 
monographies \cite{C}, \cite{CC} and \cite{DM}. For further applications we refer to the article \cite{spigler} by Akhmetov, Lavrentiev and Spigler, to the work \cite{tersenov} by Tersenov, and to the references therein.

\medskip
 
The regularity theory for weak solutions to the Kolmogorov equation with measurable coefficients in divergence form
has been developed during the last decade and is still evolving.  
As the aim of this survey is to give an overview of the established theory, we simply recall some of the main results on this subject. Operators with VMO coefficients $a_{ij}$ have been studied in \cite{BramantiCeruttiManfredini} by Bramanti, Cerutti and Manfredini, \cite{ManfrediniPolidoro} by Manfredini and Polidoro, and in
\cite{PolidoroRagusa1}, \cite{PolidoroRagusa2} by Polidoro and Ragusa. 
The theory of Kolmogorov operators with measurable coefficients $a_{ij}$ is developed in the following papers:
\begin{itemize}
	\item \textbf{Moser iteration:} 
		a first contribution is given by Polidoro and Pascucci (see \cite{PP}) for dilation invariant
		Kolmogorov operators with measurable coefficients; later on, Cinti, Pascucci and Polidoro extend 
		this result to the non-dilation invariant case (see \cite{CPP}). Finally, the non-dilation invariant case 
		with lower order coefficients with positive divergence is proved by the authors and Ragusa (see \cite{APR}).
	\item \textbf{Poincaré inequality and H\"older regularity:}
		a weak Poincaré inequality is proved by Wang and Zang in \cite{WZ3} for the dilation invariant case and in 
		\cite{WZ4} for the non-dilation invariant one.  Related results have been recently proved by Armstrong and 
		Mourrat for the kinetic Kolmogorov-Fokker-Planck equation in \cite{AM}.
	\item \textbf{Harnack inequality:} 
	        Golse, Imbert, Mouhot and Vasseur prove the H\"older continuity and a Harnack inequality for weak solutions  
	        to the kinetic Kolmogorov-Fokker-Planck equation (see \cite{GIMV}). Based on their results, 
	        The authors and Eleuteri prove a geometric statement for the Harnack inequality (see \cite{AEP}).
\end{itemize}
As far as we are concerned with regularity theory for weak solutions to the Kolmogorov equation with measurable 
coefficients in non-divergence form, the only result available is due to Abedin and Tralli, who prove a Harnack inequality  
for this type of operators with additional Cordes-Landis assumption on the coefficients $a_{ij}$ (see \cite{AT}).
We finally recall the recent article \cite{GT1} by Garofalo and Tralli, where nonlocal operators $(-\L)^s$ and their stationary counterparts are introduced. In particular, Hardy-Littlewood-Sobolev inequalities, Poincar\'e-type inequalities, and nonlocal isoperimetric inequalities are proved in \cite{GT2}, \cite{GT3}, and \cite{GT4}, respectively.

\medskip

This paper is organized as follows. Section \ref{sec2} is devoted to the study of Kolmogorov equations with constant coefficients and to the description of the underlying geometry for the study of this kind of equations. 
Section \ref{sec3} and Section \ref{sec4} are devoted to the analysis of Kolmogorov equations with H\"older continuous coefficients. In Section \ref{sec3} are presented the main known results concerning the fundamental solution, then the Cauchy and the Cauchy-Dirichlet problems are discussed. In Section \ref{sec4} mean value forumulas and Harnack type inequalities are described. This section also contains the detailed proof of a strong maximum principle for Kolmogorov operators in non divergence form.

\section{Kolmogorov operator with constant coefficients}
\label{sec2}
In the sequel of this section we consider the family of Kolmogorov operators of the form
\begin{equation} \label{cost-op}
\begin{split}
    \L := & \sum \limits_{i,j=1}^N a_{ij} \p^{2}_{x_i x_j} + \sum \limits_{i,j=1}^N b_{ij} x_i \p_{x_j} - \p_t \\
    = & \tr (A D^{2} u) + \langle B x , D u \rangle - \p_{t} u , 
    \qquad x \in \R^{N}, \, t \in \R, 
\end{split}
\end{equation}
where $A=(a_{ij})_{i,j=1,\ldots,N}$ and $B=(b_{ij})_{i,j=1,\ldots,N}$ are matrices with real constant coefficients, $A$ symmetric and non negative.
As we explained in Section \ref{sec1}, the fundamental solution to the degenerate equation \eqref{sdk} can be seen as the density of the solution to the stochastic differential equation \eqref{processo}. This is also the case when we consider a higher dimension. 
Specifically, let $\s$ be a $N \times m$ constant matrix, $B$ as in \eqref{cost-op}, and let $(W_{t})_{t \ge 0}$ be a $m$-dimensional Wiener process. Denote by $(X_{t})_{t \ge 0}$ the solution to the following $N$-dimensional Stochastic Differential Equation (SDE in short)
\begin{equation}
	\label{stoc}
	\begin{cases}
	d X_{t} = - B X_{t} \, dt + \s \, dW_{t} \\
	X_{t_{0}}=x_{0}.
	\end{cases}
\end{equation}
Then the \emph{backward Kolmogorov operator} $\mathcal{K}_{b}$ of $(X_{t})_{t \ge 0}$ acts on sufficiently regular functions $u$ as follows
\begin{align*} 
    \mathcal{K}_{b} u(y,s) = \p_s u(y,s) + \sum \limits_{i,j=1}^N  a_{ij} \p^{2}_{y_i y_j} u(y,s) - \sum \limits_{i,j=1}^N b_{ij} y_i \p_{y_j} u(y,s).
\end{align*}
where
\begin{equation} \label{a-def}
	A =\tfrac12 \s \s^{T},
\end{equation}
and the \emph{forward Kolmogorov operator} $\mathcal{K}_{f}$ of $(X_{t})_{t \ge 0}$ is the adjoint $\mathcal{K}_{b}^*$ of $\mathcal{K}_{b}$, that is
\begin{align*} 
    \mathcal{K}_{f} v(x,t)  = - \p_t v(x,t) + \sum \limits_{i,j=1}^N a_{ij} \p^{2}_{x_i x_j} v(x,t) + 
    \sum \limits_{i,j=1}^N b_{ij} x_i \p_{x_j} v(x,t) + \text{tr} (B) v(x,t),
\end{align*}
for sufficiently regular functions $v$. Note that $\mathcal{K}_f$ operator agrees with $\L$ in \eqref{cost-op} up to a multiplication of the solution by $\exp ( t \, \text{tr} (B))$. Also note that, because of \eqref{a-def}, it is natural to consider in \eqref{cost-op} a symmetric and non negative matrix $A$. When the matrix $A$ is strictly positive, the solution $(X_{t})_{t \ge 0}$ of the SDE \eqref{stoc} has a density $p=p(t-s,x,y)$ which is a solutions of the equations $\mathcal{K}_{b} p = 0$ and $\mathcal{K}_{f} p = 0$ in the following sense. For every $(x,t) \in \R^{N+1}$, the function $u(y,s) :=p(t-s,x,y)$ is a classical solution to the equation $\mathcal{K}_{b} u= 0$ in $\R^n \times ]- \infty, t[$ and, for every $(y,s) \in \R^{N+1}$, the function $v(x,t) = p(t-s,x,y)$ is a classical solution to $\mathcal{K}_{f}  v = 0$ in $\R^n \times ]s, + \infty[$. 
This is not always the case when $A$ is degenerate. In the sequel we give necessary and sufficient conditions on $A$ and $B$ for the existence of a density $p$ for the stochastic process $(X_{t})_{t \ge 0}$. These conditions are also necessary and sufficient for the hypoellipticity of $\L$. In order to state the afore mentioned conditions, we introduce some further notation. Following H\"ormander (see p. 148 in \cite{H}), we set, for every $t \in \R$,
\begin{equation} 	\label{c}
	E(t) = \text{exp} (-t B ), \qquad C(t) = \int_{0}^{t} E(s) \, A \, E^{T}(s) \, ds.
\end{equation}
The matrix $C(t)$ is symmetric and non-negative for every $t>0$, nevertheless it may occur that it is strictly positive. 
If this is the case, then $C(t)$ is invertible and the fundamental solution $\G(x_{0}, t_{0}; x,t)$ of $\L$ is
\begin{equation} \label{fun-sol}
    \G (x, t; \x, \t ) = \G ( x - E(t-\t)\x, t-\t),
\end{equation}
where $\G (x,t)=\G(x,t;0,0)$. Moreover, $\G(x,t) = 0$ for every $t \le 0$ and
\begin{equation} \label{fun-sol2}
    \G (x,t)=
    \frac{(4 \pi)^{-\frac{N}{2}}}{\sqrt{\text{det} C(t)}} \hspace{1mm}
            \text{exp} \left( - \frac{1}{4} \langle C^{-1} (t) x, x \rangle
            - t \hspace{1mm} \text{tr}(B) \right), \qquad t>0.
\end{equation}
The last notation we need to introduce allows us to write the operator $\L$ in the form \eqref{ol}. To do that, we recall that 
$\s = \left(\sigma_{jk}\right)_{j=1, \dots, N \atop k = 1, \dots, m}$ is a matrix with constant coefficients, and we set
\begin{equation} \label{e-fields}
 X_k := \frac{1}{\sqrt{2}} \sum_{j=1}^N \sigma_{jk} \p_{x_j}, \quad k= 1, \dots, m, 
 \qquad Y := \sum \limits_{i,j=1}^N b_{ij} x_i \p_{x_j} - \p_t.
\end{equation}
This allows us to rewrite the operator $\L$ in the form \eqref{ol} $\L = \sum_{j=1}^m X_{j}^{2} + Y$.
The following result holds true.

\begin{proposition}
	\label{equivalence}
Consider an operator $\L$ of the form \eqref{cost-op}, and let $\sigma$ be a $N \times m$ constant matrix such that 
$A$ writes as in \eqref{a-def}. Let $X_{1}, \ldots, X_{m}$, and $Y$ be the vector fields defined in \eqref{e-fields}.
Then the following statements are equivalent
\begin{enumerate}
	\item[{\rm \bf C1.}] (H\"ormander's condition):
	${\rm rank \, Lie} (X_{1}, \ldots, X_{m}, Y) (x,t) = N + 1$ for every $(x,t) \in \R^{N+1}$;  
	\item[{\rm \bf C2.}] $\ker (A)$ does not contain non-trivial subspaces which are invariant for $B$;
	\item[{\rm \bf C3.}] $C(t) > 0$ for every $t>0$, where $C(t)$ is defined in \eqref{c};
	\item[{\rm \bf C4.}] (Kalman's rank condition): 
	${\rm rank} \, \left( \s, B \s, \ldots, B^{N-1} \s \right) = N$;
	\item[{\rm \bf C5.}] for some basis of $\R^N$ the matrices $A$ and $B$ take the following block form 
		\begin{equation}
  				 \label{A}
    				A = \begin{pmatrix} 
      				  A_0 & \OO \\
       				  \OO & \OO
      				  \end{pmatrix}
			\end{equation}
    where $A_0$ is a symmetric strictly positive $m_0 \times m_0$ matrix, with $m_0 \le m$, and
			\begin{equation}
 			   \label{B}
 			   B = \begin{pmatrix}
 			       * &  * & \ldots & * & *  \\
 			       B_1 & * &  \ldots & * & * \\
     				   \OO & B_{2}  & \ldots & * & * \\
     				   \vdots & \vdots  & \ddots & \vdots & \vdots  \\
     				   \OO & \OO  & \ldots & B_{\k} & * 
   			 \end{pmatrix}
   			 =
      \begin{pmatrix}
         B_{0,0} &    B_{0,1}  & \ldots &     B_{0, \k - 1}  &   B_{0, \k }  \\  
        B_1   &    B_{1,1}  & \ldots &     B_{\k - 1 , 1}  &   B_{\k , 1}  \\
        \OO    &    B_2  & \ldots &   B_{\k - 1, 2}   &  B_{\k , 2}   \\
        \vdots & \vdots & \ddots & \vdots & \vdots \\
        \OO    &  \OO    &    \ldots & B_\k    &   B_{\k,\k}
    \end{pmatrix}
\end{equation}
where every block $B_j$ is a $m_{j} \times m_{j-1}$ matrix of rank $m_j$ with $j = 1, 2, \ldots, \k$. 
Moreover, the $m_j$s are positive integers such that 
		\begin{equation}
  			  \label{m-cond}
   			 m_0 \ge m_1 \ge \ldots \ge m_\k \ge 1, \quad
  			  {\rm and} \quad m_0 + m_1 + \ldots + m_\k = N
		\end{equation}
and the entries of the blocks denoted by $*$ are arbitrary. 
\end{enumerate}
When the above conditions are satisfied, then $\L$ is hypoelliptic, its fundamental solution $\Gamma$ defined in \eqref{fun-sol} and 
\eqref{fun-sol2}, is the density of the solution $(X_{t})_{t \ge 0}$ to \eqref{stoc}, and the problem \eqref{opti-pbm} is controllable.
\end{proposition}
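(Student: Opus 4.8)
The plan is to prove the five conditions equivalent in three blocks --- the linear-algebraic chain $\mathbf{C2}\Leftrightarrow\mathbf{C3}\Leftrightarrow\mathbf{C4}$, the normal form $\mathbf{C4}\Leftrightarrow\mathbf{C5}$, and the Lie-algebraic equivalence $\mathbf{C1}\Leftrightarrow\mathbf{C4}$ --- and then to read off the three concluding assertions. For the first block the pivotal identity is that, for every $\xi\in\rN$ and $t>0$,
\[
   \langle C(t)\,\xi,\xi\rangle=\int_{0}^{t}\bigl|A^{1/2}E(s)^{T}\xi\bigr|^{2}\,ds .
\]
Since the integrand is non-negative and $s\mapsto E(s)=\exp(-sB)$ is real-analytic, $C(t_{0})$ fails to be strictly positive for some $t_{0}>0$ precisely when, for some $\xi\neq0$, one has $A\,E(s)^{T}\xi=0$ for all $s\in\R$; expanding in powers of $s$ at $s=0$ and invoking the Cayley--Hamilton theorem, this is equivalent to the non-triviality of $\bigcap_{k=0}^{N-1}\ker\!\bigl(A\,(B^{T})^{k}\bigr)$. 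Transposing and recalling from \eqref{a-def} that $\mathrm{Range}(A)=\mathrm{Range}(\s)$, so that $\sum_{k\ge0}B^{k}\,\mathrm{Range}(A)=\mathrm{Range}\bigl(\s,B\s,\dots,B^{N-1}\s\bigr)$, this is exactly the negation of Kalman's rank condition $\mathbf{C4}$; moreover the obstructing subspace is the largest subspace of $\ker(A)$ invariant for $B$, so that its triviality is precisely $\mathbf{C2}$.

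For $\mathbf{C4}\Leftrightarrow\mathbf{C5}$, assuming $\mathbf{C4}$ I would construct the Kalman controllability filtration $G_{0}=\mathrm{Range}(A)$, $G_{j}=G_{j-1}+B\,G_{j-1}=\sum_{k=0}^{j}B^{k}\,\mathrm{Range}(A)$, which is strictly increasing until it equals $\rN$ at some minimal index $\k\le N-1$. Using $\rN=\mathrm{Range}(A)\oplus\ker(A)$ (valid because $A$ is symmetric) one checks that $G_{j}=G_{j-1}\oplus\bigl(G_{j}\cap\ker(A)\bigr)$ for $j\ge1$; a basis of $\rN$ whose first $m_{0}:=\mathrm{rank}\,A$ vectors span $G_{0}$ and whose successive blocks span the subspaces $G_{j}\cap\ker(A)$ then brings $A$ to the form \eqref{A} and $B$ to the form \eqref{B}, with $m_{j}=\dim\bigl(G_{j}/G_{j-1}\bigr)$. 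The inclusion $B\,G_{j}\subseteq G_{j+1}$ forces the blocks strictly below the subdiagonal to vanish, while the surjectivity of the induced maps $G_{j-1}/G_{j-2}\to G_{j}/G_{j-1}$ given by $B$ yields at once the full row rank of each $B_{j}$ and the monotonicity $m_{0}\ge m_{1}\ge\dots\ge m_{\k}\ge1$ of \eqref{m-cond}. The converse $\mathbf{C5}\Rightarrow\mathbf{C4}$ is a direct computation of $\mathrm{rank}(\s,B\s,\dots,B^{N-1}\s)$ from the block structure.

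For $\mathbf{C1}\Leftrightarrow\mathbf{C4}$ I would compute brackets of the fields in \eqref{e-fields}: one finds that $[X_{k},Y]=\tfrac1{\sqrt2}\langle B\s_{k},D_{x}\rangle$, with $\s_{k}$ the $k$-th column of $\s$ (the second-order terms cancel, and $\p_{t}$ commutes with the constant-coefficient $X_{k}$), so that iterating the bracket with $Y$ shows $\mathrm{Lie}(X_{1},\dots,X_{m},Y)$ to be spanned by $Y$ together with the constant-coefficient fields $\langle B^{\ell}\s_{k},D_{x}\rangle$, $\ell\ge0$. As $Y$ is the only generator carrying a $\p_{t}$ component, the rank in \eqref{rc1} is $N+1$ at every point if and only if $\{B^{\ell}\s_{k}\}$ spans $\rN$, which is $\mathbf{C4}$. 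The three closing statements then follow: $\L$ is hypoelliptic by $\mathbf{C1}$ and H\"ormander's theorem recalled in Section~\ref{sec1}; by $\mathbf{C3}$ the matrix $C(t)$ is invertible for $t>0$, so \eqref{fun-sol}--\eqref{fun-sol2} is a genuine Gaussian, and solving \eqref{stoc} as $X_{t}=E(t-t_{0})x_{0}+\int_{t_{0}}^{t}E(t-s)\,\s\,dW_{s}$ --- Gaussian with mean $E(t-t_{0})x_{0}$ and covariance $2\,C(t-t_{0})$ --- identifies $\G$, up to the factor $\exp\!\bigl((t-t_{0})\,\tr B\bigr)$ relating $\L$ and $\mathcal{K}_{f}$, with the transition density of $(X_{t})_{t\ge0}$; and $\mathbf{C4}$ is exactly Kalman's controllability rank condition for the linear control system $\dot x=-Bx+\s\,v$ underlying \eqref{opti-pbm}.

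The hard part will be the implication $\mathbf{C4}\Rightarrow\mathbf{C5}$: turning the bare rank condition into the simultaneous block normal form of $A$ and $B$ requires a careful inductive choice of the complementary bases of the quotients $G_{j}/G_{j-1}$ inside $\ker(A)$ so that every $B_{j}$ acquires full row rank and the dimensions $m_{j}$ come out non-increasing. The remaining implications are comparatively routine linear algebra together with standard commutator bookkeeping.
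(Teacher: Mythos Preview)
Your sketch is sound, but you should be aware that the paper does not actually prove this proposition: being a survey, it simply records in the paragraph following the statement that $\mathbf{C1}\Leftrightarrow\mathbf{C2}$ is due to H\"ormander~\cite{H}, that the equivalences among $\mathbf{C1},\mathbf{C2},\mathbf{C3},\mathbf{C5}$ are Propositions~A.1 and~2.1 of Lanconelli--Polidoro~\cite{LP}, and that $\mathbf{C3}\Leftrightarrow\mathbf{C4}$ was first pointed out by Lunardi~\cite{LU}. There is thus no in-paper proof to compare against. Your three blocks --- the quadratic-form identity for $C(t)$ combined with analyticity and Cayley--Hamilton, the Kalman controllability filtration $G_{j}=\sum_{k\le j}B^{k}\,\mathrm{Range}(A)$ to manufacture the block normal form, and the iterated-bracket computation for $\mathbf{C1}$ --- are precisely the arguments one finds by following those references, so you have in effect reconstructed what the survey outsources rather than devised an alternative route. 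Your assessment that $\mathbf{C4}\Rightarrow\mathbf{C5}$ is the only genuinely laborious step is also accurate.

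One bookkeeping caution. The paper writes the drift both as $\langle Bx,D\rangle$ and as $\sum_{i,j}b_{ij}x_{i}\partial_{x_{j}}$, which do not literally coincide under the standard matrix convention; with the index form of $Y$ one computes $[X_{k},Y]=\tfrac{1}{\sqrt2}\sum_{j}(\sigma^{T}B)_{kj}\partial_{x_{j}}$, i.e.\ the direction $B^{T}\sigma_{k}$ rather than $B\sigma_{k}$. Correspondingly, the obstructing set $\bigcap_{k}\ker\!\bigl(A(B^{T})^{k}\bigr)$ in your $\mathbf{C2}$--$\mathbf{C3}$ argument is the largest $B^{T}$-invariant (not $B$-invariant) subspace of $\ker(A)$; its orthogonal complement is the smallest $B$-invariant subspace containing $\mathrm{Range}(A)$, whose properness is the negation of $\mathbf{C4}$. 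The chain closes under either convention, but since the paper is not internally consistent here you should fix one reading of $B$ at the outset and carry the transpose through uniformly.
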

The equivalence between {\rm \bf C1} and {\rm \bf C2} is proved by H\"ormander in \cite{H}. 
The equivalence between {\rm \bf C1}, {\rm \bf C2}, {\rm \bf C3}  and {\rm \bf C5} can be found in \cite{LP} (see Proposition A.1, 
and Proposition 2.1). The equivalence between {\rm \bf C3} and {\rm \bf C4} was first pointed out by Lunardi in \cite{LU}.
		
%

\begin{remark} \label{rem-1}
	The condition {\rm \bf C4} arises in control theory and it is related to the following controllability problem. For $x_0, x_1 \in \R^N$ and $t_0, t_1 \in \R$ with $t_0 < t_1$, find a ``control'' $\o \in L^1([t_0, t_1], \R^{m})$ such that
	\begin{equation} \label{opti-pbm}
		\begin{cases}
			\dot{x}(t) = - B x(t) + \s \o(t) ,\\
			x(t_{0}) = x_{0}, \quad x(t_{1}) = x_{1},
		\end{cases}
	\end{equation}
	where $\s$, $B$ are the same matrices appearing in \eqref{stoc}. It is known that a solution to the above control problem
	exists if, and only if, Kalmann's rank condition holds true (see \cite{ZAB}). 
\end{remark}
\begin{remark} \label{rem-2}
	We discuss the meaning of the matrix $C(t)$. 
	\begin{itemize}
	\item From the SDEs point of view, $2 C(t)$ is the covariance matrix of the solution $(X_{t})_{t\ge 0}$ to the SDE \eqref{stoc}. In general, $(X_{t})_{t\ge 0}$ is a Gaussian process and its density $p$ is defined on $\R^{N}$ when its covariance matrix is positive definite. If this is not the case, the trajectories of $(X_{t})_{t\ge 0}$ belong to a proper subspace of $\R^{N}$. 
	\item The matrix $C(t)$ has a meaning also for the optimal control point of view. Indeed, it is known that 
	\begin{equation*}
		\langle C(t-t_{0})^{-1} \left(x - E(t-t_{0})x_{0}\right) , 
		x - E(t-t_{0})x_{0} \rangle = \inf \int_{t_{0}}^{t} \abs{\w(s)}^{2} \, ds,
	\end{equation*} 
	where the infimum is taken in the set of all controls for \eqref{opti-pbm} (see \cite{LeeMarkus},
	Theorem 3, p. 180). In particular, when $(x_{0}, t_{0}) = (0,0)$ the optimal 
	cost is $\langle C(t)^{-1} x , x \rangle$, a quantity that appears in the expression for the fundamental solution 
	$\G$ in \eqref{fun-sol2}. As we will see in the sequel, this fact will be used to prove asymptotic bounds for positive solutions to Kolmogorov equations (see \eqref{upper-bound} in Theorem \ref{regularity-fun}).
	\end{itemize}
In view of the above assertions, the equivalence of {\rm \bf C3} and {\rm \bf C4} can be interpreted as follows. A control $\o \in L^1([t_0, t_1], \R^{m})$ for the problem \eqref{opti-pbm} exists if, and only if, the trajectories of the Stocastic Process $(X_{t})_{t\ge 0}$ reach every point of $\R^N$. 
\end{remark}

\subsection{Lie Group}
In this Section we focus on the non-Euclidean invariant structure for Kolmogorov operators of the form \eqref{cost-op}. 
This non commutative structure was first used by Garofalo and E. Lanconelli in \cite{GL}, then explicitly written and 
thoroughly studied by E. Lanconelli and Polidoro in \cite{LP}. 
Here and in the sequel we denote by $\lL$, the family of Kolmogorov operators $\L$ satisfying the equivalent conditions of 
Proposition \ref{equivalence}. We also assume the basis of $\R^{N}$ is such that the \emph{constant} matrices $A$ and $B$ have the form \eqref{A} and \eqref{B}, respectively. 

We now define a non commutative algebraic structure on $\R^{N+1}$ introduced in \cite{LP}, that replaces the Euclidean one in the study of Kolmogorov operators. 

\medskip

\noindent 
{\sc Lie group.} \textit{Consider an operator $\L$ in the form \eqref{cost-op} and recall the notation \eqref{c}. Let}
\begin{equation}
    \label{law}
    \GG= ( \R^{N+1}, \circ), \qquad (x,t) \circ (\x, \t) = ( \x + E(\t) x, \hspace{1mm} t + \t). 
\end{equation}
\textit{Then $\GG$ is a group with zero element $(0,0)$, and inverse}
\begin{equation}
    \label{nverse}
    (x, t)^{-1} := (-E(-t) x, \, -t). 
\end{equation}
\textit{For a given $\z \in \R^{N+1}$, we denote by $\ell_{\z}$ the left traslation defined as}
\begin{equation*}
	\ell_{\z}: \R^{N+1} \rightarrow \R^{N+1}, \quad \ell_{\z} (z) = \z \circ z.
\end{equation*}
\textit{Then the operator $\L$ is left invariant with respect to the Lie product $\circ$, that is}
\begin{equation}
	\label{ell}
    \L \circ \ell_{\z} = \ell_{\z} \circ \L \qquad {\rm \textit{or, equivalently,}} 
    \qquad \L\left( u( \z \circ z) \right)  = \left( \L u \right) \left( \z \circ z \right),
\end{equation}
\textit{for every $u$ sufficiently smooth.}

\medskip

We omit the details of the proof of the above statements as they are elementary. We remark that, even though we are interested in hypoelliptic operators $\L$, the definition of the Lie product $\circ$ is well posed wether or not we assume the H\"ormander's condition. 
Also note that 
\begin{equation}
    \label{prodotto}
    (\x,\t)^{-1} \circ (x,t) =  ( x - E(t-\t) \x, t - \t) , \qquad
    (x,t), (\x, \t) \in \R^{N+1},
\end{equation}
then the meaning of \eqref{fun-sol} can be interpreted as follows: 
\begin{equation} \label{fun-sol3}
 \G(x,t;\xi, \tau) = \Gamma \big((\x,\t)^{-1} \circ (x,t)\big).
\end{equation}

Among the class of Kolmogorov operators $\lL$, the invariant operators with respect to a certain family of dilations 
$(D(r))_{r>0}$ play a central role. We say that $\L \in \lL$ is invariant with respect to $(D(r))_{r>0}$ if
\begin{equation}
	\label{Ginv}
      	 \L \left( u \circ D(r) \right) = r^2 D(r) \left( \L u \right), \quad \text{for every} \quad r>0,
\end{equation}
for every function $u$ sufficiently smooth. This property can be read in the expression of the matrix $B$ (see Proposition 2.2 of \cite{LP}).
\begin{proposition}
	\label{prop22}
	Let $\L$ be an operator of the family $\lL$. Then $\L$ satisfies \eqref{Ginv} if, and only if, 
the matrix $B$ as this form 
\begin{equation}
    \label{B0}
    B_{0} = \begin{pmatrix}
        \OO &  \OO & \ldots & \OO & \OO  \\
        B_1 & \OO &  \ldots & \OO & \OO \\
        \OO & B_{2}  & \ldots & \OO & \OO \\
        \vdots & \vdots  & \ddots & \vdots & \vdots  \\
        \OO & \OO  & \ldots & B_{\k} & \OO 
    \end{pmatrix}.
\end{equation}
In this case
	\begin{equation}
		\label{fam-dil}
   		 D(r) = \diag ( r \I_{m_0} , r^3 \I_{m_1}, \ldots, r^{2\k+1} \I_{m_\k}, r^2 ) 
   		 \quad {\rm for \, every} \, \, r > 0,
	\end{equation}
where $\I_{m_j}$ denotes the identity matrix in $\R^{m_j}$. In the sequel we denote by $\lL_{0}$ the family of dilation-inviariant operators belonging to $\lL$. 
\end{proposition}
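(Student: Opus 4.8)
The plan is to prove the equivalence by a direct computation that reduces everything to how the matrix $B$ interacts with the dilation group. First I would observe that the vector fields $X_1,\dots,X_m$ defined in \eqref{e-fields} involve only $\partial_{x_1},\dots,\partial_{x_{m_0}}$, since $A$ has the block form \eqref{A} with nontrivial part $A_0$ acting on the first $m_0$ coordinates; hence the sum $\sum_j X_j^2$ is a second-order operator in the variables $x_1,\dots,x_{m_0}$ with constant coefficients, and it is automatically homogeneous of degree $2$ with respect to \emph{any} dilation of the form $\diag(r\,\I_{m_0},\dots,\ast,r^2)$ that scales the first block of $x$ by $r$ and $t$ by $r^2$. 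So the dilation invariance \eqref{Ginv} of $\L=\sum_j X_j^2+Y$ will hold if and only if the first-order drift part $Y=\langle Bx,Dx\rangle-\partial_t$ is itself homogeneous of degree $2$ under $D(r)$.

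Next I would make the exponent bookkeeping explicit. With $D(r)=\diag(r^{\alpha_0}\I_{m_0},r^{\alpha_1}\I_{m_1},\dots,r^{\alpha_\kappa}\I_{m_\kappa},r^2)$ — where I keep the exponents $\alpha_0,\dots,\alpha_\kappa$ abstract for now, knowing already that homogeneity of $\sum X_j^2$ forces $\alpha_0=1$ — the change of variables $u\mapsto u\circ D(r)$ turns the monomial vector field $x_i\partial_{x_j}$ into $r^{\alpha_{q(i)}-\alpha_{q(j)}}$ times itself, where $q(\cdot)$ records which block a coordinate belongs to, and turns $-\partial_t$ into $r^{-2}(-\partial_t)$. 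Requiring $\L(u\circ D(r))=r^2 D(r)(\L u)$ — equivalently that $Y$ scale homogeneously of degree $2$ like $-\partial_t$ does — forces $\alpha_{q(i)}-\alpha_{q(j)}=2$, i.e.\ $\alpha_{q(j)}=\alpha_{q(i)}-2$, for every pair $(i,j)$ with $b_{ij}\ne 0$, and forces $b_{ij}=0$ whenever this exponent relation fails. Reading this off the block structure \eqref{B} of $B$: the blocks $B_1,\dots,B_\kappa$ are exactly the blocks sending the $(q-1)$-th coordinate group into the $q$-th group, and those are the only ones we are allowed to keep — consistency $\alpha_{q}=\alpha_{q-1}+2$ with $\alpha_0=1$ gives $\alpha_q=2q+1$, which is precisely \eqref{fam-dil}. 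Every other block (the $\ast$ blocks, in particular the entire first block-row $B_{0,0},B_{0,1},\dots$ and the upper-triangular $\ast$ entries) either would need a nonexistent exponent match or would break the degree-$2$ homogeneity, so they must vanish; this is exactly the statement that $B=B_0$ in \eqref{B0}.

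For the converse I would simply run the computation backwards: if $B=B_0$ has the form \eqref{B0}, then every nonzero entry of $B$ lies in a block $B_q$ carrying the $x$ in group $q-1$ to group $q$, so with the exponents $\alpha_q=2q+1$ of \eqref{fam-dil} each such monomial $x_i\partial_{x_j}$ scales by $r^{\alpha_{q-1}-\alpha_q}=r^{-2}$, matching the scaling of $-\partial_t$; hence $Y$ is homogeneous of degree $2$ under $D(r)$, and combined with the automatic degree-$2$ homogeneity of $\sum_j X_j^2$ this yields \eqref{Ginv}. I would also note that $\L\in\lL_0$ still satisfies H\"ormander's condition because deleting the $\ast$ blocks does not touch the subdiagonal blocks $B_1,\dots,B_\kappa$, which are precisely what condition {\rm\bf C5} requires to have full rank $m_j$, so {\rm\bf C5} — hence all the equivalent conditions of Proposition \ref{equivalence} — is preserved.

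The main obstacle is purely organizational rather than conceptual: one has to be careful about the direction of the composition in \eqref{Ginv} (the dilation acts on the \emph{right}, so the pull-back of $x_i\partial_{x_j}$ picks up $r^{\alpha_{q(i)}}/r^{\alpha_{q(j)}}$ and not its reciprocal) and about the fact that a symmetric matrix $A$ of the form \eqref{A} with $m_0\le m$ means $\sum_j X_j^2$ may genuinely involve fewer than $m_0$ independent vector fields — but this never hurts, since whatever second-order constant-coefficient operator in the first block of variables one gets is still homogeneous of degree $2$ under any $D(r)$ with $\alpha_0=1$. Once the exponent matching is set up correctly, the equivalence drops out of matching powers of $r$ block by block, and the identification of the surviving blocks with $B_1,\dots,B_\kappa$ and of the exponents with $2j+1$ is immediate from \eqref{B}. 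I would carry out the exponent computation in one displayed align environment, then state the two implications, citing Proposition 2.2 of \cite{LP} for the reader who wants the fully detailed version.
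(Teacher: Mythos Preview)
The paper does not give its own proof of this proposition; it simply refers the reader to Proposition~2.2 of \cite{LP}. Your plan --- reduce \eqref{Ginv} to homogeneity of $Y$, then match powers of $r$ block by block in the drift $\langle Bx,D\rangle$ --- is exactly the standard argument and is what one finds in \cite{LP}. So there is nothing to compare: your approach \emph{is} the intended one.

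One bookkeeping slip to fix. Under the paper's convention $\sum_{i,j} b_{ij}\,x_i\,\partial_{x_j}=\langle Bx,D\rangle$, a direct computation gives
\[
(x_i\,\partial_{x_j})\bigl(u\circ D(r)\bigr)
= r^{\,\alpha_{q(j)}-\alpha_{q(i)}}\bigl((x_i\,\partial_{x_j})u\bigr)\circ D(r),
\]
so the homogeneity condition is $\alpha_{q(j)}-\alpha_{q(i)}=2$, not $\alpha_{q(i)}-\alpha_{q(j)}=2$ as you wrote. With the block $B_q$ sitting in row--block $q$ and column--block $q-1$ of $B$, the nonzero $b_{ij}$ in that block have $q(j)=q$ and $q(i)=q-1$ (recall $b_{ij}=B_{ji}$), hence $\alpha_q=\alpha_{q-1}+2$ --- which is the conclusion you reach anyway, so the error cancels against your identification of which index is the row and which the column. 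You already flag this sign issue as the main hazard; just redo that one line with the convention made explicit and the argument is clean.
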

It is useful to denote by $\left(D_{0}(r)\right)_{r > 0}$ the family of spatial dilations defined as 
\begin{equation}
	\label{fam-dil-space}
	D_{0}(r) = \text{diag} ( r \I_{m_0} , r^3 \I_{m_1}, \ldots, r^{2\k+1} \I_{m_\k} ) 
   		 \quad {\rm for \, every} \, \, r > 0.
\end{equation}

\medskip

\noindent 
{\sc Homogeneous Lie group.} \textit{If the matrix $B$ has the form \eqref{B0}, we say that the following structure}
\begin{equation}
    \label{hom-group}
    \GG_0= \left( \R^{N+1}, \circ, \left( D(r) \right)_{r>0}\right)
\end{equation}
\textit{is a homogeneous Lie group. In this case, because $D_{0}(r) \, E(t) \, D_{0}(r) \, = \, E(r^{2} t)$ is verified when 
$B$ has the form \eqref{B0}, the following distributive property holds}
\begin{equation}
	\label{compatible}
	D(r)(\z \circ z ) = (D(r) \z ) \circ (D(r) z), \qquad D(r)(z^{-1}) = (D(r)z)^{-1}.
\end{equation}

\medskip

\begin{remark}
	A measurable function $u$ on $\GG_{0}$ will be called homogeneous of degree $\a \in \R$  if
	\begin{equation*}
		u(D_{r}(z)) = r^{\a} u(z) \qquad \text{for every } \, z \in \R^{N+1}.
	\end{equation*}
	A differential operator $X$ will be called homogeneous of degree $\b \in \R$ with respect to $(D_{r})_{r \ge 0}$ if 
	\begin{equation*}
		X u (D_{r}(z)) = r^{\b} \left( X u \right) (D_{r}(z) ) \qquad \text{for every } \, z \in \R^{N+1},
	\end{equation*}
	and for every sufficiently smooth function $u$. Note that, if $u$ is homogeneous of degree $\a$ and $X$ is homogeneous of degree $\b$, then $X u$ is homogeneous of degree $\a - \b$. 
	
	As far as we are concerned with the vector fields of the Kolmogorov operators as defined in \eqref{e-fields}, we
	have that $X_{1}, \ldots, X_{m}$ are homogeneous of degree $1$ and $Y$ is homogeneous of degree $2$
	with respect to $(D_{r})_{r \ge 0}$. In particular, $\L = \sum_{j=1}^m X_j + Y$ is is homogeneous of degree $2$.
\end{remark}

\begin{remark}
 The 
 presence of the exponents $1, 3, \dots, 2 \k + 1$ in the matrix $D$ can be explained as follows. The usual parabolic dilation in the first $m_0$ coordinates of $\R^N$ and in time is due to the fact that $\L$ is non degenerate with respect to $x_1, \dots, x_{m_0}$. The remaining coordinates appear as we check the H\"ormander's condition. 
For instance, consider the Kolmogorov operator
\begin{equation*}
	\L = \p_{x_{1}}^{2} + x_{1} \p_{x_{2}} + x_{2} \p_{x_{3}} - \p_{t} = X_{1}^{2} + Y.
\end{equation*}
To satisfy the H\"ormander condition we need $\k = 2$ commutators $\p_{x_{2}}= [X_1,Y]= X_{1} Y - Y X_{1}$ and $\p_{x_{3}} = [[X_{1}, Y], Y]$. Because $Y$ needs to be considered as a second order derivative, we have that $\p_{x_{2}}$ and $\p_{x_{3}}$ are derivatives of order $3$ and $5$, respectively.
On the other hand, the matrices $A$, $B$ and $D_{0}(r)$ associated to this operator are
\begin{align*}
	A = \begin{pmatrix}
		1 & 0 & 0 \\
		0 & 0 & 0 \\
		0 & 0 & 0 \\
	\end{pmatrix},
	\qquad
	B = \begin{pmatrix}
		0 & 0 & 0 \\
		1 & 0 & 0 \\
		0 & 1 & 0 \\
	\end{pmatrix},
	\qquad
	D_{0}(r) = \begin{pmatrix}
		r & 0 & 0 \\
		0 & r^{3} & 0 \\
		0 & 0 & r^{5} \\
	\end{pmatrix}.
\end{align*}
The same argument can be applied to operators that need $\k>2$ steps to satisfy H\"ormander's rank condition.
\end{remark}

The integer numbers
\begin{equation}
	\label{hom-dim}
	Q := m_{0} + 3m_{1} + \ldots + (2\k+1) m_{k},  \quad \text{and} \quad Q + 2
\end{equation}
will be named \emph{homogeneous dimension of $\R^{N}$ with respect to $(D_0(r))_{r>0}$}, and \emph{homogeneous dimension of $\R^{N+1}$ with respect to $(D(r))_{r >0}$}, because we have that
\begin{equation*}
	\det \, D_0(r) = r^{Q } \quad \text{and} \quad \det \, D(r) = r^{Q + 2} \qquad \text{for every} \ r > 0.
\end{equation*}
We now introduce a homogeneous semi-norm of degree $1$ with respect to the family of dilations $(D(r))_{r>0}$ and a quasi-distance which is invariant with respect to the group operation $\circ$. 
\begin{definition}
	\label{norm-def}
    For every $z=(x,t) \in \R^{N+1}$ we set
    \begin{equation}
    	\label{bnorm}
    	\bnorm{z} = |t|^{\frac12} + \babs{x}, \qquad \babs{x} = \sum_{j=1}^{N} |x_{j}|^{\frac{1}{q_{j}}},
    \end{equation} 
    where the numbers $q_{j}$ are associated to the dilation group $(D(r))_{r>0}$ as follows
    \begin{equation*}
        D(r) = \diag \left( r^{q_1}, \ldots, r^{q_N} , r^{2} 	\right) .
    \end{equation*}
\end{definition}

	The semi-norm $\bnorm{\cdot}$ is homogeneous of degree $1$, that is
	\begin{equation*}
		\bnorm{D(r) z} = r \bnorm{z} \qquad {\rm for \; every} \; r > 0, z \in \R^{N+1}.
	\end{equation*}
    Because every norm is equivalent to any other in $\R^{N+1}$, other definitions have been used in the literature. For instance in \cite{M} it is chosen the following one. For every $z=(x_{1}, \ldots, x_{N}, t) \in \R^{N+1} \setminus \{ 0 \}$ the norm of $z$ is the unique positive solution $r$ to the following equation
    \begin{equation}
    	\label{eqsemi}
        \frac{x_1^{q_1}}{r^{2 q_1}} + \frac{x_2^{q_2}}{r^{2 q_2}} + \ldots +
        \frac{x_N^{q_N}}{r^{2 q_N}} + \frac{t^2}{r^4} = 1.
    \end{equation}
Note that, if we choose \eqref{eqsemi}, the set $\big\{ z \in \R^{N+1} : \|z \| = r \big\}$ is a smooth manifold for every positive $r$, which is note the case for \eqref{bnorm}. 

\medskip

Based on Definition \ref{norm-def}, in the following we introduce a quasi-distance 
$d: \R^{N+1} \times \R^{N+1} \rightarrow [0, + \infty [$ (see Definition \ref{dist-def} below). 
This means that:
	\begin{enumerate}
		\item $d(z,w) = 0$ if and only if $z=w$ for every $z, w \in \R^{N+1}$;
		\item for every compact subset $K$ of $\R^{N+1}$, there exists  a positive constant $C_{K} \ge 1$
such that
			\begin{equation}
				\label{triangle}
\begin{split}
 & d(z,w) \le C_{K} d(w,z); \\ 
 & d(z,w) \le C_{K} \left( d(z, \z) + d(\z, w) \right), \quad \text{for every } \, z, w, \z \in K.
\end{split}
\end{equation}
\end{enumerate} 
The proof of \eqref{triangle} is given in Lemma 2.1 of \cite{PODF}. Definition \ref{dist-def} is given for general non-homogeneous Lie groups. This requires the notion of \emph{principal part operator} discussed in the next section.
We point out that the constant $C_{K}$ doesn't depend on $K$ in the case of homogeneous groups (see Proposition 2.1 in \cite{M}).

\subsection{Principal part operator}
In the last part of this Section we show that the dilation invariant operators are the blow-up limit of the operator belonging to $\lL$. In order to identify the appropriate dilation, we denote by $\L_{0}$ the \textit{principal part operator of $\L$} obtained from \eqref{cost-op} by substituting the matrix $B$ with $B_{0}$ as defined in \eqref{B0}, that is
\begin{equation}
	\label{principal}
	\L_{0} = \div (AD) + \langle B_{0} x, D \rangle - \p_{t}.
\end{equation}
Since $\L_{0}$ is dilation-invariant with respect to $(D(r))_{r>0}$, we define $\L_{r}$ as the \emph{scaled operator} of $\L$ in terms of $(D(r))_{r>0}$ as follows
\begin{equation}
	\label{Lr}
	\L_{r} := r^{2} \, D(r) \, \circ \L \circ D \left( 1/r \right) = \tr(A D^2 ) + \langle B_{r} x , D \rangle - \p_{t},
\end{equation}
where $B_{r}= D(r) \, B D \left( 1/r \right)$ is given by 
\begin{equation}
    \label{Bstar}
    B_r = 
    \begin{pmatrix}
        r^2 B_{0,0} &   r^4 B_{0,1}  & \ldots &    r^{2\k} B_{0, \k - 1}  &  r^{2\k+2} B_{0, \k }  \\  
        B_1   &   r^2 B_{1,1}  & \ldots &    r^{2\k-2} B_{\k - 1 , 1}  &   r^{2\k} B_{\k , 1}  \\
        \OO    &    B_2  & \ldots &  r^{2\k-4} B_{\k - 1, 2}   &  r^{2\k-2} B_{\k , 2}   \\
        \vdots & \vdots & \ddots & \vdots & \vdots \\
        \OO    &  \OO    &    \ldots & B_\k    &   r^2 B_{\k,\k}
    \end{pmatrix}.
\end{equation}
Clearly $\L_{r}=\L$ for every $r>0$ if and only if $B = B_{0}$, and the principal part $\L_0$ of $\L$ is obtained as the limit of \eqref{Lr} as $r \rightarrow 0$. 

\medskip

The invariance structures of the operator $\L$ also reveal themselves in the expression of the fundamental solution $\G$. In particular, as noticed above, $\G$ is translation invariant, as it satisfies the identity \eqref{fun-sol3}.
As far as we are concerned with the dilation invariance, the fundamental solution $\G_{0}$ of $\L_{0}$ is a homogeneous function of degree $-Q$ with respect to the  dilation $(D(r))_{r>0}$, that is
	\begin{equation} \label{Gamma0-hom}
    		\G_0 (D(r) z) = r^{-Q} \G_0(z) \qquad {\rm for \, every} \quad z \in \R^{N+1} \setminus \{ 0 \}, \, r > 0,
	\end{equation}
where $Q$ is the spatial homogeneous dimension of $\R^{N+1}$ introduced in \eqref{hom-dim}. Moreover, the expression of $\G_0$ writes in terms of $D_{0}(r)$. Indeed, the matrix $C(t)$ defined in \eqref{c} satisfies the following identity
	\begin{equation*}
		C(t) = D_{0}(\sqrt{t}) \, C(1) \, D_{0}(\sqrt{t}) \qquad {\rm for \, every} \, t > 0,
	\end{equation*}
and 
	\begin{equation*}
		\label{gamma-rappr}
		\G_0(x,t) = \frac{C_{N}}{t^{\frac{Q}{2}}} \exp \left( - \tfrac14 \langle C^{-1}(1) \, D_{0} \left( \tfrac{1}{\sqrt{t}} \right) x, D_{0} \left( \tfrac{1}{\sqrt{t}} \right) x \rangle \right),
	\end{equation*}
	where $C_{N}$ is the positive constant 
	\begin{equation*}
		\label{cn}
		C_{N} = (4 \pi)^{-\frac{N}{2}} (\det \, C(1) )^{-\frac12}.
	\end{equation*}
We refer to \cite{LP}, \cite{K3}, \cite{K5} for the proof of the above statements. Eventually, Theorem 3.1 in \cite{LP} provides us with a quantitative comparison between $\G$ and $\G_{0}$. 
\begin{theorem}
	\label{teo31}
	Let $\L$ be an operator of the class $\lL$ and let $\L_{0}$ be its principal part as defined in \eqref{principal}.
	Then for every $K>0$ there exists a positive constant $\varepsilon >0$ such that
	\begin{equation}
		\label{bounds}
		(1-\e) \G_{0} (z) \le \G(z) \le (1+\e) \G_{0}(z)
	\end{equation}
	for every $z \in \R^{N+1}$ such that $\G_{0}(z) \ge K$. Moreover, $\e =\e(K) \rightarrow 0$ as $K \rightarrow + \infty$.
\end{theorem}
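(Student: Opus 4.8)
\medskip
\noindent
The plan is to reduce the comparison between $\G$ and its blow-up $\G_{0}$ to a comparison, for small $r$, between the scaled operator $\L_{r}$ of \eqref{Lr} and its principal part $\L_{0}$, exploiting the explicit Gaussian form \eqref{fun-sol2} of the fundamental solutions. Denote by $C_{r}(t) := \int_{0}^{t} E_{r}(s)\, A\, E_{r}^{T}(s)\, ds$ the covariance matrix associated with $\L_{r}$, where $E_{r}(s) := \ep(-s B_{r})$; thus $C_{0}(t)$ is the covariance matrix of $\L_{0}$. From the block structure \eqref{Bstar} one reads off $B_{r} = r^{2}\, D_{0}(1/r)\, B\, D_{0}(r)$ and $\tr(B_{r}) = r^{2}\, \tr(B)$, while \eqref{A} and \eqref{fam-dil-space} give $D_{0}(r)\, A\, D_{0}(r) = r^{2} A$. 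Combining these with a change of variable in the defining integral produces the covariant scaling
\[
    E_{r}(s) = D_{0}(1/r)\, E(r^{2} s)\, D_{0}(r), \qquad C_{r}(t) = D_{0}(1/r)\, C(r^{2} t)\, D_{0}(1/r);
\]
in particular $C(t) = D_{0}(\sqrt{t})\, C_{\sqrt{t}}(1)\, D_{0}(\sqrt{t})$. Inserting this into \eqref{fun-sol2}, recalling the displayed formula for $\G_{0}$ and $C_{N} = (4\pi)^{-N/2}(\det C_{0}(1))^{-1/2}$, and setting $\xi := D_{0}(1/\sqrt{t})\, x$, a direct manipulation gives, for every $z = (x,t)$ with $t > 0$,
\[
    \frac{\G(z)}{\G_{0}(z)} = \sqrt{\frac{\det C_{0}(1)}{\det C_{r}(1)}}\;
    \ep\!\left( - \tfrac14 \big\langle \big( C_{r}^{-1}(1) - C_{0}^{-1}(1) \big)\, \xi,\, \xi \big\rangle - r^{2}\, \tr(B) \right), \qquad r := \sqrt{t}.
\]

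\noindent
Next I would extract the information in the hypothesis. Fix $K > 0$ and let $z = (x,t)$ satisfy $\G_{0}(z) \ge K$; since $\G_{0}$ vanishes for $t \le 0$, necessarily $t > 0$. With $r = \sqrt{t}$ and $\xi$ as above, $\G_{0}(z) = C_{N}\, t^{-Q/2}\, \ep(-\tfrac14 \langle C_{0}^{-1}(1)\xi, \xi\rangle) \ge K$ together with $\ep(\,\cdot\,) \le 1$ forces $t^{Q/2} \le C_{N}/K$, hence $r = \sqrt{t} \le (C_{N}/K)^{1/Q} \to 0$ as $K \to \infty$; taking logarithms in the same inequality yields $\langle C_{0}^{-1}(1)\xi, \xi\rangle \le 4\log(C_{N}/K) + 4Q\log(1/r)$, so that $|\xi|^{2} \le c\,\log(1/r)$ for all large $K$, with $c$ depending only on $A, B$ (through $\lambda_{\min}(C_{0}^{-1}(1))$, positive because $C_{0}(1) > 0$ by {\rm \bf C3}). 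Thus the rescaled point $(\xi, 1)$ stays at time $1$, while its spatial component may run off to infinity — only, crucially, at the slow rate $|\xi| = O(\sqrt{\log(1/r)})$.

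\noindent
It then remains to estimate the three factors of the ratio formula as $r \to 0$. From \eqref{Bstar}, $\| B_{r} - B_{0}\| = O(r^{2})$; by Gronwall's lemma $\| E_{r}(s) - E_{0}(s)\| = O(r^{2})$ uniformly for $s \in [0,1]$, whence $\| C_{r}(1) - C_{0}(1)\| = O(r^{2})$. Since $C_{0}(1) > 0$ — and in fact $C_{r}(1) > 0$ for every $r > 0$, as $\L_{r} \in \lL$ — for small $r$ the matrix $C_{r}(1)$ stays uniformly positive, so $\| C_{r}^{-1}(1) - C_{0}^{-1}(1)\| = O(r^{2})$ and $\det C_{r}(1)/\det C_{0}(1) = 1 + O(r^{2})$. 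Inserting these bounds and $|\xi|^{2} = O(\log(1/r))$, the square-root factor is $1 + O(r^{2})$ and the exponent is $O(r^{2}\log(1/r))$, so that
\[
    \frac{\G(z)}{\G_{0}(z)} = 1 + O\big( r^{2}\log(1/r) \big), \qquad r \le (C_{N}/K)^{1/Q},
\]
with constants depending only on $A, B$. Taking $\e = \e(K) := \sup \{\, |\G(z)/\G_{0}(z) - 1| : \G_{0}(z) \ge K \,\}$ then yields \eqref{bounds}, and $\e(K) = O\big( K^{-2/Q}\log K \big) \to 0$ as $K \to +\infty$.

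\noindent
The main obstacle is precisely the feature noted in the second step: the rescaling $r = \sqrt{t}$ that normalizes the time coordinate does \emph{not} confine the rescaled point to a fixed compact subset of $\{ t > 0 \}$, so one cannot simply invoke a local uniform convergence $\G_{r} \to \G_{0}$. The argument survives only because two competing rates match up: the perturbation $C_{r}^{-1}(1) - C_{0}^{-1}(1)$ is quadratically small in $r$, while the constraint $\G_{0}(z) \ge K$ allows $|\xi|^{2}$ to grow merely logarithmically in $1/r$, so that $r^{2}|\xi|^{2} \to 0$. Pinning these two rates down — the $O(r^{2})$ perturbation estimate (Gronwall, using $C_{0}(1) > 0$) and the logarithmic bound on $|\xi|$ extracted from the explicit form of $\G_{0}$ — is where the real content of the argument lies.
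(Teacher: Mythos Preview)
The paper itself does not supply a proof of this statement; it is quoted as Theorem~3.1 of \cite{LP}, so there is no in-paper argument to compare against. That said, your approach is correct and is precisely the natural one suggested by the structure set up in the paper: use the scaling identity $C(t) = D_{0}(\sqrt{t})\, C_{\sqrt{t}}(1)\, D_{0}(\sqrt{t})$ to reduce the ratio $\G/\G_{0}$ to a comparison of $C_{r}(1)$ with $C_{0}(1)$ at unit time, estimate $\|C_{r}(1)-C_{0}(1)\| = O(r^{2})$ from the block form \eqref{Bstar}, and then exploit the constraint $\G_{0}(z)\ge K$ to bound $|\xi|^{2}$ by $c\log(1/r)$, so that the quadratic-in-$r$ perturbation beats the logarithmic growth of $|\xi|^{2}$. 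Your identification of this rate competition as the crux of the argument is exactly right, and the explicit rate $\e(K) = O(K^{-2/Q}\log K)$ is a bonus not stated in the survey.

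One small gap: you restrict to ``large $K$'' when deriving $|\xi|^{2}\le c\log(1/r)$, whereas the theorem asserts existence of an $\e$ for \emph{every} $K>0$. For small $K$ the set $\{\G_{0}\ge K\}$ may contain points with $r=\sqrt{t}$ bounded away from zero, where your $O(r^{2})$ perturbation estimates are no longer small. This is easily patched: for $r$ in any compact interval $[\rho, r_{K}]$ the map $r\mapsto C_{r}(1)$ is continuous with values in the positive cone (as you note, $\L_{r}\in\lL$ for every $r>0$), and the constraint $\G_{0}(z)\ge K$ still confines $\xi$ to a bounded set, so $\G/\G_{0}$ is uniformly bounded above and below there; combined with your small-$r$ analysis this covers all $K>0$. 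This addendum does not affect the asymptotic statement $\e(K)\to 0$, which is the real content and which you have established cleanly.
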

Note that the above result doesn't hold true in the set $\big\{\G_{0} < K\big\}$ (see formula (1.30) in \cite{LP}).

\medskip

We now introduce the quasi-distance $d$ for a generic Lie group $\GG$. In the following definition
``$\circ$'' denotes the traslation of $\L$, and the norm $\bnorm{\cdot}$ is the one associated to $\L_{0}$. 
\begin{definition}
	\label{dist-def}
	For every $z, w \in \R^{N+1}$, we define a quasi-distance
	 $d(z,w)$ invariant with respect to the translation
	group $\GG_{0}$ as follows
  	   \begin{equation}
   	 	\label{quasi-distance}
    		d(z, w) = \bnorm{ z^{-1} \circ w },
          \end{equation}
	and we denote by $B_r(z)$ the $d-$ball of center $z$ and radius $r$. 
\end{definition}

\begin{definition}
    \label{holdercontinuous}
    Let $\a$ be a positive constant, $\a \le 1$, and let $\O$ be an open subset of $\R^{N+1}$. We 
    say a function $f : \O \longrightarrow \R$ is H\"older continuous with exponent $\a$ in $\O$
    with respect to the groups $\GG=(\R^{N+1}, \circ)$ and $(D(r))_{r>0}$ (in short: H\"older 
    continuous with exponent $\a$, $f \in C^\a (\O)$) if there exists a positive constant $k>0$ such that 
    \begin{equation*}
        | f(z) - f(\z) | \le k \; d(z,\zeta)^{\a} \qquad { \rm for \, every \, } z, \z \in \O.
    \end{equation*}
   To every bounded function $f \in C^\a (\O)$ we associate the norm
    	\begin{equation*}
       		 |f|_{\a, \O} \hspace{1mm} = \hspace{1mm} \sup \limits_\O |f| \hspace{1mm} + \hspace{1mm} 
       		 \sup \limits_{z, \z \in \O \atop  z \ne \z} \frac{|f(z) - f(\z)|}{d(z,\zeta)^{\a}}.
        \end{equation*}
    Moreover, we say a function $f$ is locally H\"older continuous, and we write $f \in C^{\a}_{\loc}(\O)$,
    if $f \in C^{\a}(\O')$ for every compact subset $\O'$ of $\O$.
\end{definition}

\begin{remark}
   Let $\O$ be a bounded subset of $\R^{N+1}$.
   If $f$ is a H\"older continuous function of exponent $\a$ in the usual Euclidean sense, 
    then $f$ is H\"older continuous of exponent $\a$. Vice versa, if $f \in C^\a (\O)$ then $f$
    is a $\b-$H\"older continuous in the Euclidean sense, where $\b = \tfrac{\a}{2\k+1}$
    and $\k$ is the constant appearing in \eqref{B}.
\end{remark}

\section{Kolmogorov operator with H\"older continuous coefficients}
\label{sec3}
In this section we consider Kolmogorov operator in non-divergence form in $\R^{N+1}$
\begin{equation}
    \label{mmL}
    \L = \sum \limits_{i,j=1}^{m_0} a_{ij}(x,t) \p^2_{x_ix_j} \, + \, 
    \sum \limits_{j=1}^{m_0} b_{j}(x,t) \p_{x_j} \, + \,
    \langle B x, D \rangle - \, \p_t, \qquad 
    {\rm for} \, (x,t) \in \R^{N+1}
\end{equation}
with continuous coeficients $a_{ij}$'s and $b_{j}$'s. As in the parabolic case, the classical theory for degenerate Kolmogorov operators is developed for spaces of H\"older continuous functions introduced in Definition \ref{holdercontinuous}. We remark that this definition relies on the Lie group $\GG$ \eqref{law}, that is an invariant structure for the \emph{constant coefficients} operators. Even though the \emph{non-constant coefficients} operators in \eqref{mmL} are not invariant with respect to $\GG$, we will rely on the Lie group invariance of the model operator 
\begin{equation}
    \label{mmLap}
    \Delta_{m_0} + Y = \sum \limits_{j=1}^{m_0}  \p^2_{x_j} \, + \, \langle B x, D \rangle - \, \p_t, 
\end{equation}
associated to $\L$. Indeed, this is a standard procedure in the study of uniformly parabolic operators. 
We next list the standing assumptions of this section:
\begin{itemize}
   \item[\textbf{(H1)}] $B= (b_{i,j})$ is a $N \times N$ real constant matrix of the type \eqref{B}, with blocks $B_{j}$ of rank $m_{j}$ 
   	and $*-$blocks arbitrary;
   \item[\textbf{(H2)}] $A=(a_{ij}(z))_{i,j=1, \ldots, m_0}$ is a symmetric matrix of the form \eqref{A}, i.e. $a_{ij}(z)= a_{j,i}(z)$ for $i,j=1, \ldots, 
   	m_{0}$, with $1 \le m_{0} \le N$. Moreover, it is positive definite in $\R^{m_0}$ and there exist a positive constant $\l$ such that 
	\begin{equation*}
  	  \frac{1}{\l} \sum \limits_{i=1}^{m_0} \abs{\x_i}^2 \le \sum \limits_{i,j=1}^{m_0} a_{ij}(z) \x_i \x_j 
  	  \le \l \sum \limits_{i=1}^{m_0} \abs{\x_i}^2 	
	\end{equation*}
	for every $(\x_1, \ldots, \x_{m_0}) \in \R^{m_0}$ and $z \in \R^{N+1}$;
	\item[\textbf{(H3)}] there exist $0 < \a \le 1$ and $M > 0$ such that 
	\begin{equation*}
  	     \abs{ a_{ij}(z) - a_{ij}(\z)} \le M \, d(z,\zeta)^{\a}, \qquad 
  	     \abs{ b_{j}(z) - b_{j}(\z)} \le M \, d(z,\zeta)^{\a},
	\end{equation*}
for every $z, \z \in \R^{N+1}$ and for every $i,j = 1, \ldots, m_{0}$,
\end{itemize}
Note that, if $m_0 = N$, the operator $\L$ is uniformly parabolic and $B = \OO$. In particular the model operator \eqref{mmLap} is the heat equation and we have that $d\big( (\x,\t), (x,t)\big) = |\x-x|+|\t-t|^{1/2}$, so that we are considering the parabolic modulus of continuity. 

In the sequel we refer to the Assumption \textbf{(H3)} by saying that the coefficients $a_{ij}$'s and $b_{j}$'s belong to the space $C^\alpha$ introduced in Definition \ref{holdercontinuous}. We next give the definion of classic solution to the equation $\L u = f$ under minimal regularity assumptions on $u$. A function $u$ is Lie differentiable with respect to the vector field $Y$ defined in \eqref{e-fields} at the point $z=(x,t)$ if there exists and is finite 
\begin{equation} \label{lie-diff}
		Yu(z) := \lim \limits_{s \rightarrow 0} \frac{u(\g(s)) - u(\g(0))}{s}, \qquad \g(s) = (E(-s) x, t - s ).
\end{equation}
Note that $\g$ is the integral curve of $Y$ from $z$. Clearly, if $u \in C^{1}(\O)$, with $\O$ open subset of $\R^{N+1}$, then $Y u (x,t)$ agrees with $\langle B x, D u(x,t) \rangle - \p_{t} u (x,t)$ considered as a linear combination of the derivatives of $u$. 
\begin{definition}
	\label{solution}
	A function $u$ is a solution to the equation $\L u = f$ in a domain $\O$ of $\R^{N+1}$
	if there exists the Euclidean derivatives $\p_{x_{i}} u, \p_{x_{i},x_{j}} u \in C(\O)$
	for $i,j = 1, \ldots, m_{0}$, the Lie derivative $Yu \in C(\O)$, and the equation
	\begin{equation*}
	       \sum \limits_{i,j=1}^{m_0} a_{ij}(z) \p^2_{x_ix_j} u(z)
	       + \sum \limits_{j=1}^{m_0} b_{j}(z) \p_{x_j} u(z) 
	       \, + Y u(z) = f(z)
	\end{equation*} 
	is satisfied at any point $z=(x,t) \in \O$.
\end{definition}

The natural functional setting for the study of classical solutions is the space 
 \begin{equation}
 	\label{c2alfa}
	C^{2,\a} (\O) = \left\{ u \in C^{\a} (\O) \; \mid \; \p_{x_{i}}u , \p^2_{x_{i} x_{j}} u, Y u \in C^{\a} (\O), \quad 
	\text{for } i,j = 1, \ldots, m_{0}  \right\},
   \end{equation}
where $C^{\a} (\O)$ is given in Definition \ref{holdercontinuous}. Moreover, if $u \in C^{2,\a} (\O)$ then we define the norm
\begin{equation}
 	\label{c2alfa-norma}
	| u |_{2 + \a, \O } := | u |_{\a, \O } \; + \; \sum \limits_{i=1}^{m_{0}} | \p_{x_{i}}u |_{\a, \O } \; + \;  \sum \limits_{i,j=1}^{m_{0}} |\p^2_{x_{i} x_{j}}u |_{\a, \O }  \; + \;  |Y u |_{\a, \O }.
   \end{equation}
Clearly, the definition of $C^{2,\a}_{\loc} (\O)$ follows straightforwardly from the definition of $C^{\a}_{\loc} (\O)$. 
A definition of the space $C^{k,\a} (\O)$ for every positive integer $k$ is given and discussed in the work \cite{PPP} by Pagliarani, Pascucci and Pignotti, where a proof of the Taylor expansion for $C^{k,\a} (\O)$ functions is given. It is worth noting that the authors of \cite{PPP} require weaker regularity assumptions for the definition of the space $C^{2,\a}$ than the ones considered here in \eqref{c2alfa}.

As in the uniformly elliptic and parabolic case, fundamental results in the classical regularity theory are the Schauder estimates.
We recall that Schauder estimates for the dilation invariant Kolmogorov operator (i.e. where the matrix $B=B_{0}$) with H\"older continuous 
coefficients were proved by M. Manfredini in \cite{M} (see Theorem 1.4). 
Manfredini result was then extended by Di Francesco and Polidoro in \cite{PODF} to the non-dilation invariant
case.
\begin{theorem}
    \label{schauder1}
    Let us consider an operator $\L$ of the type \eqref{mmL} satisfying assumptions \textbf{(H1)}, \textbf{(H2)}, \textbf{(H3)}
    with $\a < 1$. 
    Let $\O$ be an open subset of $\R^{N+1}$, $f \in C^{\a}_{\loc}(\O)$ and let $u$ be a classical solution to $\L u = f$ in $\O$. 
    Then for every $\O^{'} \subset \subset \O^{''} \subset \subset \O$ there exists a positive constant $C$ such that
    \begin{equation*}
        \label{scauder2}
        | u |_{2 + \a, \O^{'} } \le C \Big( \sup \nolimits_{\O^{''}} |u| \; + \; |f|_{\a, \O^{''} } \Big).
    \end{equation*}
\end{theorem}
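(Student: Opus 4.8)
The plan is to follow the classical route for Schauder interior estimates, adapted to the non-Euclidean Kolmogorov setting, combining three ingredients: (i) the freezing-of-coefficients technique, which reduces the variable-coefficient estimate to the constant-coefficient one; (ii) the interior Schauder estimate for the model (constant-coefficient) Kolmogorov operators, which follows from potential estimates on the fundamental solution $\Gamma$ described in Section~\ref{sec2}; and (iii) an interpolation argument together with a covering/iteration over nested $d$-balls. Throughout one uses that the homogeneity of the group $\GG$ (or rather of its principal part $\GG_0$) makes the H\"older seminorms behave exactly as in the parabolic case under the dilations $(D(r))_{r>0}$.

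\textbf{Key steps.}
First I would establish the \emph{model estimate}: for the operator $\Delta_{m_0} + Y$ (or, more generally, for a constant-coefficient operator with frozen $A_0$), if $\mathcal{L}_0 w = g$ with $g \in C^\alpha$ on a $d$-ball, then $\abs{w}_{2+\alpha}$ on a smaller ball is controlled by $\sup\abs{w}$ plus $\abs{g}_\alpha$. This is precisely the content of Manfredini's theorem \cite{M} in the dilation-invariant case and of Di Francesco--Polidoro \cite{PODF} in general; it rests on representing $w$ through the fundamental solution $\Gamma$ of \eqref{fun-sol2}, differentiating under the integral sign, and using that $X_i X_j \Gamma$ and $Y\Gamma$ are kernels homogeneous of degree $-(Q+2)$ with respect to $(D(r))_{r>0}$, hence of Calder\'on--Zygmund type on the homogeneous group, so that they map $C^\alpha$ to $C^\alpha$. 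Second, I would \emph{freeze the coefficients} at a point $z_0$: writing $\mathcal{L} = \mathcal{L}_{z_0} + (\mathcal{L} - \mathcal{L}_{z_0})$ where $\mathcal{L}_{z_0}$ has the constant matrix $A(z_0)$ and constant drift $b(z_0)$, one has $\mathcal{L}_{z_0} u = f - (\mathcal{L} - \mathcal{L}_{z_0})u =: \tilde f$. By \textbf{(H3)} the perturbation term has a small coefficient, of size $O(r^\alpha)$ on a ball of radius $r$, multiplying second derivatives of $u$; so $\abs{\tilde f}_\alpha$ on a ball of radius $r$ is bounded by $\abs{f}_\alpha + C r^\alpha \abs{u}_{2+\alpha}$ plus lower-order terms. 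Plugging into the model estimate and choosing $r$ small absorbs the $\abs{u}_{2+\alpha}$ term. Third, I would run the standard \emph{interpolation + iteration}: cover $\O'$ by finitely many small $d$-balls, use the interpolation inequality $\abs{u}_{1+\alpha} \le \eps \abs{u}_{2+\alpha} + C_\eps \sup\abs{u}$ (valid in these spaces, cf. \cite{M, PPP}) to mop up the intermediate-order norms, and patch the local bounds together with a standard iteration over a sequence of nested domains $\O' \subset\subset \cdots \subset\subset \O''$ to remove the dependence of constants on the distance to the boundary.

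\textbf{Main obstacle.}
The genuinely delicate point is the model estimate for the \emph{non-dilation-invariant} constant-coefficient operator: when $B \neq B_0$, the operator $\mathcal{L}$ is not homogeneous under $(D(r))_{r>0}$, so one cannot directly invoke scaling. The resolution, due to Di Francesco--Polidoro, is to treat $B - B_0$ as a lower-order perturbation of the principal part $\mathcal{L}_0$ --- which \emph{is} homogeneous --- using the comparison \eqref{bounds} between $\Gamma$ and $\Gamma_0$ in Theorem~\ref{teo31} and the fact that the extra drift terms $\langle (B-B_0)x, Du\rangle$ are, in the relevant scaling, of lower weight; a careful bookkeeping of the inhomogeneous dilations \eqref{Bstar} shows the perturbation is controlled on bounded sets. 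A secondary technical nuisance is that the $d$-balls are not as regular as Euclidean balls (the level sets of $\bnorm{\cdot}$ need not be smooth manifolds, as noted after \eqref{eqsemi}), so one must be slightly careful with the cutoff functions and the covering argument, but this is routine given the quasi-distance properties \eqref{triangle}.
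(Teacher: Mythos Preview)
The paper does not prove Theorem~\ref{schauder1}: this is a survey article, and the theorem is simply recalled from the literature, with attribution to Manfredini \cite{M} for the dilation-invariant case $B=B_0$ and to Di~Francesco--Polidoro \cite{PODF} for the general case. Your outline is a faithful sketch of the argument actually carried out in those references --- freezing coefficients, potential estimates for the constant-coefficient model via the homogeneous-kernel structure of $\Gamma_0$, treating $B-B_0$ as a lower-order perturbation through the scaling \eqref{Bstar}, interpolation, and covering --- so there is nothing to correct, but also nothing in the paper itself to compare against.
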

A more precise estimate taking into account the distance between the point and the boundary of the set $\O$ can be found in \cite{M} (see Theorem 1.4) for the dilation invariant case. We omit here this precise statement because it requires the introduction of further notation. 
We also recall that analogous Schauder estimates have been proved by several authors in the framework of semigroup theory, where they consider solutions which are not classical in the sense of Definition \ref{solution}. Among others, we refer to Lunardi \cite{LU}, Lorenzi \cite{LO}, Priola \cite{PR}, Delarue and Menozzi \cite{DM}.

\subsection{Fundamental Solution and Cauchy Problem}
The existence of a fundamental solution $\G$ for the operator $\L$ satisfying the assumptions \textbf{(H1)}, \textbf{(H2)} and \textbf{(H3)} has been proved using the Levi's parametrix method. The first results of this type are due to M. Weber \cite{Weber}, to Il'In \cite{IlIn} and to Sonin \cite{Sonin} who assumed an Euclidean regularity on the coeficients $a_{ij}$'s and $b_{j}$'s. Later on, Polidoro applied in \cite{P2} the Levi parametrix method for the \emph{dilation inviariant} operator $\L$ (i.e. under the additional assumption that $B$ has the form \eqref{B0}), then Di Francesco and Pascucci removed this last assumption in \cite{PDF}. 

The Levi's parametrix method is a constructive argument to prove existence and bounds of the fundamental solution. For every $\z \in \R^{N+1}$, the parametrix $Z (\, \cdot \, , \z)$ is the fundamental solution, with pole at $\z$, of the following operator
\begin{equation} \label{ellez}
	\L_{\z} = \sum \limits_{i,j=1}^{m_{0}} a_{ij}(\z) \, \p^{2}_{x_{i} x_{j}} \, + \, \langle B x, D \rangle \, - \, \p_{t}.
\end{equation}
The method is based on the fact that, if the coeficients $a_{ij}$'s are continuous and the coefficiens $b_{j}$'s are bounded, then $Z$ is a good approximation of the fundamental solution of $\L$, because 
\begin{equation*}
	\L Z(z,\z) = \sum \limits_{i,j=1}^{m_{0}} \left( a_{ij}(z) - a_{ij}(\z) \right) \, \p^2_{x_{i} x_{j}} Z (z,\z) + 
	\sum \limits_{j=1}^{m_{0}} b_{j}(z) \, \p_{x_{j}} Z (z,\z),
\end{equation*}
at least as $z$ is close to the pole $\z$. We look for the fundamental solution $\Gamma$ as a solution of the following Volterra equation
\begin{equation} \label{parametrix}
	\G (x,t,\x,\t) = Z(x,t,\x,\t) + \int_{\t}^t \int_{\R^N} Z(x,t, y,s) G(y,s,\x,\t) dy \, ds,
\end{equation} 
where the unknown function $G$ is obtained by a fixed point argument. It turns out that
\begin{equation} \label{parametrix-G}
	G(z,\z) = \sum_{k=1}^{+ \infty} (\L Z)_k (z,\z),
\end{equation}
where $(\L Z)_1 (z,\z) = \L Z (z,\z)$ and, for every $k \in \N$,  
\begin{equation*}
	(\L Z)_{k+1} (x,t,\x,\t) = \int_{\t}^t \int_{\R^N} \L Z(x,t, y,s) (\L Z)_k (y,s,\x,\t) dy \, ds.
\end{equation*}
Let's point out that $Z$ is explicitly known by formulas \eqref{fun-sol} and \eqref{fun-sol2}, then the equations \eqref{parametrix} and \eqref{parametrix-G} give explicit bounds for $\G$ and for its derivatives (see equations \eqref{upper-bound} and \eqref{stime} below). 
We summarize here the main results of the articles \cite{P2} and \cite{PDF} on the existence and bounds for the fundamental solution. 
%
\begin{theorem}
	\label{regularity-fun}
	Let $\L$ be an operator of the form \eqref{mmL} under the assumptions \textbf{(H1)}, \textbf{(H2)}, \textbf{(H3)}. Then there exists a fundamental solution $\G(\cdot, \z)$ to $\L$ with pole at $\z \in \R^{N+1}$ such that: 
	 \begin{enumerate}
	 	\item $\G(\cdot, \z) \in L^{1}_{\loc} (\R^{N+1}) \cap C(\R^{N+1} \setminus
		\{ \z \} )$;
		\item for every $\phi \in C_b(\R^N)$ the function
$$
  u(x,t)=\int_{\R^{N}}\Gamma(x,t;\x,0)\phi(\x)d \x, 
$$
is a classical solution of the Cauchy problem
\begin{equation} \label{cauchyproblem}
\left\{
  \begin{array}{ll}
    \L u = 0, & \hbox{$(x,t) \in \R^{N} \times \R^{+}$} \\
    u(x,0)=\varphi (x) & \hbox{$(x,t) \in \mathbb{R}^N$.}
  \end{array}
\right.
\end{equation}
	\item  For every $(x, t), (\x, \t) \in \R^{N+1}$ such that $\t < t$ we have that
		\begin{equation*}
 			\int \limits_{\R^{N}} \G(x,t,\x, \t) \; d \x = 1;
 		\end{equation*} 
	\item the reproduction property holds for every $(y,s) \in \R^{N+1}$ with $\t < s < t$: 
		\begin{equation*}
			\G(x,t,\x,\t) = \int \limits_{\R^{N}} \G(x,t,y,s) \, \G(y,s,\x,\t) \, dy;
		\end{equation*}
	\item for every positive $T$ and for every $\Lambda > \l$, with $\l$ as in \textbf{\emph{(H1)}},
	 there exists a positive constant $c^{+}= c^{+}(\Lambda, \l,T)$ such that 
		\begin{equation}
	 	\label{upper-bound}
		c^{-} \, \G^{-} (z, \z) \le \G(z, \z) \le c^{+} \, \G^{+} (z, \z) \qquad {\rm for \, every}\, z, \z \in \R^{N+1}, \, 0 < t- \t 
		< T,
	 \end{equation}
for every $(x,t), (\x, \t) \in \R^{N+1}$ with $0 < t- \t < T$. Here, 
 $\G^{+}$ and $\G^{-}$ are, respectively, the fundamental solutions of the following operators:
	 \begin{align*}
	 	\L^{+} = \l \D_{m_{0}} + \langle B x, D \rangle - \p_{t}  \quad {\rm and} \quad
		\L^{-} = \l^{-1} \D_{m_{0}} + \langle B x, D \rangle - \p_{t}.
	 \end{align*}
	\end{enumerate}
\end{theorem}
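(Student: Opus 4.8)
\medskip\noindent\emph{Strategy of the proof.} The plan is to carry out the Levi parametrix construction outlined around \eqref{parametrix}--\eqref{parametrix-G}. The starting point is a family of pointwise bounds for the frozen parametrix $Z(\cdot,\z)$, i.e.\ the fundamental solution of the constant-coefficient operator $\L_\z$ in \eqref{ellez}, and for its derivatives. Since $\L_\z$ belongs to the class $\lL$ --- its matrices $A(\z)$ and $B$ have the forms \eqref{A}, \eqref{B} by \textbf{(H1)}--\textbf{(H2)} --- its fundamental solution is given explicitly by \eqref{fun-sol}--\eqref{fun-sol2} with $A$ replaced by $A(\z)$; using the uniform ellipticity \textbf{(H2)} and the comparison with the principal part in Theorem \ref{teo31}, one obtains, uniformly in $\z$ and for $0<t-\t<T$,
\[
|\p_{x_i}Z(z,\z)|\le \frac{c}{\sqrt{t-\t}}\,\G^{+}(z,\z),\qquad |\p^2_{x_ix_j}Z(z,\z)|\le \frac{c}{t-\t}\,\G^{+}(z,\z),
\]
where $c=c(\l,T)$ and $\G^{+}$ is a Gaussian kernel of the type appearing in the statement (at this stage with some over-large constant). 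Inserting these into the expression for $\L Z$ displayed before \eqref{parametrix}, bounding $a_{ij}(z)-a_{ij}(\z)$ by $M\,d(z,\z)^{\a}$ via \textbf{(H3)} and absorbing the factor $d(z,\z)^{\a}$ into a slightly coarser Gaussian at the price of $(t-\t)^{\a/2}$, and using the boundedness of the $b_j$'s, yields the key estimate $|\L Z(z,\z)|\le c\,(t-\t)^{\a/2-1}\,\G^{+}(z,\z)$.

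The second step is to iterate this bound through the space--time convolution in \eqref{parametrix-G}: one convolves the two Gaussians using the Chapman--Kolmogorov identity for $\G^{+}$ (a genuine reproduction property, since $\L^{+}$ has constant coefficients) and controls the time singularities through the elementary integral $\int_\t^t(t-s)^{a-1}(s-\t)^{b-1}\,ds=c(a,b)(t-\t)^{a+b-1}$, which is finite for $a,b>0$. An induction then gives $|(\L Z)_k(z,\z)|\le C_k\,(t-\t)^{k\a/2-1}\,\G^{+}(z,\z)$, where the $C_k$ are products of Beta-function values, summable against every power $(t-\t)^{k\a/2}$; hence the series \eqref{parametrix-G} converges locally uniformly on $\{t>\t\}$, $G(\cdot,\z)$ is continuous there and $|G(z,\z)|\le c\,(t-\t)^{\a/2-1}\,\G^{+}(z,\z)$. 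Defining $\G$ by \eqref{parametrix}, the term $Z*G$ carries a strictly weaker singularity at the pole than $Z$, so it is continuous on $\R^{N+1}\setminus\{\z\}$ and locally integrable; together with the same properties of $Z$ this proves item 1. Differentiating \eqref{parametrix} in $x_i$, in $x_ix_j$ and along $Y$ --- splitting the time integral near $s=t$ to pass the derivatives inside, in the standard way for parametrix representations, or alternatively invoking the Schauder estimate of Theorem \ref{schauder1} --- shows that $\G(\cdot,\z)$ is a classical solution of $\L\cdot=0$ on $\R^{N+1}\setminus\{\z\}$: the fixed-point relation $\L Z*G=G-\L Z$ together with the boundary term $-G(z,\z)$ produced by $\p_t$ acting on the upper limit of the $s$-integral (recall $Z(x,t;y,s)\to\delta_{x}$ as $s\uparrow t$) gives, after an exact cancellation, $\L_z\G(z,\z)=0$.

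The remaining items then follow. For item 2, differentiation under the integral sign (justified by the above bounds) gives $\L u=0$; the initial datum is attained because the leading term $\int_{\R^N}Z(x,t;\x,0)\phi(\x)\,d\x$ is an approximate-identity convolution --- indeed $\int_{\R^N}Z(x,t;\x,\t)\,d\x=1$, the Jacobian $\det E(t-\t)=e^{-(t-\t)\text{tr}(B)}$ of the shift $\x\mapsto E(t-\t)\x$ cancelling the exponential prefactor in \eqref{fun-sol2} --- while the remainder involving $G$ is $O(t^{\a/2})\sup|\phi|$. Taking $\phi\equiv1$, and repeating the construction with pole at an arbitrary $\z=(\x,\t)$, gives item 3. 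The reproduction property of item 4 follows from uniqueness of bounded classical solutions of the Cauchy problem: for fixed $(\x,\t)$ and $\t<s<t$, both $(x,t)\mapsto\G(x,t;\x,\t)$ and $(x,t)\mapsto\int_{\R^N}\G(x,t;y,s)\G(y,s;\x,\t)\,dy$ solve $\L\cdot=0$ for $t>s$, are bounded there, and tend to $\G(x,s;\x,\t)$ as $t\downarrow s$, hence coincide. Finally, the upper estimate in item 5 is read off from \eqref{parametrix}, since $Z$ is comparable to $\G^{+}$ and $|Z*G|\le c\,(t-\t)^{\a/2}\,\G^{+}$ by the same convolution estimate.

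The main obstacle is the \emph{sharp} two-sided bound of item 5: the crude parametrix estimates above only yield an upper bound by a Gaussian with an uncontrolled constant, and no lower bound at all. The remedy, carried out in detail in \cite{P2} and \cite{PDF}, is a local-in-time comparison near the diagonal --- on a short time interval $\L$ is a small H\"older perturbation of its freezing, so $\G$ is squeezed between the frozen Gaussians, whose diffusion matrices lie in $[\l^{-1}I,\,\l I]$ by \textbf{(H2)} --- combined with the reproduction property of item 4 to chain many short-time steps, which brings the diffusion constant down to an arbitrary $\Lambda>\l$; the lower bound additionally exploits the strict positivity of the frozen Gaussians and the controllability structure recorded in Proposition \ref{equivalence} and Remark \ref{rem-1}.
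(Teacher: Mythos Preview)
Your parametrix sketch for items 1--2 and the upper bound in item 5 is correct and matches the construction in \cite{P2}, \cite{PDF} that the survey cites without reproducing; your derivation of items 3 and 4 from item 2 via uniqueness of the Cauchy problem is also exactly what the paper says in the paragraph following the theorem.

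The one genuine divergence is the \emph{lower} bound in item 5. You propose a short-time near-diagonal comparison (where $\L$ is a small H\"older perturbation of its freezing) followed by chaining via the reproduction property of item 4, using the controllability structure of Remark \ref{rem-1} to steer through the degenerate directions. The paper states instead that the lower bound is obtained by applying the Harnack inequality of Theorem \ref{harnack-podf} to the non-negative solution $\G(\cdot,\z)$ and building Harnack chains, following Aronson--Serrin \cite{AS}. Both are propagation arguments, but they rest on different inputs: yours needs a uniform short-time two-sided bound together with explicit control of the chain geometry in the degenerate directions, while the paper's route packages all of that into an independently proved Harnack inequality (from \cite{PODF}), at the cost of a forward reference within this survey. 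Your description is viable but slightly optimistic about how far the pure parametrix goes --- the local comparison \eqref{stime} squeezes $\G$ between multiples of $Z$ only where $Z$ is \emph{large}, i.e.\ near the pole, so reaching the full Gaussian tail of the lower bound genuinely requires one of these two chaining devices rather than falling out of the parametrix series.
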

Once the uniqueness of the Cauchy problem is guaranteed, points 3. and 4. of the above theorem will follow from point 2. The lower bound in \eqref{upper-bound} is proved by using the Harnack inequality presented in Theorem \ref{harnack-podf} and following the technique introduced by Aronson and Serrin \cite{AS} 
for the classic parabolic case. We remark that property 3. of Theorem \ref{regularity-fun} doesn't hold unless we require further regularity assumptions on the coefficients $a_{ij}$'s and $b_{j}$'s needed to define the formal adjoint $\L^{*}$ of $\L$. 

\medskip 

In view of \eqref{cauchyproblem}, the fundamental solution is the most natural tool to deal with the Cauchy problem
associated to the equation $\L u = f $. For a
given positive $T$ we denote by $S_{T}$ the strip 
of $\R^{N+1}$ defined as follows
\begin{equation*}
	S_{T} = \R^{N} \times ]0,T[,
\end{equation*}
and we look for a classical solution to the Cauchy problem  
\begin{equation}
	\label{PdC}
	\begin{cases}
		\L u = f \qquad &{\rm in} \, S_{T}, \\
		u(\cdot, 0) = \phi \qquad &{\rm in} \, \R^{N}, 
	\end{cases}
\end{equation}
with $f \in C(S_{T})$ and $\phi \in C(\R^{N})$. Once again in view of \eqref{cauchyproblem} it is clear that 
growth condition on $f$ and $\phi$ are required to ensure existence and uniqueness for the solution to \eqref{PdC}. 
The following result is due to Di Francesco and Pascucci in \cite{PDF}.
\begin{theorem}
Let $\L$ be an operator of the form \eqref{mmL} under the assumptions \textbf{(H1)}, \textbf{(H2)}, \textbf{(H3)}. 
Consider the Cauchy problem \eqref{PdC} with $\phi \in C(\R^{N})$ and $f \in C^{\a}(\O)$, in the sense of Definition \ref{holdercontinuous}. Let us suppose for some positive constant $C$ 
\begin{equation*}
	\abs{f(x,t)} \le C \, e^{C \abs{x}^{2}} \qquad \abs{\phi(x)} \le C \; e^{C \abs{x}^{2}}.
\end{equation*}
for every $x \in \R^{N}$ and $0 < t < T$. Then there exists $0 < T_{0} \le T$ such that the function
\begin{equation}
	\label{u}
	u(x,t) = \int \limits_{\R^{N}} \G(x,t, \x, 0) \, \varphi (\x) \, d\x - 
	\int \limits_{0}^{t} \int \limits_{\R^{N}}  \G(x,t, \x, \t) \, f(\x, \t) \, d\x \, d\t.
\end{equation}
is well defined for every $(x,t) \in \R^{N} \times ]0,T_{0}[$. Moreover, it is a solution to the Cauchy problem \eqref{PdC}
and  the initial condition is attained by continuity
\begin{equation*}
	\lim \limits_{(x,t) \rightarrow (x_{0}, 0)} u(x,t) = \phi(x_{0}), 
			\qquad \text{for every} \; x_{0} \in \R^{N}.
\end{equation*}
\end{theorem}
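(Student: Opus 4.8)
The plan is to obtain the solution $u$ by superposition: the first integral in \eqref{u} solves the homogeneous Cauchy problem with datum $\phi$ (this is essentially point 2 of Theorem \ref{regularity-fun}, extended to the exponentially growing datum), while the second integral — call it $w(x,t) = -\int_0^t\!\int_{\R^N}\G(x,t,\x,\t)f(\x,\t)\,d\x\,d\t$ — is the Duhamel term carrying the source $f$ with zero initial datum. First I would fix $T_0 \le T$ small enough that the Gaussian bounds \eqref{upper-bound} apply on $0 < t-\t < T_0$ and, crucially, that the Gaussian decay in $\G^{+}$ beats the factor $e^{C|\x|^2}$ coming from $f$ and $\phi$: since $\G^{+}(x,t,\x,\t)$ behaves like $t^{-Q/2}\exp(-c\langle C^{-1}(t-\t)(\cdot)\, ,\, \cdot\rangle)$ with the quadratic form controlling $|\x|^2/(t-\t)$ from below (using $C(t)=D_0(\sqrt t)C(1)D_0(\sqrt t)$ and the structure of $D_0$), for $t-\t$ small the exponent $-c|\x|^2/(t-\t) + C|\x|^2$ is negative and the integrals in \eqref{u} converge absolutely and locally uniformly. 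This gives that $u$ is well defined on $\R^N\times\,]0,T_0[$.

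Next I would verify that $u$ is a classical solution in the sense of Definition \ref{solution}. For the $\phi$-term one differentiates under the integral sign using the derivative bounds on $\G$ alluded to after \eqref{parametrix-G} (the estimates \eqref{stime}), which are again Gaussian and hence compatible with the $e^{C|\x|^2}$ growth after possibly shrinking $T_0$; since $\L_x\G(x,t,\x,0)=0$ for $t>0$, this term satisfies $\L u = 0$ there. The substantive point is the Duhamel term $w$. Writing $w(x,t) = -\int_0^t v(x,t;\t)\,d\t$ with $v(x,t;\t)=\int_{\R^N}\G(x,t,\x,\t)f(\x,\t)\,d\x$, one has $\L_x v(\cdot,\cdot;\t)=0$ for $t>\t$, so formally $\L w(x,t) = f(x,t) - \lim_{\t\to t^-} v(x,t;\t)$, and by the probabilistic normalization (point 3 of Theorem \ref{regularity-fun}, or directly the approximate-identity property of $\G$ together with the continuity of $f$) the limit equals $f(x,t)$; hence $\L w = f$. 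Making this rigorous requires justifying the differentiation of the time-convolution, which is the classical subtlety: one splits $\int_0^t = \int_0^{t-\delta} + \int_{t-\delta}^t$, differentiates freely on the first piece, and on the second piece uses the singularity estimates for $\partial_{x_i}\G$, $\partial^2_{x_ix_j}\G$ and $Y\G$ — which are integrable in $\t$ near $\t=t$ precisely because the Hölder assumption \textbf{(H3)} yields the improved parametrix bound $|\L_z Z(z,\z)| \lesssim (t-\t)^{-\alpha/2}\,\G^{+}(z,\z)$ (entering \eqref{parametrix-G}). This Hölder gain is what makes the Volterra series converge and the derivatives of $\G$ admissible, and it is the reason \textbf{(H3)} is assumed rather than mere continuity.

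Finally I would check the initial condition. For the $\phi$-term, $\lim_{(x,t)\to(x_0,0)}\int_{\R^N}\G(x,t,\x,0)\phi(\x)\,d\x = \phi(x_0)$ follows from the fact that $\G(x,t,\cdot,0)$ is an approximate identity as $t\to 0^+$ — split the integral into a neighborhood of $x_0$, where continuity of $\phi$ and $\int\G\le $ const (or $=1$ in the normalized case) control the error, and its complement, where the Gaussian tail of $\G^{+}$ against $e^{C|\x|^2}$ is small. For the Duhamel term one estimates $|w(x,t)| \le \int_0^t\!\int_{\R^N} c^{+}\G^{+}(x,t,\x,\t)\,C e^{C|\x|^2}\,d\x\,d\t \le C' t \to 0$ as $t\to 0^+$, uniformly on compacta in $x$. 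I expect the main obstacle to be exactly the rigorous differentiation of the Duhamel convolution and the attendant bookkeeping of the singular (in $\t$) but integrable kernel estimates for the derivatives of $\G$; the convergence of the defining integrals and the attainment of the datum are comparatively routine once $T_0$ is chosen so that the Gaussian wins over the exponential growth.
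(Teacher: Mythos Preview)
The paper does not actually prove this theorem: it is a survey, and the result is simply attributed to Di~Francesco and Pascucci \cite{PDF} with no argument given. So there is no ``paper's own proof'' to compare against.

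That said, your sketch is the standard route and is essentially the one carried out in \cite{PDF}: choose $T_0$ small so that the Gaussian upper bound \eqref{upper-bound} absorbs the growth $e^{C|\x|^2}$, differentiate the $\phi$-integral under the sign using the derivative estimates for $\G$, handle the Duhamel term by the $\int_0^{t-\delta}+\int_{t-\delta}^t$ splitting together with the H\"older gain from \textbf{(H3)}, and recover the initial datum via the approximate-identity behavior of $\G(x,t,\cdot,0)$. One small slip: your displayed formal identity ``$\L w(x,t) = f(x,t) - \lim_{\t\to t^-} v(x,t;\t)$'' has the wrong sign on the boundary term (with your sign convention $w=-\int_0^t v\,d\t$, the $-\partial_t$ in $\L$ produces $+\lim_{\t\to t^-} v$, and the remaining integral of $\L v$ vanishes); the conclusion $\L w = f$ is nonetheless correct.
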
	

Uniqueness results for the Cauchy problem \eqref{PdC} can be found in \cite{Punic}, \cite{PDF} and \cite{PODF}. 
Later on, Cinti and Polidoro proved in \cite{CintiPolidoro} the following result. 
\begin{theorem}	
Let $\L$ be an operator of the form \eqref{mmL} under the assumptions \textbf{(H1)}, \textbf{(H2)}, \textbf{(H3)}. 
If $u$ and $v$ are two solutions to the same Cauchy problem \eqref{PdC} satisfying the following estimate 	
\begin{equation}
	\label{estimate-u}
	\int \limits_{0}^{T} \int \limits_{\R^{N}}
	\left( \abs{u(x,t)} + \abs{v(x,t)} \right) \, 
	e^{-C \left(\abs{x}^{2}  + \tfrac{1}{t^{\b}} \right) }\, dx \, dt
	< + \infty 
\end{equation}
with $0 < \b < 1$, then $u \equiv v$. 
\end{theorem}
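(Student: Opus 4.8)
The plan is to reduce the statement to a vanishing property of the difference and then to exploit the two-sided Gaussian bounds for the fundamental solution $\Gamma$ of $\L$ provided by Theorem \ref{regularity-fun}. Set $w := u - v$. By linearity $w$ is a classical solution of $\L w = 0$ in $S_T$, the datum $w(\cdot,0) = 0$ is attained by continuity, and by \eqref{estimate-u} the weighted bound $\int_0^T\!\int_{\R^N} |w(x,t)|\, e^{-C(|x|^2 + t^{-\beta})}\, dx\, dt < +\infty$ holds for some $C > 0$ and $0 < \beta < 1$; it suffices to prove $w \equiv 0$. Since $\L$ is degenerate and in non-divergence form, a direct energy argument is delicate, so I would instead represent $w$ through $\Gamma$.

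The core identity is the representation formula
\[
 w(x,t) = \int_{\R^N} \Gamma(x,t;\xi,s)\, w(\xi,s)\, d\xi, \qquad 0 < s < t < T .
\]
Two points have to be checked. First, convergence of the integral: combining the upper estimate $\Gamma \le c^+\,\Gamma^+$ from \eqref{upper-bound} with an a priori pointwise bound for $w$ coming from the local boundedness and mean value inequalities of Section \ref{sec4} (of the type $|w(z)| \lesssim |Q_\rho(z)|^{-1}\!\int_{Q_\rho(z)}|w|$ over small cylinders $Q_\rho(z)$, together with the weighted bound above), the $\xi$-integral is finite once $t - s$ is small. Second, the identity itself: it follows from the reproduction property of $\Gamma$ and the fact that $w$ is a classical solution, but since under \textbf{(H1)}--\textbf{(H3)} the formal adjoint $\L^*$ need not exist, I would justify it by regularizing the coefficients $a_{ij}, b_j$, establishing the formula for the resulting smooth operators — for which the adjoint, hence the differentiation of $s \mapsto \int_{\R^N} \Gamma(x,t;\xi,s)\, w(\xi,s)\, d\xi$, is legitimate — and then passing to the limit using the stability of the parametrix construction and the uniform Gaussian bounds.

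The delicate step is letting $s \to 0^+$ in the representation formula. The weight $e^{-C(|x|^2 + t^{-\beta})}$ in \eqref{estimate-u} is very permissive as $t \to 0$, so $w$ is allowed to blow up there and a naive dominated convergence argument fails; this is exactly where the hypothesis $\beta < 1$ enters. On a short time strip $\R^N \times (0,\delta)$ the kernel $\Gamma(x,t;\cdot,s)$ concentrates, as $s \to 0$, at a rate in time at least like $\delta^{-1}$, which — since $\beta < 1$ — strictly dominates the singular factor $t^{-\beta}$ once $\delta$ is small; moreover for $\delta$ small relative to $C$ the Gaussian decay of $\Gamma^+$ beats the admissible $e^{C|\xi|^2}$ growth of $w$. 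Combining these short-cylinder a priori estimates with the finiteness of $\int_0^T\!\int_{\R^N}|w|\, e^{-C(|x|^2 + t^{-\beta})}$ forces $w \equiv 0$ on an initial strip $\R^N \times (0,\delta_0)$.

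It remains to propagate the conclusion to all of $S_T$. On $\R^N \times [\delta_0/2,\,T)$ the function $w$ solves a homogeneous Cauchy problem with zero datum at time $\delta_0/2$ and, away from $t = 0$, satisfies only an $e^{C|x|^2}$-type growth with no singular factor in time; hence it belongs to a classical Tikhonov-type uniqueness class, and repeating the representation argument — now without the subtlety near the initial time — gives $w \equiv 0$ in $S_T$. I expect the main obstacle to be the third step: obtaining short-cylinder estimates with a sharp dependence on the time length of the cylinder and matching them against the weight $e^{-C t^{-\beta}}$ with $\beta < 1$; by comparison, the reduction to $w$, the justification of the representation formula by approximation, and the final propagation step are comparatively routine.
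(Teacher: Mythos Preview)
The paper does not contain a proof of this theorem. It is a survey article, and the statement is quoted verbatim from Cinti and Polidoro \cite{CintiPolidoro} with only the attribution ``Later on, Cinti and Polidoro proved in \cite{CintiPolidoro} the following result'' and no further argument. There is therefore no proof in the paper against which to compare your proposal.

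That said, your outline is in the right spirit and matches the structure of the original argument as advertised by its title (\emph{Bounds on short cylinders and uniqueness in Cauchy problem for degenerate Kolmogorov equations}): the crux is indeed an a priori estimate on short time cylinders that quantifies how fast a solution with zero initial datum can grow as $t \to 0^+$, and the condition $\beta < 1$ is precisely what makes the admissible singular weight $e^{C t^{-\beta}}$ compatible with those bounds. Your identification of this as the main obstacle is correct. Two comments on the softer parts of your sketch: the pointwise a priori bound you invoke (``$|w(z)| \lesssim |Q_\rho(z)|^{-1}\int_{Q_\rho(z)}|w|$'') is not a mean value formula available in this survey for sign-changing solutions of non-divergence operators under \textbf{(H1)}--\textbf{(H3)} alone, so you would need to derive such a local $L^1$-to-$L^\infty$ estimate separately; and the regularization-of-coefficients step to justify the representation formula, while plausible, requires some care because the parametrix bounds in Theorem \ref{regularity-fun} are stated for fixed coefficients and stability under approximation is not discussed in the paper.
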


We eventually quote the main uniqueness result of \cite{PODF}, that doesn't require any growth assumptions on the solutions $u$ and $v$. 
\begin{theorem}	
Let $\L$ be an operator of the form \eqref{mmL} under the assumptions \textbf{(H1)}, \textbf{(H2)}, \textbf{(H3)}. 
If $u$ and $v$ are two non-negative solutions to the same Cauchy problem \eqref{PdC}, with $f = 0$ and $\varphi \ge 0$, then $u \equiv v$. 
\end{theorem}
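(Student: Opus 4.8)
The plan is to show that \emph{any} non-negative classical solution $u$ of $\L u = 0$ in $S_T$ attaining the datum $\varphi$ must coincide with the potential of its initial datum,
\begin{equation*}
  \underline u(x,t) \; := \; \int_{\R^N} \G(x,t;\x,0)\,\varphi(\x)\,d\x ;
\end{equation*}
applied to $u$ and to $v$, which share the datum $\varphi$, this gives $u \equiv \underline u \equiv v$. I would deduce this from two facts: \textbf{(A)} a one-sided lower bound $u \ge \underline u$ on $S_T$ (in particular $\underline u < +\infty$ everywhere); and \textbf{(B)} a Widder-type vanishing lemma: if $w \ge 0$ solves $\L w = 0$ in $S_T$ and $w(\cdot,s) \to 0$ locally uniformly as $s \to 0^+$, then $w \equiv 0$. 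Granting these, the function $w := u - \underline u$ is non-negative by \textbf{(A)}, solves $\L w = 0$ in $S_T$ because $\underline u$ is an $\L$-solution (by Theorem \ref{regularity-fun}, together with the interior Schauder estimates of Theorem \ref{schauder1} to pass to the limit in the approximation of $\underline u$ described below), and has vanishing initial layer because both $u(\cdot,s)$ and $\underline u(\cdot,s)$ tend to $\varphi$ locally uniformly; hence $w \equiv 0$ by \textbf{(B)}, i.e. $u \equiv \underline u$, and symmetrically $v \equiv \underline u$.

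For \textbf{(A)} I would argue by truncation and a Phragm\'en--Lindel\"of principle on the strip. Choose smooth cut-offs $\eta_R$ with $\mathbf 1_{B_R} \le \eta_R \le \mathbf 1_{B_{2R}}$ (Euclidean balls $B_R = \{|x|<R\} \subset \R^N$) and $\eta_R \uparrow 1$, set $\varphi_R := \varphi\,\eta_R \in C_b(\R^N)$, and let $u_R(x,t) := \int_{\R^N} \G(x,t;\x,0)\,\varphi_R(\x)\,d\x$, which by Theorem \ref{regularity-fun} is a bounded classical solution of $\L u_R = 0$ in $S_T$ with initial datum $\varphi_R$ (boundedness follows from the Gaussian upper bound \eqref{upper-bound}, the kernel of the constant-coefficient majorant $\L^+$ having bounded mass on $[0,T]$). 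Then $u - u_R$ solves $\L(u-u_R)=0$ in $S_T$, attains the non-negative datum $\varphi - \varphi_R \ge 0$, and is bounded below by $-\sup_{\R^N}\varphi_R$. A Tychonoff-type barrier for $\L$ on short time strips — a positive supersolution of $\L$ tending to $+\infty$ as $|x| \to \infty$, built from the fundamental solution of a constant-coefficient majorant of $\L$ and the flow of $\langle Bx, D\rangle - \p_t$ — turns this into a minimum principle, giving $u \ge u_R$ in $S_T$; letting $R \to \infty$ and using monotone convergence ($\varphi_R \uparrow \varphi$, $\G \ge 0$) yields $u \ge \underline u$.

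For \textbf{(B)} — the heart of the matter and the step I expect to be hardest — I would invoke the scale-invariant Harnack inequality for $\L$ (Theorem \ref{harnack-podf}). Fix $(x_0,t_0) \in S_T$ and, for $r \in (0,\sqrt{t_0})$, the Harnack cylinder of ``radius'' $r$ with top at $(x_0,t_0)$, which is contained in $S_T$; the Harnack inequality gives $w(x_0,t_0) \le C\,w(\x,s)$ for every $(\x,s)$ in the associated lower region, with $C$ independent of $r$ (by the translation invariance \eqref{ell} of $\L$ and the fact that $r$ stays in a compact subinterval of $(0,\infty)$). Letting $r \uparrow \sqrt{t_0}$, the times $s$ in the lower region may be taken arbitrarily close to $0$ while the corresponding $\x$ remain in a fixed compact set $K = K(x_0,t_0)$ (the drift curve $\{E(-\t)x_0 : 0 \le \t \le t_0\}$ thickened by a bounded amount); since $\sup_{K}|w(\cdot,s)| \to 0$ as $s \to 0$, we conclude $w(x_0,t_0) = 0$, and as $(x_0,t_0)$ was arbitrary, $w \equiv 0$.

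The main obstacle is Step \textbf{(B)}: it is exactly the point where one must exploit non-negativity to dispense with any growth condition, and it rests essentially on having a \emph{scale-invariant} Harnack inequality for $\L$ usable all the way down to the initial time. A secondary technical difficulty lies in Step \textbf{(A)}, namely the Phragm\'en--Lindel\"of principle, i.e. the construction of a Tychonoff-type barrier compatible with the unbounded drift $\langle Bx, D\rangle$, together with the correct handling of the weak maximum principle on bounded cylinders whose lateral boundary is characteristic in the degenerate directions.
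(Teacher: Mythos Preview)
The paper does not give a proof of this statement; it quotes it from \cite{PODF}. There the argument follows the Aronson--Widder route: one proves a \emph{representation theorem} asserting that every non-negative solution $u$ of $\L u = 0$ in $S_T$ has a unique non-negative initial trace $\mu$ with $u(x,t) = \int_{\R^N} \G(x,t;\xi,0)\,d\mu(\xi)$, and uniqueness follows at once because both $u$ and $v$ must have trace $\varphi\,dx$. The representation theorem rests on a uniform local bound $\int_K u(\xi,s)\,d\xi \le C_K$ for all $s \in (0,t_0)$, obtained from the Gaussian lower bound on $\G$ in \eqref{upper-bound} together with the super-mean-value inequality $u(x_0,t_0) \ge \int_{\R^N} \G(x_0,t_0;\xi,s)\,u(\xi,s)\,d\xi$ (proved via Green's representation on expanding cylinders, using $u\ge 0$ on the lateral boundary).

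Your Step \textbf{(B)} contains a genuine error. You write that Theorem \ref{harnack-podf} gives $w(x_0,t_0) \le C\,w(\xi,s)$ for $(\xi,s)$ in the lower region $\H^-_r(x_0,t_0)$, but the parabolic Harnack inequality goes the other way: \eqref{harnack-difra} and \eqref{harnack-LP} say that values at \emph{earlier} times are dominated by the value at the \emph{later} time $t_0$, i.e.\ $w(\xi,s)\le c\,w(x_0,t_0)$. This direction cannot be reversed --- think of $w = \G(\,\cdot\,;0,0)$, small at time $t_0$ and unbounded as $t\to 0^+$. Moreover, even granting your inequality, a single cylinder $\H_r(x_0,t_0) \subset S_T$ forces $r < \sqrt{t_0}$, and then the times in $\H^-_r(x_0,t_0)$ lie in $\big(t_0 - \gamma r^2,\, t_0 - \beta r^2\big) \subset \big((1-\gamma)t_0,\, t_0\big)$, which is bounded away from $0$; letting $r \uparrow \sqrt{t_0}$ does \emph{not} send $s$ to $0$. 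Reaching $s \approx 0$ by a Harnack chain multiplies the constant by $c^k$ with $k \to \infty$, and the argument collapses. (Also, $\L$ has variable coefficients and is not left-invariant, so your appeal to \eqref{ell} is misplaced.) The vanishing lemma you need is essentially equivalent to the representation theorem and is not a consequence of pointwise Harnack; the integral $L^1$ estimate described above is the missing mechanism. Your Step \textbf{(A)} is closer to workable, though the construction of a Tychonoff-type barrier compatible with the unbounded drift $\langle Bx,D\rangle$ is itself a nontrivial piece that you only sketch.
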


\subsection{The Dirichlet problem} 

In the sequel $\Omega$ will denote a bounded domain of $\R^{N+1}$. For every $f \in C(\O)$ and $\varphi \in C(\p \O, \R)$, we consider the Dirichlet problem for the operator $\L$ with H\"older continuous coefficients
\begin{equation}
    \label{PdD}
    \begin{cases}
        \L u = f \hspace{6mm} &\text{in} \hspace{1mm} \O, \\
        u = \varphi &\text{on} \hspace{1mm} \p \O.
    \end{cases}
\end{equation}
This problem has been studied by Manfredini in \cite{M} in the framework of the Potential Theory. In accordance with the usual axiomatic approach, we denote by $H^{\O}_{\varphi}$ the Perron-Wiener-Brelot-Bauer solution to the Dirichlet problem \eqref{PdD} with $f=0$. 
In order to discuss the boundary condition of the problem \eqref{PdD} we say that a point $z_{0} \in \p \O$ is $\L-$regular for $\O$ if 
\begin{equation} \label{boundaryreg}
	\lim \limits_{z \rightarrow z_{0}} H^{\O}_{\varphi}(z) \qquad {\rm for \, every} \, \varphi \in C(\p \O).
\end{equation}

The first result for the existence of a solution to the Dirichlet problem \eqref{PdD} for an operator $\L$ with H\"older continuous coefficiens is proved by Manfredini in \cite{M}, Theorem 1.4. 
\begin{theorem}
    \label{existence1}
Let $\L$ be an operator in the form \eqref{mmL} satisfying conditions \textbf{(H1), (H2), (H3)}, and assume that the matrix $B$ has the form \eqref{B0}. Suppose that $f \in C^\a (\overline \O)$ and $\varphi \in C(\p \O)$. Then there exixts a unique solution $u \in C^{2,\a}_\loc (\O)$ to the Dirichlet problem \eqref{PdD}. The function $u$ is a classical solution to $\L u = f$ in $\O$, and  $\lim \limits_{z \rightarrow z_0} u(z) = \varphi (z_0)$ for every $\L-$regular point $z_0 \in \p \O$.
\end{theorem}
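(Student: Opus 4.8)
The plan is to solve \eqref{PdD} by the Perron--Wiener--Brelot--Bauer method, combining the Schauder estimates of Theorem~\ref{schauder1}, the Harnack inequality of Theorem~\ref{harnack-podf}, and the fundamental solution of Theorem~\ref{regularity-fun}. The first step is to dispose of the source term $f$. Since $\O$ is bounded, I would extend $f$ to a compactly supported function of class $C^\a$ on $\R^{N+1}$ and set $w(z) := - \int_{\R^{N+1}} \G(z,\z) \, f(\z) \, d\z$, the Newtonian potential of $f$ with respect to the fundamental solution $\G$. From the parametrix construction and the bounds of Theorem~\ref{regularity-fun} (see \cite{P2}, \cite{M}, \cite{PODF}) one gets that $w$ is continuous on $\R^{N+1}$, belongs to $C^{2,\a}_{\loc}(\R^{N+1})$, and satisfies $\L w = f$. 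Hence $u$ is a solution of \eqref{PdD} if and only if $v := u - w$ is a classical solution of $\L v = 0$ in $\O$ taking the continuous boundary value $\psi := \varphi - w|_{\p\O}$; moreover, since $w \in C(\overline\O)$, a point $z_0 \in \p\O$ is $\L$-regular for $\O$ in the sense of \eqref{boundaryreg} if and only if it is $\L$-regular for the problem with datum $\psi$ and zero source. It therefore suffices to treat the case $f = 0$.

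For $\L v = 0$ I would invoke potential theory. The sheaf assigning to each open set $V \subset \R^{N+1}$ the space of continuous classical solutions of $\L v = 0$ in $V$, in the sense of Definition~\ref{solution}, satisfies the axioms of a Bauer harmonic space: the sheaf axiom is trivial, the constant function $1$ is a strictly positive solution, a basis of $\L$-regular open sets is provided by suitably shaped small sets on which the Dirichlet problem for the model operator $\D_{m_0} + Y$ in \eqref{mmLap} can be solved explicitly through its fundamental solution $\G_0$ (here the hypothesis $B = B_0$ and the attendant dilation structure $D(r)$ are used), solvability and $\L$-regularity being then transferred to the variable-coefficient operator $\L$ by the method of continuity together with the a priori bound of Theorem~\ref{schauder1} and a barrier argument (this is the content of \cite{M}), and the Doob convergence axiom follows from the Harnack inequality of Theorem~\ref{harnack-podf}. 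Granting this, Wiener's resolutivity theorem shows that every $\psi \in C(\p\O)$ is resolutive, so the Perron--Wiener--Brelot--Bauer solution $H^\O_\psi$ is well defined, is $\L$-harmonic in $\O$, and satisfies $\sup_\O |H^\O_\psi| \le \sup_{\p\O} |\psi|$.

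It then remains to upgrade regularity and record the boundary behaviour. Locally $H^\O_\psi$ coincides with the average of its boundary values against the harmonic measure of the regular sets above, whose density is built from $\G_0$ through the parametrix and is regular enough that $H^\O_\psi$ is a classical solution of $\L v = 0$ in $\O$; Theorem~\ref{schauder1} then yields $H^\O_\psi \in C^{2,\a}_{\loc}(\O)$. The relation $\lim_{z \to z_0} H^\O_\psi(z) = \psi(z_0)$ at each $\L$-regular $z_0 \in \p\O$ is exactly \eqref{boundaryreg}. Setting $u := H^\O_\psi + w$ gives a function in $C^{2,\a}_{\loc}(\O)$ with $\L u = f$ in $\O$ and $\lim_{z\to z_0} u(z) = \varphi(z_0)$ at every $\L$-regular boundary point, which is the asserted solution. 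Uniqueness follows from the maximum principle: if $u_1, u_2$ are two such solutions, $v := u_1 - u_2$ is $\L$-harmonic in $\O$ with upper limit $\le 0$ at every $\L$-regular boundary point, and since the non-regular points of $\p\O$ carry no harmonic measure, the generalized maximum principle for the harmonic space forces $v \le 0$ in $\O$; interchanging $u_1$ and $u_2$ gives $v \equiv 0$.

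I expect the main difficulty to be the construction of a basis of $\L$-regular open sets and the corresponding local solvability of the Dirichlet problem with continuous data: this is where the degeneracy of $\L$ and the first-order drift $Y$ must be dealt with, through dilation-adapted sets, explicit estimates on $\G_0$, and the construction of barriers at boundary points, whereas the splitting-off of $f$, the Schauder upgrade, and the uniqueness are comparatively routine given Theorems~\ref{schauder1}, \ref{regularity-fun} and \ref{harnack-podf}.
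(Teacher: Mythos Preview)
The paper does not actually supply a proof of Theorem~\ref{existence1}: this is a survey, and the theorem is quoted from Manfredini~\cite{M}, Theorem~1.4. The only proof-related remark the paper makes is that ``the uniqueness of the solution follows straightforwardly from the following \emph{weak maximum principle}'' (Theorem~\ref{weak-max-princ}). Your Perron--Wiener--Brelot--Bauer sketch is faithful to the approach of~\cite{M} that the paper is citing: reduce to $f=0$ via a potential, verify the Bauer harmonic-space axioms using dilation-adapted regular sets built from the model operator $\Delta_{m_0}+Y$, obtain $H^\O_\psi$ by Wiener resolutivity, and upgrade to $C^{2,\alpha}_{\loc}$ via the Schauder estimates. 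Your identification of the hard step --- producing a basis of $\L$-regular sets and proving local solvability there --- is exactly right, and it is where~\cite{M} does the work.

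One point of divergence worth flagging is the uniqueness argument. The paper (and~\cite{M}) invokes the elementary weak maximum principle of Theorem~\ref{weak-max-princ}, which applies to functions continuous on $\overline\O$ with $u\le 0$ on all of $\p\O$; uniqueness in the statement is thus most naturally read either as uniqueness of the PWB solution or as uniqueness among solutions in $C(\overline\O)$ attaining the full datum $\varphi$. Your argument instead appeals to a generalized maximum principle together with the fact that the non-regular boundary points carry no harmonic measure. That is a correct and more refined statement from abstract potential theory, but it is not what the paper uses, and it is also stronger than what is needed to match the paper's formulation.
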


The assumption that the matrix $B$ is of the form \eqref{B0} has been introduced to simplify the problem and seems to be unnecessary. Indeed, this condition is removed in \cite{PODF}, where a specific family of open sets $\Omega$ is considered. 
The uniqueness of the solution follows straightfarwardly from the following \emph{weak maximum principle} that can be found in the proof of Proposition 4.2 of \cite{M}.
\begin{theorem}
	\label{weak-max-princ}
Let $\L$ be an operator in the form \eqref{mmL} satisfying conditions \textbf{(H1), (H2), (H3)}, and assume that the matrix $B$ has the form \eqref{B0}. Let $\Omega$ be a bounded open set of $\R^{N+1}$, and let $u$ be a continuous function in $\overline{\O}$, such that  $\p_{x_{j}} u, \p_{x_{i}x_{j}}^{2} u$, for $i,j =1, \ldots, m_{0}$ and $Yu$ are continuous in $\O$. If moreover 
		\begin{equation*}
			\begin{cases}
                \L u\ge 0 & \text{in} \; \O, \\
				u \le 0 & \text{on} \; \p \O,
			\end{cases}
		\end{equation*}
	then $u \le 0$ in $\O$. 
\end{theorem}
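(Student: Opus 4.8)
The plan is to prove the weak maximum principle (Theorem \ref{weak-max-princ}) following the classical parabolic strategy, adapted to the degenerate Kolmogorov geometry via the characteristic curves of the drift field $Y$. First I would reduce to a strict subsolution: for a parameter $\e>0$ consider $u_\e(x,t) := u(x,t) - \e\, e^{\l t}$ where $\l$ is a large constant to be chosen, or more simply $u_\e(x,t) := u(x,t) + \e t$. Since $\L u \ge 0$ and $\L(\e t) = -\e$, we get $\L u_\e = \L u - \e < 0$ strictly in $\O$; moreover $u_\e \le u \le 0$ on $\p\O$. If I can show $u_\e \le 0$ in $\O$ for every $\e>0$, then letting $\e \to 0$ yields $u \le 0$ in $\O$, which is the claim. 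So it suffices to prove: if $\L v < 0$ in $\O$ and $v \le 0$ on $\p\O$, then $v \le 0$ in $\O$.

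For this strict version I argue by contradiction. Since $\overline\O$ is compact and $v \in C(\overline\O)$, $v$ attains a positive maximum $m = \max_{\overline\O} v > 0$ at some point $z_0 = (x_0,t_0) \in \O$ (it cannot be on $\p\O$ since $v \le 0$ there). Among all maximum points, I would choose one with minimal time coordinate $t_0$ — this is the key geometric step that replaces the usual "first time the maximum is attained" argument and handles the degeneracy: the operator $\L$ only sees second derivatives in the $x_1,\dots,x_{m_0}$ directions, but it sees the full first-order transport term $Y = \langle Bx, D\rangle - \p_t$, whose integral curve through $z_0$ is $\g(s) = (E(-s)x_0, t_0 - s)$. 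At the interior maximum $z_0$: the Hessian $(\p^2_{x_ix_j} v(z_0))_{i,j=1,\dots,m_0}$ is negative semidefinite, so $\sum_{i,j=1}^{m_0} a_{ij}(z_0)\p^2_{x_ix_j} v(z_0) \le 0$ because $A(z_0)$ is positive definite on $\R^{m_0}$ by \textbf{(H2)}; the first-order Euclidean derivatives $\p_{x_j} v(z_0) = 0$ for $j=1,\dots,m_0$; and the Lie derivative satisfies $Yv(z_0) \le 0$, because moving backward along $\g$ (i.e. $s > 0$ small, going to earlier times) lands at points where, by the minimality of $t_0$, $v \le m = v(z_0)$, so the difference quotient defining $Yv(z_0)$ in \eqref{lie-diff} has a sign. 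Adding these up gives $\L v(z_0) \le 0$, contradicting $\L v(z_0) < 0$.

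The main obstacle is making the sign of $Yv(z_0)$ rigorous. One has to be careful that the curve $\g(s) = (E(-s)x_0, t_0-s)$ for small $s>0$ stays inside $\O$ (true for $s$ small since $\O$ is open and $z_0 \in \O$), that $\g(s)$ has strictly smaller time coordinate $t_0 - s < t_0$, and hence by the choice of $z_0$ as a minimal-time maximizer we indeed have $v(\g(s)) \le m$ for those $s$; then
\begin{equation*}
	Yv(z_0) = \lim_{s\to 0^+} \frac{v(\g(s)) - v(\g(0))}{s} = \lim_{s\to 0^+} \frac{v(\g(s)) - m}{s} \le 0.
\end{equation*}
One subtlety: the limit in \eqref{lie-diff} is two-sided, but since it is assumed to exist and is finite, it equals the one-sided limit from $s\to 0^+$, so the bound is legitimate. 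A second routine point is that the Hessian and gradient conditions at an interior maximum require only that $v$ has the stated continuous Euclidean derivatives $\p_{x_j} v, \p^2_{x_ix_j} v$ for $i,j=1,\dots,m_0$ near $z_0$, which is part of the hypothesis; one restricts $v$ to the $m_0$-dimensional affine slice through $z_0$ in the $x_1,\dots,x_{m_0}$ directions and applies the classical second-derivative test there. Finally I would assemble the pieces: $\L v(z_0) = \sum_{i,j=1}^{m_0} a_{ij}(z_0)\p^2_{x_ix_j}v(z_0) + \sum_{j=1}^{m_0} b_j(z_0)\p_{x_j}v(z_0) + Yv(z_0) \le 0 + 0 + 0 = 0$, contradicting strict negativity, so no interior positive maximum exists, $v \le 0$ in $\O$, and undoing the $\e$-perturbation completes the proof.
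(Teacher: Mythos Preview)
Your overall strategy is the standard one and is essentially what one finds in Manfredini \cite{M}, which is where the paper defers for the proof (the paper itself gives no argument for Theorem~\ref{weak-max-princ}, only the citation). However, there is a genuine sign error that breaks the argument as written.

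You set $u_\e = u + \e t$ and correctly compute $\L u_\e = \L u - \e$. But from $\L u \ge 0$ this only gives $\L u_\e \ge -\e$, not $\L u_\e < 0$. Worse, even if one had $\L v < 0$ strictly, the conclusion $\L v(z_0) \le 0$ at an interior maximum does \emph{not} contradict it; the two inequalities point the same way. The correct perturbation is the opposite sign: take $u_\e := u - \e t$, so that $\L u_\e = \L u + \e \ge \e > 0$ strictly. Then your own computation at an interior maximum $z_0$ gives $\L u_\e(z_0) \le 0$, which genuinely contradicts $\L u_\e(z_0) > 0$. Hence $u_\e$ attains its maximum on $\p\O$, and since $u_\e \to u$ uniformly on the compact set $\overline\O$ (because $t$ is bounded there), one obtains $\max_{\overline\O} u = \max_{\p\O} u \le 0$. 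Your boundary claim ``$u_\e \le u$ on $\p\O$'' is also not justified as stated (it would require $t\le 0$ on $\p\O$), but with the corrected sign this issue disappears in the limiting step.

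One minor remark: the ``minimal time'' selection of $z_0$ is harmless but unnecessary. At \emph{any} interior global maximum one has $v(\g(s)) \le v(z_0)$ for all small $s$ of either sign, so the one-sided difference quotients from $s\to 0^+$ and $s\to 0^-$ are $\le 0$ and $\ge 0$ respectively; since $Yv(z_0)$ exists by hypothesis, in fact $Yv(z_0)=0$. The rest of your analysis (vanishing of $\p_{x_j}v$ for $j\le m_0$, negative semidefiniteness of the $m_0\times m_0$ Hessian against the positive matrix $A(z_0)$) is correct.
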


\medskip

In order to discuss the boundary regularity of $\Omega$, we recall that the analogous of the Bouligand theorem for operators $\L$ has been proved in \cite{M}. Specifically, a point $z_{0} \in \p \O$ is $\L-$regular if there exists a local barrier at $z_{0}$, that is there exists a neighborhood $V$ of $z_{0}$ and a function $w \in C^{2,\a}(V)$ such that 
	\begin{equation*}
		\label{local_bar}
		w(z_{0}) = 0, \; w(z) > 0 \; \text{for } \, z \in \overline{\O \cap V} \setminus \{ z_{0} \} \qquad \text{and}
		\qquad \L w \le 0 \;  \text{in} \; \O \cap V.
	\end{equation*}

Let $z_0$ be point belonging to $\p \O$. We say that a vector $\nu \in \R^{N+1}$ is an \emph{outer normal to $\Omega$ at $z_0$} if there exists a positive $r$ such that $B(z_1, r |\nu|) \cap \overline{\Omega} = \{ z_0 \}$. Here $B(z_1, r |\nu|)$ is the Euclidean ball centered at $z_1 = z_0 + r \nu$ and radius $r |\nu|$. Note that this definition doesn't require any regularity on $\partial \O$ and several linearly independent vectors are allowed to be outer normal to $\Omega$ at the same point $z_0$. The following result proved in \cite{M} gives a very simple geometric condition for the boundary regularity of $\Omega$ and is in accordance with the Fichera's classification of $\p \O$. 
\begin{theorem}
    \label{Fichera}
Let $\L$ be an operator in the form \eqref{mmL} satisfying conditions \textbf{(H1), (H2), (H3)}. Consider the Dirichlet problem \eqref{PdD}, and let $z_0 \in \partial \Omega$. Assume that $\nu$ is an outer normal to $\Omega$ at $z_0$. Then it holds
\begin{itemize}
    \item if $\langle A(z_0) \nu, \nu \rangle \ne 0$, then there exists a local barrier at $z_{0}$;
    \item if $\langle A(z_0) \nu, \nu \rangle = 0$, and $\langle Y (z_0), \nu \rangle > 0$ then there exists a local barrier at $z_{0}$;
    \item if $\langle A(z_0) \nu, \nu \rangle = 0$, and $\langle Y (z_0), \nu \rangle < 0$ then $z_{0}$ is non regular.
\end{itemize}
\end{theorem}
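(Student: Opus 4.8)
The strategy is to reduce the first two alternatives to the construction of a local barrier at $z_0$, so that the Bouligand-type criterion recalled above applies, and to prove the third alternative by showing directly that $z_0$ is not $\L$-regular. I would begin with two elementary remarks. Since by \textbf{(H2)} the matrix $A(z_0)$ has the block form \eqref{A} with an $m_0 \times m_0$ strictly positive block $A_0(z_0)$, writing $\nu = (\nu', \nu'', \nu_{N+1})$ with $\nu' \in \R^{m_0}$ gives $\langle A(z_0)\nu, \nu\rangle = \langle A_0(z_0)\nu', \nu'\rangle$, hence $\langle A(z_0)\nu, \nu\rangle \ne 0$ if and only if $\nu' \ne 0$. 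Moreover, if $\nu$ is an outer normal at $z_0$ with associated $r>0$ and $z_1 := z_0 + r\nu$, then $|z - z_1| \ge r|\nu|$ for every $z \in \overline{\O}$, which after expanding the square reads
\begin{equation*}
	\langle z - z_0, \nu\rangle \le \tfrac{1}{2r}\,|z - z_0|^2 \qquad \text{for every } z \in \overline{\O},
\end{equation*}
with equality exactly on the sphere $\partial B(z_1, r|\nu|)$, which meets $\overline{\O}$ only at $z_0$.

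\emph{First case, $\langle A(z_0)\nu,\nu\rangle \ne 0$ (i.e. $\nu'\ne 0$).} I would take, for a large $\mu > 0$ and a small neighborhood $V$ of $z_0$,
\begin{equation*}
	w(z) = e^{-\mu\,r^2 |\nu|^2} - e^{-\mu\,\rho(z)}, \qquad \rho(z) := |z - z_1|^2 .
\end{equation*}
By the two remarks, $w(z_0) = 0$ and $w > 0$ on $\overline{\O\cap V}\setminus\{z_0\}$. Since $Y$ is first order, $Yw = \mu e^{-\mu\rho}\,Y\rho$, and a direct computation gives
\begin{equation*}
	\L w = \mu\, e^{-\mu\rho}\left[\,\sum_{i,j=1}^{m_0} a_{ij}(z)\,\p^2_{x_i x_j}\rho + \sum_{j=1}^{m_0} b_j(z)\,\p_{x_j}\rho + Y\rho \;-\; \mu\sum_{i,j=1}^{m_0} a_{ij}(z)\,\p_{x_i}\rho\,\p_{x_j}\rho \,\right].
\end{equation*}
The first three bracket terms are bounded on $V$. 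The last one equals $\mu$ times a quadratic form in the first $m_0$ coordinates of $2(x-\x_1)$, which tend to $-2r\nu'\ne 0$ as $z\to z_0$; hence, by \textbf{(H2)}, that term is $\le -c\,\mu$ on a sufficiently small $V$ for some $c>0$. Therefore $\L w < 0$ in $\O\cap V$ once $\mu$ is large enough, so $w$ is a local barrier and $z_0$ is $\L$-regular.

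\emph{Second case, $\langle A(z_0)\nu,\nu\rangle = 0$ and $\langle Y(z_0),\nu\rangle > 0$ (so $\nu'=0$).} Now $\L$ degenerates in the normal direction and the previous barrier is useless. The simplest candidate $w_1(z) = |z-z_1|^2 - r^2|\nu|^2$ is nonnegative on $\overline{\O}$, vanishes only at $z_0$, and satisfies (the drift and mixed terms dropping at $z_0$ because $\nu'=0$)
\begin{equation*}
	\L w_1(z_0) \;=\; 2\sum_{i=1}^{m_0} a_{ii}(z_0) \;-\; 2\,\langle z_1 - z_0,\, Y(z_0)\rangle ,
\end{equation*}
which is $\le 0$ only when the exterior ball can be chosen large, i.e. essentially when $\partial\O$ is flat or convex towards $z_1$ in the $x'$-directions. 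In general I would look for $w(z) = \varphi(\langle z - z_0, \nu\rangle) + \Psi(z)$ with $\varphi$ smooth, decreasing, $\varphi(0) = 0$, and $\Psi \ge 0$ a correction vanishing at $z_0$ together with $\nabla\Psi(z_0)$ and with the pure second $x'$-derivatives $\p^2_{x_ix_j}\Psi(z_0)$, $i,j\le m_0$, chosen by means of the exterior-ball inequality so that $w\ge 0$ on $\overline{\O\cap V}$ with equality only at $z_0$. Because $\nu'=0$, the operator $\L$ does not see $\varphi(\langle z-z_0,\nu\rangle)$ at second order, the terms $\sum_{j\le m_0} b_j\,\p_{x_j}\langle z-z_0,\nu\rangle$ vanish identically, and $Y\langle z-z_0,\nu\rangle$ equals $\langle Y(z_0),\nu\rangle$ at $z_0$; hence the $\varphi$-part contributes $\varphi'(0)\,\langle Y(z_0),\nu\rangle < 0$ to $\L w(z_0)$, while by the prescribed vanishing of its derivatives $\Psi$ contributes $O(|z-z_0|)$ there, so that $\L w \le 0$ in $\O\cap V$ for $V$ small. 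The genuinely delicate point — and, I expect, the main obstacle of the whole proof — is to exhibit such a correction $\Psi$ that is really nonnegative on $\overline{\O}$, in particular on the portion of $\partial\O$ where $\langle z-z_0,\nu\rangle$ is positive, while keeping $\p^2_{x_ix_j}\Psi(z_0)=0$, and to check that all error terms are absorbed by the strictly negative main term produced by $\langle Y(z_0),\nu\rangle > 0$.

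\emph{Third case, $\langle A(z_0)\nu,\nu\rangle = 0$ and $\langle Y(z_0),\nu\rangle < 0$.} Here I would show $z_0$ is not $\L$-regular. Since $\langle Y(z_0),\nu\rangle < 0$, the integral curve $s\mapsto\gamma(s) = (E(-s)x_0, t_0 - s)$ of $Y$ issued from $z_0$ enters $\O$ for small $s>0$ and leaves $\overline{\O}$ for small $s<0$: thus $z_0$ lies on the part of $\partial\O$ from which the drift pushes trajectories into $\O$, the exact analogue of the final-time boundary of a cylinder for the heat equation, which is classically non-regular. I would then fix $\varphi\in C(\partial\O)$ with $0\le\varphi\le 1$, $\varphi(z_0)=1$ and $\varphi\equiv 0$ outside a small neighborhood of $z_0$, and show $\limsup_{z\to z_0} H^{\O}_{\varphi}(z) < 1 = \varphi(z_0)$, which contradicts the regularity of $z_0$. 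The value of $H^{\O}_{\varphi}$ near $z_0$ is transported, along the $Y$-flow, from the boundary data away from $z_0$ where $\varphi$ vanishes; this can be made precise by a comparison on $\O\cap V$ with a function built as in the barriers above from $\langle z-z_0,\nu\rangle$ and $|z-z_0|^2$ — whose sign is now favorable precisely because $\langle Y(z_0),\nu\rangle < 0$ — using the weak maximum principle of Theorem \ref{weak-max-princ}, and it is parallel, via the uniqueness for the Cauchy problem in Theorem \ref{regularity-fun}, to the classical parabolic argument.
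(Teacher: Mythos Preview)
The paper itself does not give a proof of Theorem~\ref{Fichera}; it is a survey, and the theorem is simply attributed to Manfredini~\cite{M} (for $B=B_0$) and to Di~Francesco--Polidoro~\cite{PODF} (general case). So there is no ``paper's proof'' to compare against, and one must judge your proposal on its own merits.

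Your first case is correct and is exactly the classical Hopf barrier argument: the point is that $\langle A(z_0)\nu,\nu\rangle\ne 0$ forces $\nu'\ne 0$, so the penalizing term $-\mu\sum a_{ij}\,\partial_{x_i}\rho\,\partial_{x_j}\rho$ is strictly negative near $z_0$ and dominates for large $\mu$. This part would stand.

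The second case, however, is where your proposal is genuinely incomplete. You correctly notice that when $\nu'=0$ the exponential trick is useless, because the quadratic term $\sum a_{ij}\,\partial_{x_i}\rho\,\partial_{x_j}\rho$ vanishes at $z_0$; and you correctly locate the favorable term $\varphi'(0)\,\langle Y(z_0),\nu\rangle$. But you then ask for a correction $\Psi\ge 0$ on $\overline{\Omega}$ with $\Psi(z_0)=0$, $\nabla\Psi(z_0)=0$ and $\partial^2_{x_ix_j}\Psi(z_0)=0$ for $i,j\le m_0$, whose existence you explicitly leave open. This is the whole content of the second bullet: without producing that $\Psi$ (or an alternative barrier), nothing has been proved. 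Note in particular that your first candidate $w_1=|z-z_1|^2-r^2|\nu|^2$ gives $\L w_1(z_0)=2\,\mathrm{tr}\,A_0(z_0)-2r\,\langle Y(z_0),\nu\rangle$, and you cannot freely enlarge $r$ since the exterior ball condition fixes it; the construction used in \cite{M} is not of this naive form.

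Your third case is a reasonable heuristic (the $Y$-flow enters $\Omega$, so data at $z_0$ are not ``seen'' by solutions), but it is not a proof either: you sketch a comparison using ``a function built as in the barriers above'', yet you have not actually built that function, and the appeal to Theorem~\ref{weak-max-princ} on $\Omega\cap V$ would require controlling $H^\Omega_\varphi$ on the inner part of $\partial(\Omega\cap V)$, which you do not address. In short, your proposal settles only the non-degenerate first alternative; the degenerate second and third alternatives, which are the genuinely new content in the Kolmogorov setting, remain open in your write-up.
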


\begin{center}
	\scalebox{0.9}{
\begin{pspicture*}(-5,-3.2)(5,1.5)
\pspolygon[fillstyle=solid,fillcolor=grey](-3.2,-1.95)(-1.2,-0.95)(3.2,-1.95)(1.2,-2.95)%

\pspolygon[fillstyle=solid,fillcolor=lgrey](-3.2,0.05)(-3.2,-1.95)(-1.2,-0.95)(-1.2,1)%
\pspolygon[fillstyle=solid,fillcolor=llgrey](-1.2,-0.95)(-1.2,1)(1.07,0.52)(1.07,-1.47)%

\pcline[linecolor=black](0,-2)(0,1.4)
\pcline[linecolor=black](0,1.4)(-.05,1.2)%
\pcline[linecolor=black](0,1.4)(.05,1.2)%
\uput[0](0,1.4){$t$}

\pspolygon[fillstyle=solid,fillcolor=lgrey](1.2,-2.95)(1.2,-0.95)(-0.9,-0.47)(-0.9,-2.47)%
\pspolygon[fillstyle=solid,fillcolor=grey](1.2,-2.95)(3.2,-1.95)(3.2,0.05)(1.2,-0.95)%

\pcline[linecolor=dgrey, linestyle=dashed](3.2,-1.95)(-1.2,-0.95)%

\pcline[linecolor=black, linestyle=dashed](0,-1.5)(.23,-0.81)%
\pcline[linecolor=black](.25,-0.75)(.5,0)%
\pcline[linecolor=black](.5,0)(.4,-.15)%
\pcline[linecolor=black](.5,0)(.5,-.2)%
\dotnode(0,-1.5){OOO}
\uput[0](0.2,-1.1){$Y$}

\pcline[linecolor=black](2,-1.2)(4,-1.7)%
\pcline[linecolor=black](4,-1.7)(3.8,-1.7)%
\pcline[linecolor=black](4,-1.7)(3.9,-1.6)%
\uput[0](4,-1.7){$\nu$}
\dotnode(2,-1.2){OO}

\pcline[linecolor=black](-4,1)(4,-1)%
\pcline[linecolor=black](4,-1)(3.8,-1)%
\pcline[linecolor=black](4,-1)(3.9,-.9)%
\uput[0](4,-1){$x_1$}
\pcline[linecolor=black](2,1)(-2,-1)%
\pcline[linecolor=black](-2,-1)(-1.8,-0.95)%
\pcline[linecolor=black](-2,-1)(-1.9,-0.9)%
\uput[180](-2,-1){$x_2$}
\dotnode(0,0){O}\nput{105}{O}{$(0,0,0)$}

\pcline[linecolor=black](-3.2,.05)(1.2,-.95)%
\pcline[linecolor=black](-1.2,1)(3.2,.05)%

\pcline[linecolor=black](0,-1.5)(-2, -2.5)%
\pcline[linecolor=black](-2,-2.5)(-1.8,-2.45)%
\pcline[linecolor=black](-2,-2.5)(-1.9,-2.4)%
\uput[0](-1.9, -2.65){$\nu$}

\end{pspicture*}
}
\end{center}
\begin{center}
  {\scriptsize \sc Fig. 1 - Regular points for $\partial_{x_1}^2 + x_1 \partial_{x_2} - \partial_t$ on the set $]-1, 1[^{2} \times ]-1, 0[ $.}
\end{center}

The following more refined condition extends the Zaremba cone criterium. Let $\bar U$ be an open set of $\R^N$ and let $\bar t >0$. We denote by $Z_{\bar U, \bar t}(z_0)$ the following \emph{tusk-shaped} cone
\begin{equation*}
 Z_{\bar U, \bar t}(z_0) := \left\{ z_0 \circ D_r(\bar x,- \bar t) \mid \bar x \in \bar U, 0 \le r \le 1\right\}.
\end{equation*}
\begin{theorem}
    \label{Zaremba}
Let $\L$ be an operator in the form \eqref{mmL} satisfying conditions \textbf{(H1), (H2), (H3)}, and assume that the matrix $B$ has the form \eqref{B0}. Consider the Dirichlet problem \eqref{PdD}, and let $z_0 \in \partial \Omega$. If there exist $\bar U$ and $\bar t$ such that $Z_{\bar U, \bar t}(z_0) \cap \overline \O = \{ z_0 \}$, then there exists a local barrier at $z_{0}$.
\end{theorem}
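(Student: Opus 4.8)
\medskip
\noindent\textbf{Sketch of a proof.}
By the Bouligand-type criterion recalled above it suffices to produce a local barrier $w$ at $z_0$; the plan is to take for $w$ a smoothed version of $1$ minus the $\L$-capacitary potential of the excluded tusk, and to deduce the boundary condition $w(z_0)=0$ from the self-similarity of the tusk together with the dilation invariance hidden in the Gaussian bounds \eqref{upper-bound}. We begin with two reductions. First, a sub-tusk of an exterior tusk is again an exterior tusk: if $\bar U'\subseteq\bar U$, $\bar t'\le\bar t$ and $D_0(\sqrt{\bar t/\bar t'}\,)\,\bar U'\subseteq\bar U$ then $Z_{\bar U',\bar t'}(z_0)\subseteq Z_{\bar U,\bar t}(z_0)$, so, shrinking $\bar U$ and $\bar t$, we may assume that $\bar U$ is an arbitrarily small $D_0$-ball, that $\bar t$ is small, and moreover that $\overline{Z_{\bar U,\bar t}(z_0)}\cap\overline\O=\{z_0\}$, i.e. the tusk is strictly exterior. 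Second, the ``constant part'' $\langle B_0x,D\rangle-\p_t$ of $\L$ is left invariant under the translations $\ell_\z$ of $\GG_0$, while \textbf{(H2)}--\textbf{(H3)} and the hypothesis on the tusk are preserved by $z\mapsto z_0\circ z$ (recall $z_0^{-1}\circ Z_{\bar U,\bar t}(z_0)=Z_{\bar U,\bar t}(0)$); hence we may take $z_0=0$. Set $K:=Z_{\bar U,\bar t}(0)$.

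Now fix a $d$-ball $B_R(0)$ and let $V_K$ be the $\L$-capacitary potential of $K$ relative to $B_R(0)$ — furnished by the potential theory for $\L$ developed in \cite{M} (recall $B=B_0$ here), or, concretely, the solution in $B_R(0)\setminus\overline K$ of the Dirichlet problem for $\L$ with datum $1$ on $\p K$ and $0$ on $\p B_R(0)$, extended by $1$ on $K$: thus $0\le V_K\le1$, $V_K=1$ on $K$, $V_K$ is $\L$-harmonic in $B_R(0)\setminus\overline K$ and $\L V_K=-\mu_K$ with $\mu_K\ge0$ carried by $\p K$. Put $w:=1-V_K$, so that $\L w=\mu_K$. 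Since $\mathrm{supp}\,\mu_K\subseteq\p K\subseteq\overline K$ and $\overline K\cap\overline\O=\{0\}$ with $0\notin\O$, we get $\overline K\cap\O=\varnothing$ and hence $\L w=0\le0$ in $\O\cap V$ for every neighborhood $V\subseteq B_R(0)$ of $0$. Off $\overline K$ the function $w$ is $\L$-harmonic, hence of class $C^{2,\a}_{\loc}$ by Theorem~\ref{schauder1}; redefining $w$ inside $K$ by a smooth function — where neither its sign nor the inequality $\L w\le0$ is required — we obtain $w\in C^{2,\a}(V)$. As $0\le w\le1$, the strict positivity $w>0$ on $\overline{\O\cap V}\setminus\{0\}$ follows from the maximum principle: at this point of the paper only the weak form, Theorem~\ref{weak-max-princ}, is available, so this step actually appeals to the strong maximum principle proved later. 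Thus $w$ is the required barrier \emph{provided} $w(0)=0$, i.e. provided $V_K(0)=1$.

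The crux is therefore to show that $K$ is not thin at its vertex $0$ for $\L$. The tusk is self-similar: by \eqref{compatible}, $D(r)K\subseteq K$ for $0<r\le1$, so for a suitable fixed $\mu>1$ the dyadic pieces $K_k:=K\cap\big(B_{\mu^{-k}}(0)\setminus B_{\mu^{-k-1}}(0)\big)$ are dilates $K_k=D(\mu^{-k})K_0$ of one fixed cell $K_0$. On the other hand, because $B=B_0$, the comparison kernels $\G^{+},\G^{-}$ in \eqref{upper-bound} are the fundamental solutions of the \emph{dilation-invariant} operators $\L^{\pm}=\l^{\pm1}\D_{m_0}+Y$, and the constants $c^{\pm}$ there do not depend on the H\"older constant $M$ of \textbf{(H3)}; consequently the $\L$-capacity of $K_k$ is comparable — uniformly in $k$ — to the $\L^{+}$-capacity of $K_k$, which by the homogeneity $\G^{+}(D(r)\,\cdot\,)=r^{-Q}\G^{+}(\,\cdot\,)$ ($Q$ as in \eqref{hom-dim}) equals $\mu^{-kQ}$ times the $\L^{+}$-capacity of $K_0$. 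Hence, after the usual normalisation by $\mu^{kQ}$, the piece $K_k$ contributes at least a fixed positive amount to the corresponding Wiener-type series, so that this series diverges; and iterating the ensuing scale-invariant decay estimate $\sup_{\p B_{\mu^{-k-1}}(0)\cap\O}w\le(1-\th)\sup_{\p B_{\mu^{-k}}(0)\cap\O}w$, with $\th\in(0,1)$ independent of $k$ (this estimate being a consequence of the uniform capacity lower bound for $K_k$, of $w=0$ on $K$, and of the maximum principle), yields $\sup_{\p B_{\mu^{-k}}(0)\cap\O}w\to0$, whence $w(0)=0$.

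The genuinely hard part — and the only place where the hypothesis $Z_{\bar U,\bar t}(z_0)\cap\overline\O=\{z_0\}$ and the shape of the tusk are used — is this last step: obtaining the $M$-independent uniform lower bound on the $\L$-capacity of the $K_k$'s and converting it, via the decay iteration, into $w(0)=0$. The decisive device is that every estimate must be routed through the dilation-invariant model kernels $\G^{\pm}$, which is exactly what makes the two-sided bound \eqref{upper-bound} usable here, since the coefficients $a_{ij},b_j$ themselves do not rescale under $D(r)$. By contrast, the reductions of the first paragraph and the regularity/positivity bookkeeping of the second are routine once Theorems~\ref{regularity-fun} and~\ref{schauder1} and the maximum principle are available.
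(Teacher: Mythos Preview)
The paper does not prove this theorem; it is stated as a result of Manfredini \cite{M} and no argument is given in the text. So there is nothing here to compare your sketch against directly.

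That said, your strategy --- take $w=1-V_K$ with $V_K$ the capacitary (reduced) function of the exterior tusk $K$, then exploit the self-similarity $D(r)K\subseteq K$ together with the dilation-homogeneous bounds \eqref{upper-bound} on $\G^{\pm}$ to obtain a scale-invariant decay estimate for $w$ near the vertex --- is indeed the line taken in \cite{M}. Two points deserve attention.

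\emph{Regularity of the barrier.} The survey's stated definition of local barrier requires $w\in C^{2,\a}(V)$ on a \emph{full} neighborhood $V$ of $z_0$. Your $w$ is $\L$-harmonic, hence $C^{2,\a}$, only on $V\setminus\overline K$; since $\p K$ meets every neighborhood of the vertex and the capacitary potential has no reason to be $C^{2,\a}$ across $\p K$, your construction cannot satisfy this definition literally. Redefining $w$ inside the open set $K$ does nothing at $\p K$. In \cite{M} the argument is carried out in the Perron--Wiener--Brelot axiomatic framework, where a barrier is only required to be \emph{superharmonic}; the $C^{2,\a}(V)$ formulation written in this survey is a convenient simplification adequate for Theorem~\ref{Fichera} (where an explicit smooth barrier is available) but not for the tusk criterion. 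Your argument becomes correct once you work with the superharmonic notion of barrier.

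\emph{The scaling step.} You correctly identify that the substantive difficulty is the uniform-in-$k$ lower bound on the (normalised) $\L$-capacity of the dyadic pieces $K_k$, and that this must be routed through the dilation-invariant comparison kernels $\G^{\pm}$ because the coefficients $a_{ij}$ do not rescale. However, your parenthetical claim that the constants $c^{\pm}$ in \eqref{upper-bound} ``do not depend on the H\"older constant $M$'' is neither asserted in Theorem~\ref{regularity-fun} nor needed: you never rescale the operator $\L$, only the set, so the same $c^{\pm}$ apply at every scale. The passage from the pointwise bounds \eqref{upper-bound} to capacity bounds still needs an argument (e.g.\ represent $V_{K_k}$ as a $\G$-potential of the equilibrium measure and compare), which you have not supplied.
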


Theorems \ref{Fichera} and \ref{Zaremba} have been first proved in \cite{M} assuming that the matrix $B$ has the form \eqref{B0}, this assumption has been removed from Theorem \ref{Fichera} in \cite{PODF}. We also recall the work \cite{LascialfariMorbidelli} by Lascialfari and Morbidelli, where a quasilinear problem is considered, and the article \cite{kogoj} by Kogoj for a complete treatment of the potential theory in the study of the Dirichlet problem for a general class of evolution hypoelliptic equations.

Recently, Kogoj, Lanconelli and Tralli prove in \cite{kogojlanconellitralli} a characterization of the $\L-$regular boundary points for constant coefficients operators $\L$ of the form \eqref{cost-op}. Their main result is stated in terms of a series involving $\L-$potentials of regions contained in $\R^{N+1} \setminus \O$, within different level sets of $\G$, the fundamental solution of $\L$. Specifically, if $F$ is a compact subset of $\R^{N+1}$, then $V_{F}$ denotes the $\L-$equilibrium potential of $F$. That is, 
\begin{equation}
		V_{F}(z) = \lim \inf \limits_{\z \rightarrow z} W_{F}(\z), \qquad z \in \R^{N+1},
\end{equation}
where if $\overline{\L}(\R^{N+1})$ denotes the family of $\L-$super harmonic functions in $\R^{N+1}$
\begin{equation}
		W_{F} := \inf \left\{ v: \, v \in \overline{\L}(\R^{N+1}), v \ge 0 \, {\rm in} \, \R^{N+1},
		v \ge 1 \, {\rm in} \, F \right\}.
\end{equation}
Moreover, for given $\mu \in ]0,1[, z_{0} \in \partial \O$, and for every positive integer $k$ we denote by $\O_{k}^{c}(z_{0})$ the set
\begin{equation*} 
	\O_{k}^{c}(z_{0}) := \Big\{ z \in \R^{N+1} \setminus \Omega \; \mid \; \big(\tfrac{1}{\mu} \big)^{k \log k} \le \G(z_{0}; z) \le \big(\tfrac{1}{\mu} \big)^{(k+1) \log (k+1)} \Big\}.
\end{equation*}
We then have (Theorem 1.1, \cite{kogojlanconellitralli}).
\begin{theorem}
Let $\L$ be an hypoelliptic operator in the form \eqref{cost-op}, let $\O$ be a bounded open subset of $\R^{N+1}$ and let $z_{0} \in \p \O$. Then $z_{0}$ is $\L-$regular for $\p \O$ if and only if
	\begin{equation}
		\sum \limits_{k=1}^{+\infty} V_{\O^{c}_{k}(z_{0})}(z_{0}) =+ \infty.
	\end{equation}
\end{theorem}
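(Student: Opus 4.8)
\medskip
\noindent
{\sc Sketch of the proof.}
Since $\L$ is of the form \eqref{cost-op} and hypoelliptic, it belongs to the class $\lL$, and by the potential theory for such operators developed in \cite{kogoj} (see also \cite{M}) the space $\R^{N+1}$ equipped with the sheaf of $\L$-harmonic functions is a harmonic space in which the $d$-balls $B_{r}(z)$ form a basis of $\L$-regular sets; in particular the notions of $\L$-superharmonic function, reduced function and balayage, equilibrium potential $V_{F}$ of a compact set $F$, and \emph{thinness} of a set at a point are all available. The argument rests on the classical equivalence, valid in this setting,
\begin{equation*}
z_{0} \ \text{is $\L$-regular for $\partial\O$}
\qquad\Longleftrightarrow\qquad
F:=\R^{N+1}\setminus\O \ \text{is not thin at $z_{0}$}.
\end{equation*}
Thus the theorem is equivalent to the assertion that $F$ is thin at $z_{0}$ if and only if the Wiener series $\sum_{k}V_{\O_{k}^{c}(z_{0})}(z_{0})$ converges, and the first step is to record this reduction and to check, by an elementary homogeneity computation, that the convergence of the series is independent of the chosen $\mu\in(0,1)$.

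The geometric core is the decomposition of $F$ into the pieces $F_{k}:=\O_{k}^{c}(z_{0})$, sitting in the consecutive ``annuli'' $\{(1/\mu)^{k\log k}\le\G(z_{0};\cdot)\le(1/\mu)^{(k+1)\log(k+1)}\}$ of level sets of $\G(z_{0};\cdot)$. Since $\G(z_{0};z)\to+\infty$ only as $z\to z_{0}$, the set $\{\G(z_{0};\cdot)\ge1\}$ is a bounded neighbourhood of $z_{0}$ and $\bigcup_{k\ge1}F_{k}=F\cap\{\G(z_{0};\cdot)\ge1\}$, so that --- thinness being a local property at $z_{0}$ --- no relevant part of $F$ is lost in the decomposition. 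The exponent $k\log k$ is the crucial calibration: combining the two-sided comparison $\G\asymp\G_{0}$ of Theorem \ref{teo31} (which becomes sharper and sharper on the level sets $\{\G_{0}\ge K\}$ as $K\to+\infty$), the homogeneity \eqref{Gamma0-hom} of $\G_{0}$ under the dilations $D(r)$, and the invariant Harnack inequality for $\L$, one shows that consecutive annuli are uniformly separated in the quasi-distance $d$, and hence that $V_{F_{k}}(z_{0})$ is comparable --- with multiplicative constants \emph{independent of $k$} --- to the value at $z_{0}$ of the reduced function of the constant $1$ over $F_{k}$. This is the non-Euclidean counterpart of the classical comparison $\mathrm{cap}(F\cap A_{k})/\mathrm{cap}(\overline{B}_{2^{-k}})\asymp\widehat{R}^{\,F\cap A_{k}}_{1}(z_{0})$.

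Granted these estimates, the two implications follow along by-now-standard lines. For ``$\sum_{k}V_{F_{k}}(z_{0})<+\infty$ implies $F$ thin at $z_{0}$'' one builds the separating function explicitly: for $k_{0}$ large and suitable constants $a_{k}\ge1$, the function $w:=\sum_{k\ge k_{0}}a_{k}V_{F_{k}}$ is a nonnegative $\L$-superharmonic function satisfying $w\ge1$ on $F$ in a neighbourhood of $z_{0}$, while $w(z_{0})=\sum_{k\ge k_{0}}a_{k}V_{F_{k}}(z_{0})$ can be made as small as we please by enlarging $k_{0}$; such a $w$ witnesses the thinness of $F$ at $z_{0}$. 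Conversely, if $F$ is thin at $z_{0}$ then the balayage satisfies $\widehat{R}^{\,F}_{1}(z_{0})<1$; writing $E_{j}:=\bigcup_{k_{0}\le k\le j}F_{k}$, telescoping the increments $\widehat{R}^{\,E_{j}}_{1}-\widehat{R}^{\,E_{j-1}}_{1}$ and bounding each of them from below by $c\,V_{F_{j}}$ up to a summable interaction error (again using the uniform separation of the annuli), one arrives at $\sum_{k}V_{F_{k}}(z_{0})\le C\,\widehat{R}^{\,F}_{1}(z_{0})+C<+\infty$. Combining the two implications with the potential-theoretic equivalence above yields the theorem.

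The main obstacle is exactly the uniform-in-$k$ comparison between the equilibrium potentials $V_{F_{k}}$ and the reduced functions of $F_{k}$ along the level sets of $\G(z_{0};\cdot)$. It forces the simultaneous use of the invariant (quasi-)metric and dilation structure of $\GG$, the sharp bounds of Theorem \ref{teo31}, Harnack chains at all scales, and --- a difficulty absent from the self-adjoint elliptic theory --- the lack of symmetry of $\L$: potentials of $\L$ and of its formal adjoint $\L^{*}$ both occur and must be reconciled through the reproduction property of $\G$ and a duality between direct and adjoint equilibrium potentials. Checking that the rate $k\log k$ is exactly right --- fast enough to make all separation and comparison constants uniform, yet slow enough that $\bigcup_{k}F_{k}$ still fills a full neighbourhood of $z_{0}$ in $F$ --- is the delicate calibration on which the whole argument hinges.
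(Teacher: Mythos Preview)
The paper does not contain a proof of this theorem. It is a survey, and the result is merely quoted from \cite{kogojlanconellitralli} (explicitly attributed as ``Theorem 1.1, \cite{kogojlanconellitralli}''), with no argument given beyond the definitions of $V_{F}$ and $\O_{k}^{c}(z_{0})$. So there is nothing in this paper to compare your sketch against.

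That said, your outline follows the expected architecture for a Wiener--Landis criterion in an abstract harmonic space: reduce regularity to non-thinness of the complement, decompose the complement into annuli between level sets of the fundamental solution, and show that the series of equilibrium potentials diverges if and only if the complement is not thin. This is indeed the strategy of \cite{kogojlanconellitralli}. Two cautions if you intend to turn the sketch into a proof. First, your invocation of Theorem \ref{teo31} and the dilation structure is only available when $B=B_{0}$; for a general constant-coefficient hypoelliptic $\L$ of the form \eqref{cost-op} the group need not be homogeneous, and the uniform-in-$k$ comparison of potentials on successive annuli must be obtained directly from the explicit Gaussian form \eqref{fun-sol2} of $\G$ rather than from scaling. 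Second, the claim that ``$\G(z_{0};z)\to+\infty$ only as $z\to z_{0}$'' and that $\{\G(z_{0};\cdot)\ge 1\}$ is a bounded neighbourhood of $z_{0}$ is not automatic for non-dilation-invariant operators (cf.\ the remark following Theorem \ref{teo31} and the discussion around \eqref{stime}); this requires a separate argument controlling the behaviour of $\det C(t)$ and of the quadratic form $\langle C^{-1}(t)x,x\rangle$.
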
	
We remark that this criterion is sharper than the Zaremba cone condition, moreover it provides us with a \emph{necessary} regularity condition. On the other hand, it only applies to constant coefficients operators in the form \eqref{cost-op}. 

\medskip

\section{Mean value formulas, Harnack inequalities and Strong Maximum Principle}
\label{sec4}
In the first part of this section we consider \emph{divergence form operators} acting on functions $u = u(x,t) \in C^{2,\alpha}(\Omega)$ as follows
\begin{equation}
    \label{divform}
    \L u = \sum \limits_{i,j=1}^{m_0} \p_{x_i} \left(a_{ij}(x,t) \p_{x_j} u \right) \, + \, 
        \sum \limits_{j=1}^{m_0} b_{j}(x,t) \p_{x_j} u \, + \,
        \langle B x, D u \rangle - \, \p_t u,
\end{equation}
under the structural assumptions \textbf{(H1), (H2), (H3)}. Moreover, we suppose the following additional assumption for the first order derivatives holds true:
\begin{itemize}
	\item[\textbf{(H4)}] for every $i,j = 1, \ldots, m_{0}$ the derivatives $\p_{x_{i}} a_{ij}(x,t)$ $\p_{x_{j}} b_{j}(x,t)$ exist and are bounded H\"older continuous functions of the exponent $\a$ in \textbf{(H3)}. 
\end{itemize}
The reason to consider classical solutions to divergence form operators is that the adjoint $\L^*$ of $\L$ is well defined 
and the function $\G^*(x,t,\xi,\t) = \G(\xi,\t, x,t)$ build via the parametrix method is the fundamental solution of $\L^*$. 


\subsection{Mean value formula}
The mean value formula we present here is based on the Green's identity and on the fundamental solution to $\L$ and is derived in the same way as for the classic parabolic case. In order to give the precise statement we need to introduce some notation. 
For every $r>0$ and for every $z_{0} \in \R^{N+1}$, we denote by $\O_{r}(z_{0})$ the \textit{super-level set of the fundamental solution} $\G$ of $\L$ defined as
\begin{equation}
	\label{super-level}
	\O_{r}(z_{0}) := \left\{ z \in \R^{N+1} \; \mid \; \G(z_{0}; z) > \tfrac1r \right\}.
\end{equation}
We remark that $\G$ is constructed via the parametrix method as the sum of a series of functions (see \eqref{parametrix} and \eqref{parametrix-G}), then the definition of the set $\O_{r}(z_{0})$ is implicit. However the parametrix method provides us with the following local estimate, useful to identify $\O_{r}(z_{0})$. For every $\e > 0$ there exists a positive $K$ such that
 \begin{equation}
		\label{stime}
		(1 - \e) \, Z (z_{0}, \z) \, \le \G(z_{0}, \z) \, \le \, (1 + \e ) \, Z (z_{0}, \z)
\end{equation}
for every $\z \in \R^{N+1}$ with $Z(z_{0}, \z) \ge K$, where $Z$ is the fundamental solution associated to the operator
$\L_{\z}$ defined in \eqref{ellez} and its explicit expression is available. Moreover, every super-level set of $Z$ is bounded whenever $B$ has the form \ref{B0}. This fact and Theorem \ref{teo31} imply that $\O_{r}(z_{0})$ is bounded for every sufficiently small positive $r$.

Mean value forulas for constant coefficients operators in the form \eqref{cost-op} have been proved by Kuptsov \cite{K3}, 
Garofalo and Lanconelli \cite{GL}, then by Lanconelli and Polidoro \cite{LP}. Later on, Polidoro considers operators $\L$ with H\"older continuous coefficients. Thus, we recall here Proposition 5.1 of \cite{P2}.


\begin{theorem}
	\label{mean-value-formula} 
Let $\L$ be an operator in the form \eqref{mmL} satisfying conditions \textbf{(H1), (H2), (H3)}, and assume that the matrix $B$ has the form \eqref{B0}. Let $u$ be a solution to $\L u = 0$ on $\O$. Then, for every $z_{0} \in \O$ such that $\overline{\O_{r}(z_{0})} \subset \O$, we have
	\begin{equation*}
		u(z_{0}) = \frac1r \; \int \limits_{\O_{r}(z_{0})} M(z_{0}; z ) \, u(z) \, dz. 
	\end{equation*}
Here
\begin{equation}
	\label{mean}
	M(z_{0}; z) = \frac{\langle A(z) \, D_{x} \G (z_{0}; z) \, , \, D_{x} \G (z_{0}; z) \rangle }{\G^{2}(z_{0}; z)}.
\end{equation}
\end{theorem}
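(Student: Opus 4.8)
The plan is to derive the mean value formula from the Green identity associated to the operator $\L$ in divergence form, in the same spirit as the classical parabolic case, using the super-level sets of the fundamental solution as the ``$\L$-balls''. First I would write $\L = \div(AD) + \langle Bx, D\rangle - \p_t$ and recall the divergence-form Green identity: for sufficiently smooth functions $u, v$ on a bounded domain $D$ with smooth boundary,
\begin{equation*}
  \int_D \big( v \L u - u \L^* v \big) \, dz = \int_{\p D} \langle \, \M(u,v), \nu \, \rangle \, d\sigma,
\end{equation*}
where $\L^*$ is the formal adjoint, $\nu$ the outer Euclidean normal, and $\M(u,v)$ a suitable bilinear ``flux'' vector field built from $A$, $B$, $u$, $v$ and their first-order $x$-derivatives (plus the transport and time terms). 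Then I would take $v(z) = \G(z_0; z) - \tfrac1r$, which by the choice $v = 0$ on $\p \O_r(z_0)$ and $\L^* v = \L^* \G(z_0; \cdot) = -\delta_{z_0}$ on $\O_r(z_0)$ (the defining property of the fundamental solution of $\L^*$, legitimate here because \textbf{(H4)} guarantees $\L^*$ is well defined and $\G^*(\cdot) = \G(z_0; \cdot)$ is its fundamental solution with pole at $z_0$) reduces the left-hand side, when $\L u = 0$, to $u(z_0)$.

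Next I would compute the boundary flux. On $\p \O_r(z_0) = \{ \G(z_0; \cdot) = \tfrac1r \}$ the function $v$ vanishes, so every term in $\langle \M(u,v), \nu\rangle$ containing an undifferentiated $v$ drops, and one is left only with the terms carrying $D_x v = D_x \G(z_0; \cdot)$. Since $\p \O_r(z_0)$ is a level set of $\G(z_0; \cdot)$, the Euclidean surface measure and the gradient $D\G$ are parallel, and one can rewrite the remaining boundary integral, via the coarea formula applied to the family of level sets $\{\G(z_0; \cdot) = \tfrac1\rho\}_{\rho}$ foliating $\O_r(z_0)$, as a solid integral over $\O_r(z_0)$. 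The Jacobian factor produced by the coarea change of variables, combined with the explicit form of the flux, yields exactly the kernel
\begin{equation*}
  M(z_0; z) = \frac{\langle A(z) D_x \G(z_0; z), D_x \G(z_0; z)\rangle}{\G^2(z_0; z)},
\end{equation*}
and the outer constant $\tfrac1r$ appears from integrating the level-set parameter from $\tfrac1r$ to $+\infty$ (equivalently, from the normalization $\int_{\O_r} M \, dz = r$, which also follows by applying the same identity to $u \equiv 1$). To make all of this rigorous one should first carry out the computation with $\O_r(z_0)$ replaced by a slightly smaller super-level set $\O_\rho(z_0)$ with $\rho < r$ chosen so that $\p \O_\rho(z_0)$ is a smooth manifold and avoids $z_0$, apply the classical Green identity there, and then let $\rho \uparrow r$; the hypothesis $\overline{\O_r(z_0)} \subset \O$ ensures $u$ and its relevant derivatives are controlled up to the closure, and boundedness of $\O_r(z_0)$ (from the estimate \eqref{stime}, the boundedness of super-level sets of $Z$ when $B = B_0$, and Theorem \ref{teo31}) makes the integrals finite.

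The main obstacle is the regularity of the level sets $\p \O_r(z_0)$ and the behaviour of $\G(z_0; \cdot)$ near the pole $z_0$: because $\G$ is built by the parametrix series \eqref{parametrix}--\eqref{parametrix-G} rather than given in closed form, one cannot directly assert that $\tfrac1r$ is a regular value of $\G(z_0; \cdot)$, nor that $D\G$ is nonvanishing on the level set. The way around this is to use the comparison \eqref{stime} with the explicit Gaussian kernel $Z$ — whose super-level sets are smooth manifolds with non-degenerate gradient for $B = B_0$ — to handle a neighbourhood of $z_0$, together with Sard's theorem (for almost every $r$ the level set is smooth) and an approximation/limiting argument to recover the statement for the given $r$; the singularity of $D\G$ at $z_0$ is integrable against $M$ because $M(z_0; z)$ has the same homogeneity as $\G_0(z_0; z)$ near the pole, which is locally integrable. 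Aside from this point, the argument is a bookkeeping exercise: identifying the correct flux vector field $\M(u,v)$ for the divergence-form operator with drift $\langle Bx, D\rangle$ and transport $-\p_t$, and checking that the drift and transport contributions to the boundary term vanish on the level set (they do, since they are multiples of $v$).
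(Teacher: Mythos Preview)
Your approach is the one the paper points to. Note, however, that the paper does not actually supply a proof of Theorem~\ref{mean-value-formula}: it is quoted from Proposition~5.1 of \cite{P2}, and the only methodological remark offered is that the formula ``is based on the Green's identity and on the fundamental solution to $\L$ and is derived in the same way as for the classic parabolic case.'' Your outline --- Green's identity for the divergence-form operator with $v$ built from the fundamental solution of $\L^*$, followed by a passage from surface to solid mean via the coarea formula --- is exactly the argument carried out in \cite{P2} (and, in the constant-coefficient setting, in \cite{GL} and \cite{LP}). You are also right that \textbf{(H4)} is implicitly in force, since both the divergence structure and the identity $\G^{*}(z_0;\cdot)=\G(\cdot;z_0)$ require it; the paper imposes \textbf{(H4)} at the beginning of Section~4 precisely for this reason.

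One point in your write-up deserves tightening: the coarea formula does not convert a \emph{single} boundary integral on $\partial\O_r(z_0)$ into a solid integral. What actually happens is that the Green identity with $v=\G(z_0;\cdot)-\tfrac1\rho$ produces, for every $\rho\in(0,r)$, a \emph{surface} mean value formula on $\partial\O_\rho(z_0)$ with kernel $\langle A D_x\G,D_x\G\rangle/|\nabla\G|$. One then integrates this one-parameter family of identities over $\rho\in(0,r)$; the change of variable $s=1/\rho$ replaces $d\rho$ by $s^{-2}\,ds$, and since $s=\G$ on the level set $\{\G=s\}$ this produces the factor $\G^{-2}$ in $M$. The coarea formula, applied to the foliation $\{\G(z_0;\cdot)=s\}_{s>1/r}$ of $\O_r(z_0)$, then collapses the iterated integral into the solid integral $\tfrac1r\int_{\O_r(z_0)} M\,u\,dz$. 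With this correction your sketch is complete; the Sard-type and integrability considerations you raise are dealt with in \cite{P2} along the lines you describe.
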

As in Theorem \ref{existence1}, the assumption that the matrix $B$ has the form \eqref{B0} has been introduced to simplify the problem and seems to be unnecessary. We finally remark that mean value formulas analogous to the one stated in Theorem \ref{mean-value-formula}, where the kernel \eqref{mean} is replaced by a bounded continuous one, have been proved in \cite{K3}, \cite{GL}, \cite{LP} and \cite{P2}. Lastly, we recall a recent paper by Cupini and Lanconelli \cite{CupiniLanconelli}, where the authors give a general proof of Mean Value formulas for solutions to linear second order PDEs, only based on the local properties of the fundamental solution.

\subsection{Harnack inequality}
The first proofs of Harnack type inequalities for Kolmogorov operators have been derived using mean value formulas, and are due to Kuptsov \cite{K3} \cite{KU2}. This result has been improved by Garofalo and Lanconelli (see Theorem 1.1 in \cite{GL}) for some specific constant coefficients operators of the type \eqref{cost-op}. Their approach follows the ideas introduced for the heat equation by Pini \cite{Pini} and Hadamard \cite{HAD} in their seminal works.  Later on, Lanconelli and Polidoro proved the Harnack inequality for every operator \eqref{cost-op} considered in Section 2. The statement of this result requires a further notation. For every positive $\e$ we denote
\begin{equation}
	\label{super-level-K}
	K_{r}(z_{0}, \e) := \O_{r}(z_{0}) \cap \left\{ (x,t) \in \R^{N+1} \mid t \le t_0 - \e r^{2/Q} \right\}.
\end{equation}
We recall here Theorem 5.1 in \cite{LP}. 

\begin{theorem}
	\label{teo13}
Let $\L$ be an operator of the form \eqref{cost-op} satisfying the equivalent conditions of Proposition \ref{equivalence}. Then there exist three positive constants $c, r_{0} >0$ and $\e$, only dependent on $\L$, such that
\begin{equation}
 	\label{harnack-levelsets}
	\sup \limits_{z \in K_{r}(z_{0}, \e)} u(z) \, \le c u(z_{0}),
\end{equation}
for every non negative solution $u$ to $\L u=0$ in an open subset $\O$ of $\R^{N+1}$, for every $z_{0} \in \O$ such that $\overline{\O_{2r}(z_{0})} \subset \O$ and for every $r \in ]0,r_{0}[$.
\end{theorem}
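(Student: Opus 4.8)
The plan is to derive \eqref{harnack-levelsets} from the mean value representation of Theorem \ref{mean-value-formula}, following the scheme of Pini and Hadamard for the heat equation but carried out in the intrinsic geometry of the group $\GG$. I would start with two reductions. By the left invariance \eqref{ell} I may take $z_{0}=(0,0)$. To keep the constants uniform I exploit the behaviour of the super-level sets under dilations: if $B$ has the form \eqref{B0}, then $D(\rho)$ carries $\O_{s}(0)$ onto $\O_{\rho^{Q}s}(0)$ and the truncated sets $K_{s}(0,\e)$ transform compatibly, so it suffices to treat one value of $r$; for a general operator of the class $\lL$ one uses instead Theorem \ref{teo31}, which for small $r$ traps $\O_{r}(0)$ and $\O_{2r}(0)$ between super-level sets of the homogeneous fundamental solution with comparable parameters, so that the relevant geometry is, uniformly for $r\in\,]0,r_{0}[$, that of the homogeneous model. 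This is where the threshold $r_{0}$ comes from.

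Next I would extract a \emph{weak Harnack inequality}. Since $\overline{\O_{2r}(z_{0})}\subset\O$, the mean value formula (Theorem \ref{mean-value-formula}, whose restriction $B=B_{0}$ is inessential) gives $u(z_{0})=\tfrac{1}{2r}\int_{\O_{2r}(z_{0})}M(z_{0};z)\,u(z)\,dz$, and, since both $u$ and the kernel $M$ in \eqref{mean} are non-negative, restricting the integral to any measurable $E\subset\O_{2r}(z_{0})$ yields $\int_{E}M(z_{0};z)\,u(z)\,dz\le 2r\,u(z_{0})$. Writing $x'=(x_{1},\dots,x_{m_{0}})$, one checks that $M(z_{0};\cdot)=\langle A_{0}\,D_{x'}\log\G(z_{0};\cdot),\,D_{x'}\log\G(z_{0};\cdot)\rangle$, which is continuous and strictly positive on $\O_{2r}(z_{0})$ except at the pole $z_{0}$ and on the closed set $\Sigma_{z_{0}}=\{\,D_{x'}\G(z_{0};\cdot)=0\,\}$, a set of codimension at least $m_{0}\ge 1$. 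Taking $E$ to be a fixed ``core'' $\mathcal{C}(z_{0})\subset\O_{r}(z_{0})\cap\{t\le t_{0}-\e r^{2/Q}\}$ at positive distance from $z_{0}$ and from $\Sigma_{z_{0}}$, I obtain $\int_{\mathcal{C}(z_{0})}u\,dz\le C\,u(z_{0})$, with $C$ depending only on $\L$ once the normalization is in force.

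To upgrade this integral bound to a pointwise one I would use the local estimate $\sup_{\mathcal{C}'}u\le C\int_{\mathcal{C}}u\,dz$ for non-negative solutions of $\L u=0$, $\mathcal{C}'\subset\subset\mathcal{C}$ — available from standard regularity arguments for hypoelliptic operators (combining Theorem \ref{schauder1} with a Caccioppoli-type inequality, or iterating the mean value formula on a telescoping family of super-level sets; one may also use the variants of Theorem \ref{mean-value-formula} with bounded kernels). This produces a \emph{local Harnack inequality} $\sup_{\mathcal{C}'(z_{0})}u\le C\,u(z_{0})$ on a fixed core $\mathcal{C}'(z_{0})\subset K_{r}(z_{0},\e)$. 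Finally, any point of $K_{r}(z_{0},\e)$ is joined to $z_{0}$ by a finite ``Harnack chain'' $z_{0}=w_{0},w_{1},\dots,w_{k}$ with $w_{i+1}\in\mathcal{C}'(w_{i})$; iterating the local Harnack along the chain gives $u(w_{k})\le C^{k}u(z_{0})$. Since the cores can be chosen so as to avoid the varying degeneracy sets $\Sigma_{w_{i}}$ (possible, as these have codimension $\ge 1$), and since the scaling/comparison of the first step makes the shapes of the sets $\O_{s}(z)$ uniform, the chain length $k$ is bounded by a constant depending only on $\L$ and $\e$, and \eqref{harnack-levelsets} follows with $c=C^{k}$.

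The delicate point is the third step. The mean value formula only delivers an $L^{1}$-type inequality whose kernel $M$ both blows up at the pole $z_{0}$ and \emph{degenerates} on the submanifold $\Sigma_{z_{0}}$, so producing from it a pointwise bound that is uniform over all of $K_{r}(z_{0},\e)$ — in particular at points of $\Sigma_{z_{0}}$, where a single use of the formula gives no information — is exactly what forces the chaining argument together with a scale-invariant analysis of the shapes of $\O_{r}(z_{0})$ and $K_{r}(z_{0},\e)$. For operators that are not dilation invariant one must, in addition, push the whole argument through Theorem \ref{teo31}, which is why the conclusion is local in the radius.
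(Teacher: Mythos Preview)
The paper does not supply its own proof of this theorem; it is quoted from Lanconelli and Polidoro \cite{LP} (their Theorem 5.1), with the surrounding text indicating only that the approach is the Pini--Hadamard scheme based on the mean value formula of Theorem \ref{mean-value-formula}. Your outline is therefore consonant with what the paper describes: reduce by left invariance, use the mean value representation to extract an integral lower bound for $u(z_{0})$, pass to a pointwise bound, and propagate by chains.

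That said, your step from the $L^{1}$ estimate $\int_{\mathcal{C}(z_{0})} u \le C\,u(z_{0})$ to the pointwise estimate $\sup_{\mathcal{C}'(z_{0})} u \le C\,u(z_{0})$ is not justified by the tools you name. Theorem \ref{schauder1} controls $|u|_{2+\a}$ by $\sup|u|$, which goes the wrong way, and a Caccioppoli inequality controls derivatives by $L^{2}$ of $u$; neither yields an $L^{\infty}$--$L^{1}$ bound for non-negative solutions. The route taken in \cite{LP} (and in \cite{GL}, \cite{K3}) is instead to \emph{iterate the mean value formula itself}: one applies it at a second base point $\z\in\O_{r}(z_{0})$, with a smaller radius chosen so that $\O_{r'}(\z)\subset\O_{2r}(z_{0})$, and uses the positivity of the kernels to compare $u(\z)$ directly with $u(z_{0})$. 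You mention this alternative in passing (``iterating the mean value formula on a telescoping family of super-level sets'', or using the bounded-kernel variants noted after Theorem \ref{mean-value-formula}); that is the argument to commit to, and the Schauder route should be dropped.

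A minor remark on the non-dilation-invariant case: your use of Theorem \ref{teo31} is correct in spirit, but recall that \eqref{bounds} holds only on $\{\G_{0}\ge K\}$, so one must check that this region contains the relevant sets $\O_{2r}(z_{0})$ for $r$ small --- which it does, and this is indeed the origin of the threshold $r_{0}$, as you say.
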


The same result has been proved in \cite{P2} for variable coefficients operators \eqref{divform} satisfying \textbf{(H1)-
(H4)}, with $B$ in the form \eqref{B0}. We point out that the geometry of the above Harnack inequality is quite 
complicated. The natural analogy between the parabolic case and the Kolmogorov case is restored in \cite{LP}, where 
the Harnack inequality is written in terms of \emph{cylinders} (see equation \eqref{harnack-LP} below). Here and in the 
following, we consider the \textit{unit box} $\H$ defined as
\begin{equation}
	\label{unit-box}
	 \H = ]-1,1[^{N} \times ]-1,0[.
\end{equation}
Moreover, for given constants $\alpha, \beta, \gamma, \delta$ with $0 < \alpha < \beta < \gamma < 1$ and $0 < \d < 1$, 
we set
\begin{equation}
	\label{unit-box+-}
	 \H^+ = D_0(\delta)\left(]-1,1[^{N}\right) \times ]-\alpha,0[, \qquad 
	 \H^- = D_0(\delta)\left(]-1,1[^{N}\right) \times ]-\gamma,-\beta[.
\end{equation}

\begin{center}
	\scalebox{0.9}{
 		\begin{pspicture*}(-6,-3)(8,1.5)
			\pcline[linecolor=dgrey](-3,0)(1,-1)%
			\pcline[linecolor=dgrey](-3,-2)(1,-3)%
			\pcline[linecolor=dgrey](3,0)(-1,1)%
			\pcline[linecolor=dgrey, linestyle=dashed](3,-2)(-1,-1)%
			\pcline[linecolor=dgrey](3,0)(3,-2)%
			\pcline[linecolor=dgrey](-3,0)(-3,-2)%
			\pcline[linecolor=dgrey](1,-1)(1,-3)%
			\pcline[linecolor=dgrey, linestyle=dashed](-1,1)(-1,-1)%
			\pcline[linecolor=dgrey](3,0)(1,-1)%
			\pcline[linecolor=dgrey](-3,0)(-1,1)%
			\pcline[linecolor=dgrey](3,-2)(1,-3)%
			\pcline[linecolor=dgrey, linestyle=dashed](-1,-1)(-3,-2)
			\pcline[linecolor=dgrey](0,-2)(0,1.4)
			\pcline[linecolor=dgrey](0,1.4)(-.05,1.2)%
			\pcline[linecolor=dgrey](0,1.4)(.05,1.2)%
			\uput[0](0,1.4){$t$}
			\pspolygon[fillstyle=solid,fillcolor=lgrey](0.8,-0.12)(0.6,-0.22)(-0.8,0.12)(-0.6,0.22)
			\pspolygon[fillstyle=solid,fillcolor=lgrey](-0.8,0.12)(-0.8,-0.43)(0.6,-0.77)(0.6,-0.22)
			\pspolygon[fillstyle=solid,fillcolor=lgrey](0.8,-0.12)(0.6,-0.22)(0.6,-0.77)(0.8,-0.67)
			\pspolygon[fillstyle=solid,fillcolor=lgrey](0.8,-1.12)(0.6,-1.22)(-0.8,-0.88)(-0.6,-0.78)
			\pspolygon[fillstyle=solid,fillcolor=lgrey](-0.8,-0.88)(-0.8,-1.43)(0.6,-1.77)(0.6,-1.22)
			\pspolygon[fillstyle=solid,fillcolor=lgrey](0.8,-1.12)(0.6,-1.22)(0.6,-1.77)(0.8,-1.67)
			\pcline[linecolor=dgrey,linestyle=dashed](0,-2)(0,1.4)
			\pcline[linecolor=dgrey](-4,1)(4,-1)%
			\pcline[linecolor=dgrey](4,-1)(3.8,-1)%
			\pcline[linecolor=dgrey](4,-1)(3.9,-.9)%
			\uput[0](4,-1){$x_1$}
			\pcline[linecolor=dgrey](2,1)(-2,-1)%
			\pcline[linecolor=dgrey](-2,-1)(-1.8,-1)%
			\pcline[linecolor=dgrey](-2,-1)(-1.9,-.9)%
			\uput[180](-2,-1){$x_2$}
			\uput[90](-1.8,0.8){$\H$}
			\uput[60](0.8,-0.25){$\H^+$}
			\uput[300](-1,-1.5){$\H^-$}
	    \end{pspicture*}
	}
\end{center}
\begin{center}
  {\scriptsize \sc Fig. 3 - Harnack inequality.}
\end{center}

Based on the translation and on the dilation respectively defined in \eqref{law} and \eqref{fam-dil}, we introduce for every 
$r>0$ the cylinders
\begin{align*}
	& \H_{r}  := D(r) \H =  \left\{ D(r)(x,t) \mid (x,t) \in \H \right\}  \\
	& \H_{r} (x_{0}, t_{0}) := (x_{0}, t_{0}) \circ \H_r \\
	& \qquad \qquad \ \, = \left\{ (x_{0}, t_{0}) \circ D(r)(x,t) \mid (x,t) \in \H \right\} 
\end{align*}
centered at the origin and at a point $(x_{0}, t_{0}) \in \R^{N+1}$, respectively. Analogously, we define 
\begin{equation*}
 \H_{r}^+ (x_{0}, t_{0}) := (x_{0}, t_{0}) \circ D(r) \H^+, \qquad 
\H_{r}^- (x_{0}, t_{0}) := (x_{0}, t_{0}) \circ D(r) \H^-.
\end{equation*}

Given the above notation, we recall that in Theorem 5.1' of \cite{LP} is proved a Harnack inequality analogous to 
\eqref{harnack-levelsets}, where the sets $\overline{\O_{2r}(z_{0})}$ and $K_{r}(z_{0}, \e)$ are replaced by cylinders. 
Specifically, we have 
    \begin{equation}
	 	\label{harnack-LP}
		\sup \limits_{z \in \H^{-}_{r}(z_0)} u(z) \, \le c \, u(z_0),
    \end{equation}
whenever $\overline{\H_{r}(z_{0})} \subset \O$. 
We next quote the most general Harnack inequality for operators in non-divergence form as defined in \eqref{mmL}
proved in \cite{PODF}.
\begin{theorem}
	\label{harnack-podf}
Let $\L$ be an operator of the form \eqref{mmL} satisfying \textbf{(H1)-(H3)}.Then there exist positive constants $c, r_{0}, \a, \b, \g$ and $\d$, only dependent on the parameters of the assumptions \textbf{(H1)-(H3)}, such that
	 \begin{equation}
	 	\label{harnack-difra}
		\sup \limits_{z \in \H^{-}_{r}(z_0)} u(z) \, \le c \inf \limits_{z \in \H^{+}_{r}(z_0)} u(z),
	 \end{equation}
	 for every non negative solution $u$ to $\L u=0$ in an open subset $\O$ of $\R^{N+1}$, for every $z_{0} \in \O$ 
	 such that $\overline{\H_{r}(z_{0})} \subset \O$ and for every $r \in ]0,r_{0}[$.
\end{theorem}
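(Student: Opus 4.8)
The plan is to prove \eqref{harnack-difra} by a chaining argument resting on the mean value formula of Theorem~\ref{mean-value-formula} and the Gaussian bounds of Theorem~\ref{regularity-fun}, adapting to the operator \eqref{mmL} the scheme of Pini, Hadamard and of Lanconelli--Polidoro \cite{LP} that yields \eqref{harnack-LP} in the constant coefficient, dilation--invariant case. First I would normalize the configuration: conjugating \eqref{mmL} by the Galilean translation \eqref{law} with pole $z_{0}$ and by the dilation $D(r)$ of \eqref{fam-dil} sends $\H_{r}^{\pm}(z_{0})$ onto $\H^{\pm}$, sends $\H_{r}(z_{0})$ onto $\H$, and turns $\L$ into an operator $\L_{r}$ of the same structural form. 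By \eqref{Bstar} the entries of the $*$--blocks of $B$ acquire positive powers of $r$ as coefficients, and by \textbf{(H3)} the H\"older oscillation of the rescaled coefficients is controlled by a constant multiple of $r^{\a}$; hence, fixing $r_{0}$ small enough in terms of $\l$, $M$, the exponent of \textbf{(H3)} and $B$, for every $r\in\,]0,r_{0}[$ the operator $\L_{r}$ is a small perturbation of the fixed homogeneous model $\sum_{j=1}^{m_{0}}\p_{x_{j}}^{2}+\langle B_{0}x,D\rangle-\p_{t}$. The same smallness keeps the super-level sets $\O_{\rho}(z)$ of the fundamental solution $\G$ of $\L_{r}$ under control: by \eqref{stime}, \eqref{upper-bound} and Theorem~\ref{teo31} they are, for $\rho$ small, squeezed between super-level sets of the frozen fundamental solutions $Z(\cdot,\z)$ and of $\G_{0}$, hence they are small $d$--neighbourhoods of $z$, comparable to $d$--balls. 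It therefore suffices to prove \eqref{harnack-difra} at $z_{0}=0$ for a single small $r=r_{0}$, keeping the geometric objects below inside $\H=\H_{r_{0}}(0)$ (and hence inside $\O$), which is possible since $\H^{\pm}$ lie well inside $\H$.

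Next I would establish a local Harnack step. Applying the mean value formula of Theorem~\ref{mean-value-formula} --- the restriction to $B=B_{0}$ there being inessential, as remarked after that statement, once the super-level sets of $\L_{r}$ are known to be bounded --- and using that the kernel $M(\bar z;\cdot)$ of \eqref{mean} is non-negative and bounded below on a fixed compact ``tube'' $E\subset\O_{\rho}(\bar z)$ lying in the past $\{t<t_{\bar z}\}$ and bounded away from both the pole and the lateral boundary of $\O_{\rho}(\bar z)$, one gets $u(\bar z)\ge c_{1}\int_{E}u$ for every non-negative solution $u$ of $\L_{r}u=0$. Combining this with the interior local boundedness estimate $\sup_{E'}u\le C\int_{E}u$ valid for $E'\Subset E$ (a standard consequence of the parametrix representation \eqref{parametrix}) yields $u(\z')\le c_{3}\,u(\bar z)$ for every $\z'\in E'$, where $E'$ is a smaller tube lying in the past of $\bar z$ and contained in a set of the form $K_{\rho}(\bar z,\e)$ as in \eqref{super-level-K}, and $\rho$, $\e$, $c_{3}$ depend only on the structural parameters.

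Finally I would chain. Given $w^{+}\in\H^{+}$ and $w^{-}\in\H^{-}$, I would join $w^{-}$ to $w^{+}$ by a finite chain $w^{-}=\z_{0},\z_{1},\dots,\z_{n}=w^{+}$ with $\z_{i}$ lying in the ``past tube'' of $\z_{i+1}$ as above, so that the local step gives $u(w^{-})\le c_{3}^{n}\,u(w^{+})$. Such a chain exists because $\H^{+}$ sits at times near $0$ while $\H^{-}$ sits at strictly earlier times, and because the H\"ormander rank condition --- equivalently Kalman's rank condition \textbf{C4} of Proposition~\ref{equivalence} --- guarantees controllability of the drift system \eqref{opti-pbm}: one advances in time along the integral curves of $Y$ and performs short detours along $X_{1},\dots,X_{m}$ and their iterated commutators to move in the degenerate directions of $\ker A$, the comparability of the sets $\O_{\rho}$ with the $d$--balls ensuring that the number $n$ of links needed to go from any point of $\H^{-}$ to any point of $\H^{+}$ is bounded in terms only of the geometric parameters $\a,\b,\g,\d$ and of $B$. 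Choosing $\a,\b,\g,\d$ so that this construction is possible, taking the supremum over $w^{-}\in\H^{-}$ and the infimum over $w^{+}\in\H^{+}$, and undoing the translation and dilation, one obtains \eqref{harnack-difra} with $c=c_{3}^{n}$; the construction is uniform in $z_{0}$ and in $r\in\,]0,r_{0}[$, so $c$ and $r_{0}$ depend only on the structural parameters.

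I expect the main obstacle to be precisely this last step: building the chain with a \emph{uniform} bound on its length $n$. This demands a fairly explicit description of how the super-level sets $\O_{\rho}(z)$ --- equivalently the $d$--balls --- are sheared and anisotropically rescaled by the drift $\langle Bx,D\rangle$, which is where the general, non-$B_{0}$, block structure \eqref{B} of $B$ genuinely enters, combined with a careful use of controllability to propagate positivity into the strongly degenerate directions of $\ker A$. By contrast the variable coefficients and the $*$--blocks of $B$ are comparatively harmless: they are rendered negligible by the rescaling $D(r)$ with $r$ small and are kept under control, once and for all, through the Gaussian sandwich \eqref{upper-bound}--\eqref{stime} and Theorem~\ref{teo31}.
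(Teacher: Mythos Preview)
The paper does not prove Theorem~\ref{harnack-podf}; it is quoted from \cite{PODF}. So there is no in-paper proof to compare against, only the cited source. That said, your outline has a genuine gap relative to what the statement actually asserts.

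Your argument rests on the mean value formula, Theorem~\ref{mean-value-formula}. But that formula is derived from the Green identity and therefore needs the adjoint $\L^{*}$ to make sense, which is why it is stated in the divergence-form setting of \eqref{divform} under the extra hypothesis \textbf{(H4)} (existence and H\"older continuity of $\p_{x_{i}}a_{ij}$). Theorem~\ref{harnack-podf}, by contrast, is for the \emph{non-divergence} operator \eqref{mmL} under \textbf{(H1)--(H3)} only: the $a_{ij}$'s are merely H\"older continuous, so $\L^{*}$ is not available and the mean-value route does not apply. The paper is explicit about this split: the sentence just before Theorem~\ref{harnack-podf} records that the mean-value/level-set Harnack inequality \eqref{harnack-levelsets} was extended in \cite{P2} to variable-coefficient operators \eqref{divform} \emph{under} \textbf{(H1)--(H4)} and $B=B_{0}$, whereas Theorem~\ref{harnack-podf} is a separate, stronger statement from \cite{PODF}. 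Your rescaling trick to absorb the $*$-blocks of $B$ is fine in spirit, but it does nothing about the missing derivative of $a_{ij}$; after rescaling you still have only $C^{\a}$ coefficients, and your ``local Harnack step'' via \eqref{mean} is not justified.

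The method actually used in \cite{PODF} bypasses the adjoint entirely: one builds, via the Levi parametrix \eqref{parametrix}--\eqref{parametrix-G}, a Green function for the cylinder $\H_{r}(z_{0})$ and represents $u$ inside by an integral against non-negative boundary data; the two-sided Gaussian bounds \eqref{upper-bound} on $\G$ (and their analogues for the Green function) then give the one-step Harnack estimate directly, with no integration by parts. Your chaining/controllability discussion in the last paragraph is broadly correct and is indeed where the structure of $B$ enters, but it should be grafted onto this Green-function representation rather than onto Theorem~\ref{mean-value-formula}.
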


\medskip

In spite of their local nature, Harnack inequalities are essential tools for the proof of non-local results. Among them, we find the Liouville theorems proved by Kogoj and Lanconelli in \cite{KogojLanconelli1, KogojLanconelli2} and the ones proved by Kogoj, Pinchover and Polidoro in \cite{KogojPinchoverPolidoro}. Moreover, they are also used to derive asymptotic estimates for positive solutions by a repeated application of them. Harnack chains are the tool needed to prove this kind of estimates. 

\medskip

\noindent 
{\sc Harnack chain.} \textit{We say that a finite sequence $(x_0,t_0), (x_1,t_1), \dots, (x_k,t_k)$ is a \emph{Harnack chain} if there exist  positive constants $r_0, r_1, \dots, r_{k-1}$ such that $\H_{r_j}(x_j,t_j) \subset \Omega$ and $(x_{j+1},t_{j+1}) \in  \H_{\theta r_j}(x_j,t_j)$ for $j=0, \dots, k-1$, so that, by the repeated use of the Harnack inequality, we obtain 
\begin{equation*}
 u(x_k,t_k) \le c u (x_{k-1},t_{k-1}) \le \dots \le c^k u(x_0,t_0),
\end{equation*}
for every non-negative solution $u$ to $\L u = 0$ in $\Omega$.
}

\medskip

In particular, a first application of this tool can be found in the proof of Proposition \ref{proposition} in the following 
subsection, where Harnack chains are used to prove a geometric version of Theorem \ref{harnack-podf}. Further 
applications can be found in the papers by Polidoro \cite{P}, Di Francesco and Polidoro \cite{PODF}, Boscain and 
Polidoro \cite {BoscainPolidoro} and Cibelli and Polidoro \cite{CibelliPolidoro} to obtain asymptotic estimates for the 
fundamental solution. We also recall the work by Cinti, Nystr\"om and Polidoro \cite{CNP12, CNP13} where a 
boundary Harnack inequality is proved. 

\subsection{Strong Maximum Principle}

The most general statement of the strong maximum principle for \emph{subsoltions} to Kolmogorov equations is proved by Amano in \cite{Amano79}. It extends the Bony's \emph{maximum propagation principle} \cite{Bony69} to a wide family of possibly degenerate operators with coefficients $a_{ij} \in C^{1}$, among which we find the ones in the form \eqref{mmL}. To our knowledge, a proof of the strong maximum principle for operators of the form \eqref{mmL} with continuous coefficients $a_{ij}$'s is not available in literature, even though it is expected to be true. For this reason, in the following we derive from Theorem \ref{harnack-podf}
a strong maximum principle for \emph{solutions} to $\L u = 0$, assuming that the coefficients $a_{ij}$'s are H\"older continuous.

In order to state the strong maximum principle, we introduce the notion of $\L$-\textit{admissible curve} and that of $\L$-\textit{admissible set}. 
Recall that to every operator $\L$ in the form \eqref{mmL} we associate the model operator \eqref{mmLap}, 
which can be written in the H\"ormander form
\begin{equation*}
 \sum_{j=1}^{m_0} X_j^2 + Y, \quad \text{with} \quad X_j = \p_{x_j} \quad \text{for} \quad j=1, \dots, m_0.
\end{equation*}

\begin{definition}
	\label{admissible-curve}
Let $\L$ be an operator of the form \eqref{mmL}, satisfying assumptions \textbf{(H1)-(H3)}. We say that a curve $ \g: [0,T] \rightarrow \R^{N+1}$ is \emph{$\L$-admissible} if is absolutely continuous and 
\begin{equation*}
	 \dot{\g}(s) = \sum\limits_{k=1}^{m_0} \omega_k(s) X_k(\g(s)) + Y(\gamma(s))
\end{equation*}	
for almost every $s \in [0,T]$ and with $\o_1, \o_2, \dots, \o_{m_0} \in L^{1}[0,T]$.
\end{definition}

\medskip

\begin{definition} \label{prop-set}
Let $\O$ be any open subset of $\R^{N+1}$, and let $\L$ be an operator of the form \eqref{mmL}, satisfying assumptions \textbf{(H1)-(H3)}. For every point $(x_0,t_0) \in \O$ we denote by $\AS ( \O )$ the \emph{attainable set} defined as
\begin{equation*}
	\AS  ( \O ) = 
	\begin{Bmatrix}
	(x,t) \in \O \mid \hspace{1mm} \text{\rm there exists an} \ \L - \text{\rm admissible curve}
	\ \g : [0,T] \rightarrow \O \hspace{1mm} \\ 
	\hfill \text{\rm such that} \ \g(0) = (x_{0}, t_{0}) \hspace{1mm} {\rm and}  
	\hspace{1mm} \g(T) = (x,t)
	\end{Bmatrix}.
\end{equation*}
Whenever there is no ambiguity on the choice of the set $\O$ we denote $\AS = \AS ( \O )$. 
\end{definition}

\medskip

We are now in position to state the strong maximum principle. 
\begin{theorem} \label{maincor}
	Let $\O$ be any open subset of $\R^{N+1}$, and let $\L$ be an operator of the form \eqref{mmL}, satisfying assumptions \textbf{(H1)-(H3)}. 
	Let $u \ge 0$ be a solution to $\L u = 0$ in $\O$. If $u(x_{0}, t_{0})=0$ for some point $(x_{0}, t_{0}) \in \O$ , then $u(x,t) = 0$ for every $(x,t) \in \overline{\AS}$.
\end{theorem}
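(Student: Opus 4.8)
The plan is to derive the statement directly from the Harnack inequality \eqref{harnack-difra}, exploiting that $u\ge 0$. Set $Z:=\{z\in\Omega:\ u(z)=0\}$, which is relatively closed in $\Omega$ since $u$ is continuous; it suffices to show $\AS\subseteq Z$, the conclusion on $\overline{\AS}$ then following from the continuity of $u$. The first step is a \emph{local propagation} fact: if $z\in Z$ and $r>0$ is small enough that $\overline{\H_r(z)}\subset\Omega$, then $u\equiv 0$ on $\overline{\H^-_r(z)}$. Indeed $z$ belongs to the closure of $\H^+_r(z)=z\circ D(r)\H^+$ (take the spatial coordinate equal to $0$ and let the time coordinate tend to $0^-$), so by continuity and $u\ge0$ we get $\inf_{\H^+_r(z)}u=u(z)=0$; then \eqref{harnack-difra} gives $\sup_{\H^-_r(z)}u\le c\,\inf_{\H^+_r(z)}u=0$, hence $u\equiv 0$ there.

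The second step turns this into invariance of $Z$, inside $\Omega$, under the flows of $Y$ and of $\pm X_1,\dots,\pm X_{m_0}$. For $Y$: the integral curve $\g(s)=(E(-s)x,t-s)$ issued from $z=(x,t)$ satisfies $z^{-1}\circ\g(s)=(0,-s)$, and $(0,-s)\in D(r)\H^-$ exactly when $s\in r^2\,]\b,\g[$; by Step 1 this forces $\g(s)\in Z$ for all such $s$, and since $\b<\g$ these intervals, as $r$ ranges over small values, sweep out a whole right-neighbourhood of $0$, so the entire forward $Y$-orbit of $z$ in $\Omega$ lies in $Z$ (using that $Z$ is closed). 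For a horizontal direction $X_k$: the $\L$-admissible curve with $\dot\g=\o X_k+Y$, $\o$ a constant with $|\o|\lesssim \d/r$, reaches $\H^-_r(z)$ after a time $\sim r^2$, having displaced $x_k$ by $\sim \d r$ and descended $\sim r^2$ in time; concatenating $\sim L/(\d r)$ such elementary steps displaces $x_k$ by a prescribed amount $L$ with total time descent $\sim Lr/\d$ and with an accumulation in the higher (commutator) coordinates of the same vanishing order. Letting $r\to0$ and using again that $Z$ is closed, one obtains that the whole segment of the $x_k$-line through $z$ (at fixed time) lying in $\Omega$ is contained in $Z$; the sign of $\o$ is arbitrary.

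In the last step, given an $\L$-admissible curve $\g:[0,T]\to\Omega$ with $\g(0)=z_0\in Z$, I would approximate $\g$ uniformly --- on compact subarcs, keeping all curves inside $\Omega$ --- by polygonal curves obtained by concatenating arcs of the $\pm X_k$-flows and of the forward $Y$-flow, arranging the total $Y$-time to equal the time descent of $\g$. Each such polygonal curve has its endpoint in $Z$ by Step 2, so $\g(T)\in Z$ because $Z$ is closed; applying this to the restrictions $\g|_{[0,s]}$ gives $\g([0,T])\subset Z$. Hence $\AS\subseteq Z$ and, by continuity, $u\equiv 0$ on $\overline{\AS}$.

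The main obstacle is Step 2: the Harnack cylinders $\H^\pm_r(z)$ are \emph{strictly separated in time}, whereas the $X_k$-flows keep time fixed, so a single application of \eqref{harnack-difra} cannot propagate $Z$ along $X_k$; the staircase construction together with the limit $r\to0$ is needed, and one must verify carefully that the $O(r^2)$ time losses and the contributions to the non-parabolic coordinates really vanish in the limit, and that all intermediate points of the approximating polygonals remain in $\Omega$. A shortcut is available if the geometric form of the Harnack inequality established earlier (Proposition \ref{proposition}) is known in the form ``$u(z_1)\le C\,u(z_0)$ whenever $z_1$ is joined to $z_0$ by an $\L$-admissible curve contained in a fixed compact subset of $\Omega$'': exhausting $\Omega$ by compact sets then shows directly that $u(z_0)=0$ forces $u\equiv0$ on $\AS$, and continuity gives the conclusion on $\overline{\AS}$.
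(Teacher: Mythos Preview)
Your ``shortcut'' at the end is exactly the paper's proof: apply Theorem~\ref{mainthe} (which packages Proposition~\ref{proposition}), so that $\sup_K u\le C_K\,u(z_0)$ for every compact $K\subset{\rm int}(\AS)$; with $u(z_0)=0$ this forces $u\equiv0$ on ${\rm int}(\AS)$, and continuity extends to $\overline{\AS}$. Your Step~1 is also correct and is precisely the local mechanism underlying the Harnack chain in Proposition~\ref{proposition}.

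Your main line (Steps~2--3), by contrast, is a detour with the gaps you yourself flag. Step~2 attempts to show that the zero set $Z$ is invariant under the pure $X_k$-flow at \emph{fixed} time via a staircase-and-limit argument; this is strictly stronger than what the theorem asserts (points on a fixed-time $x_k$-segment through $z_0$ need not lie in $\AS$ at all, since every $\L$-admissible curve strictly decreases $t$), and the limiting control of the commutator coordinates and of confinement to $\Omega$ is not carried out. Step~3 then requires a polygonal-approximation lemma that you do not prove. None of this is necessary: the Harnack-chain construction already in Proposition~\ref{proposition}, via Lemma~\ref{lemma2.2}, handles the full admissible curve directly. Concretely, given $z\in\AS$ and an admissible $\gamma:[0,T]\to\Omega$ from $z_0$ to $z$, the partition $0=s_0<\dots<s_k=T$ built there satisfies $\gamma(s_{j+1})\in\H^-_{r_j}(\gamma(s_j))$ with $\overline{\H_{r_j}(\gamma(s_j))}\subset\Omega$; your Step~1 then gives $u(\gamma(s_j))=0\Rightarrow u(\gamma(s_{j+1}))=0$, and induction yields $u(z)=0$ without any polygonal approximation or fixed-time propagation.
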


We remark that the attainable set $\AS$ strongly depends on the domain $\O$. For instance, when $\O$ agrees with the unit box $\H = ]-1,1[^{2} \times ]-1,0[$ we have 
\begin{equation} \label{eq-prop-set} 
	\A_{(0,0,0)} = \Big\{ (x_1,x_2,t) \in \H \mid |x_1| \le |t| \Big\}.
\end{equation} 
\begin{center}
	\scalebox{0.9}{
 		\begin{pspicture*}(-6,-3)(8,2)
			\pcline[linecolor=dgrey](0,-2)(0,1.4)
			\pcline[linecolor=dgrey](0,1.4)(-.05,1.2)%
			\pcline[linecolor=dgrey](0,1.4)(.05,1.2)%
			\uput[0](0,1.4){$t$}
			\pcline[linecolor=dgrey](-4,1)(4,-1)%
			\pcline[linecolor=dgrey](4,-1)(3.8,-1)%
			\pcline[linecolor=dgrey](4,-1)(3.9,-.9)%
			\uput[0](4,-1){$x_{2}$}
			\pcline[linecolor=dgrey](2,1)(-2,-1)%
			\pcline[linecolor=dgrey](-2,-1)(-1.8,-1)%
			\pcline[linecolor=dgrey](-2,-1)(-1.9,-.9)%
			\uput[180](-2,-1){$x_{2}$}
			\pcline[linecolor=dgrey](-3,0)(1,-1)%
			\pcline[linecolor=dgrey](-3,-2)(1,-3)%
			\pcline[linecolor=dgrey](3,0)(-1,1)%
			\pcline[linecolor=dgrey, linestyle=dashed](3,-2)(-1,-1)%
			\pcline[linecolor=dgrey](3,0)(3,-2)%
			\pcline[linecolor=dgrey](-3,0)(-3,-2)%
			\pcline[linecolor=dgrey](1,-1)(1,-3)%
			\pcline[linecolor=dgrey, linestyle=dashed](-1,1)(-1,-1)%
			\pcline[linecolor=dgrey](3,0)(1,-1)%
			\pcline[linecolor=dgrey](-3,0)(-1,1)%
			\pcline[linecolor=dgrey](3,-2)(1,-3)%
			\pcline[linecolor=dgrey, linestyle=dashed](-3,-2)(-1,-1)%
			\pcline[linecolor=dgrey](-2,.5)(2,-.5)%
			\pcline[linecolor=dgrey, linestyle=dashed](0,0)(0,-2)%
			\pcline[linecolor=dgrey](2,-.5)(1,-3)%
			\pcline[linecolor=dgrey, linestyle=dashed](-2,.5)(-1,-1)%
			\pspolygon[fillstyle=solid,fillcolor=lgrey](-2,.5)(2,-.5)(1,-3)(-3,-2)
			\pspolygon[fillstyle=solid,fillcolor=grey](2,-.5)(3,-2)(1,-3)
			\pcline[linecolor=dgrey](2,1)(-2,-1)%
			\pcline[linecolor=dgrey](-2,-1)(-1.8,-1)%
			\pcline[linecolor=dgrey](-2,-1)(-1.9,-.9)%
			\uput[180](-2,-1){$x_{1}$}
			\uput[90](-1.8,0.8){$\H$}
			\pcline[linecolor=dgrey](-3,0)(1,-1)%
			\pcline[linecolor=dgrey](-3,-2)(1,-3)%
			\pcline[linecolor=dgrey](3,0)(-1,1)%
			\pcline[linecolor=dgrey, linestyle=dashed](3,-2)(-1,-1)%
			\pcline[linecolor=dgrey](3,0)(3,-2)%
			\pcline[linecolor=dgrey](-3,0)(-3,-2)%
			\pcline[linecolor=dgrey](1,-1)(1,-3)%
			\pcline[linecolor=dgrey, linestyle=dashed](-1,1)(-1,-1)%
			\pcline[linecolor=dgrey](3,0)(1,-1)%
			\pcline[linecolor=dgrey](-3,0)(-1,1)%
			\pcline[linecolor=dgrey](3,-2)(1,-3)%
			\pcline[linecolor=dgrey, linestyle=dashed](-3,-2)(-1,-1)%
			\pcline[linecolor=dgrey](-2,.5)(2,-.5)%
			\pcline[linecolor=dgrey, linestyle=dashed](0,0)(0,-2)%
			\pcline[linecolor=dgrey](2,-.5)(1,-3)%
			\pcline[linecolor=dgrey, linestyle=dashed](-2,.5)(-1,-1)%
			\dotnode(0,0,0){O}
			\uput[115](0.16,0){$(0,0,0)$}
		\end{pspicture*}}
		\end{center}
		\begin{center}
  {\scriptsize \sc Fig. 2 - $\mathscr{A}_{(0,0,0)}(\H)$ with $\H =]-1, 1[^{2} \times ]-1, 0[ $.}
\end{center}

For the proof of this fact we refer to \cite{CNP10}, Proposition 4.5, p.$353$. Moreover, the statement of Theorem \ref{maincor} is optimal. Indeed, in Proposition 4.5 of \cite{CNP10} it is also shown that there exists a non-negative solution $u$ to $\L u = 0$ in $\H$ such that $u(x,t) = 0$ for every  $(x,t) \in \overline{\A_{(0,0)}}$, and $u(x,t) > 0$ for every  $(x,t) \in \H \backslash \overline{\A_{(0,0)}}$.

\medskip 

In order to prove Theorem \ref{maincor}, we first need to prove the following intermediate result.
\begin{theorem} \label{mainthe}
Let $\L$ be an operator of the form \eqref{mmL} satisfying \textbf{(H1)-(H3)}, and let $\O$ be an open subset of 
$\R^{N+1}$. For every $z_{0} \in \O$, and for any compact set $K \subseteq {\rm int} \big(\AS \big)$, there exists a 
positive constant $C_{K}$, 
only dependent on $\O$, $z_{0}$, $K$ and on the operator $\L$, such that 
\begin{equation*}
  \sup_{z \in K} u(z) \, \le \, C_{K} \, u(z_{0}),
\end{equation*}
for every non negative solution $u$ to $\L u=0$ in $\O$.
\end{theorem}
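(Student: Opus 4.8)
The plan is to deduce Theorem \ref{mainthe} from the cylindrical Harnack inequality of Theorem \ref{harnack-podf} by a Harnack chain argument, using the observation that the cylinders $\H_r^\pm(x_0,t_0)$ are themselves reached from $(x_0,t_0)$ along $\L$-admissible curves, so that the interior of the attainable set $\mathscr{A}_{z_0}(\O)$ is exactly the set of points that can be linked to $z_0$ by a finite Harnack chain staying inside $\O$. First I would record the precise geometric fact underlying this: if $z_1 \in \H_{\theta r}(z_0)$ for a suitable fixed $\theta \in ]0,1[$ (depending only on the structural parameters $\alpha,\beta,\gamma,\delta$ of Theorem \ref{harnack-podf}), and $\overline{\H_r(z_0)} \subset \O$, then $u(z_1) \le c\, u(z_0)$ for every non-negative solution $u$; moreover every point of $\H_{\theta r}(z_0)$ lies on an $\L$-admissible curve issuing from $z_0$ and contained in $\H_r(z_0)$. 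This is where the model operator $\Delta_{m_0}+Y$ and the explicit form of the dilations $D(r)$ and the translation $\circ$ enter: one checks that the "forward cone" opened by integral curves of $Y$ perturbed by the fields $X_j = \p_{x_j}$ sweeps out a neighborhood of the relevant piece of the cylinder.

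The second step is the chaining. Fix $z_0 \in \O$ and a compact set $K \subseteq \mathrm{int}(\mathscr{A}_{z_0})$. For each $w \in K$, pick an $\L$-admissible curve $\g_w : [0,T_w] \to \O$ with $\g_w(0)=z_0$, $\g_w(T_w)=w$, whose image is a compact subset of $\O$; by compactness of that image there is $\rho_w > 0$ with $\overline{\H_{\rho_w}(\g_w(s))} \subset \O$ for all $s$, and $\rho_w$ can be taken $< r_0$. Subdividing $[0,T_w]$ finely enough (using absolute continuity of $\g_w$ and the structure of the dilations), one produces points $z_0 = \zeta_0, \zeta_1, \dots, \zeta_{k_w} = w$ with $\zeta_{i+1} \in \H_{\theta \rho_w}(\zeta_i)$ and $\overline{\H_{\rho_w}(\zeta_i)} \subset \O$; applying Theorem \ref{harnack-podf} along the chain gives $u(w) \le c^{k_w} u(z_0)$. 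Here I would be a little careful that the number of steps $k_w$ can be bounded: this requires that along a curve of bounded "control length" $\int_0^{T}(|\o(s)|+1)\,ds$, staying at distance $\ge \rho$ from $\p\O$, the number of cylinders of size $\rho$ needed is controlled — a quantitative statement one gets from the sub-Riemannian/parabolic scaling, i.e. each step of the chain advances a definite amount in the $d$-quasidistance.

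The final step is a covering argument to make the constant uniform over $K$. Since $K$ is compact and contained in the open set $\mathrm{int}(\mathscr{A}_{z_0})$, and since for each $w$ the bound $u(\cdot) \le c^{k_w} u(z_0)$ actually holds on a whole neighborhood of $w$ (one more application of Harnack from $\zeta_{k_w-1}$ controls $u$ on a small cylinder around $w$, hence on a Euclidean neighborhood), we extract a finite subcover and take $C_K = \max_i c^{k_{w_i}+1}$, which depends only on $\O$, $z_0$, $K$ and $\L$. The main obstacle I anticipate is precisely the first step — verifying cleanly that the cylinder $\H_{\theta r}(z_0)$ is swept out by $\L$-admissible curves contained in $\H_r(z_0)$, and packaging the "bounded number of steps" estimate — because it is the place where the abstract attainable set of Definition \ref{prop-set} must be matched to the concrete geometry of the Harnack cylinders; once that dictionary is in place, the chaining and covering are routine. (Theorem \ref{maincor} then follows by letting $K$ exhaust $\mathrm{int}(\overline{\mathscr{A}_{z_0}}) \supseteq \mathrm{int}(\mathscr{A}_{z_0})$ and passing to the closure, since $u \ge 0$ and $u(z_0)=0$ force $u \equiv 0$ on each such $K$, hence on $\overline{\mathscr{A}_{z_0}}$ by continuity.)
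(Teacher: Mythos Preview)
Your outline matches the paper's proof: first establish a local version (for each $z \in \mathrm{int}(\mathscr{A}_{z_0})$ find an open $U_z \ni z$ and a constant $c_z$ with $\sup_{U_z} u \le c_z\, u(z_0)$) via a Harnack chain along an $\L$-admissible curve from $z_0$ to $z$, then cover the compact $K$ by finitely many $U_{z_j}$ and set $C_K = \max_j c_{z_j}$.

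The geometric step you flag as the obstacle is packaged in the paper as a lemma quoted from \cite{BoscainPolidoro}: there is $h>0$ depending only on $\L$ such that if $\int_a^b |\omega(\tau)|^2\,d\tau \le h$ then $\gamma(b) \in K_r^-(\gamma(a)) \subset \H_r^-(\gamma(a))$ with $r = \sqrt{2(b-a)/(\beta+\gamma)}$. Note that this is the implication actually used in your Step~2 --- a short admissible arc lands its endpoint in the previous point's Harnack set $\H_r^-$ --- and it is the \emph{opposite} direction from the ``every point of $\H_{\theta r}(z_0)$ lies on an $\L$-admissible curve from $z_0$'' statement in your first step, which is not needed here. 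With this lemma in hand, uniform continuity of $s \mapsto \int_0^s |\omega(\tau)|^2\,d\tau$ on $[0,T]$ immediately yields a subdivision $0=s_0<\dots<s_k=T$ with $k$ controlled by $T$, $\int_0^T |\omega|^2$ and $r_0 := \min_{s\in[0,T]} r(s)$ (where $r(s)$ is the largest $r$ with $\widetilde\H_r(\gamma(s))\subset\O$), so no separate ``bounded number of steps'' estimate is required; and the last application of Theorem~\ref{harnack-podf} automatically bounds $u$ on the open set $\H_{r_1}^-(\gamma(s_{k-1}))\ni z$, which supplies your $U_z$.
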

We then obtain, as a direct consequence, the proof of the Strong Maximum Principle stated in Theorem \ref{maincor}. In order to achieve this program, we introduce a further notation and we recall a lemma, whose proof can be found in Lemma 2.2 of \cite{BoscainPolidoro}. 
Given $\b, \delta$ as in the definition of $\H^{-}$ and for every $z \in \R^{N+1}$, $r>0$ we set 
\begin{equation*}
\begin{split}
  & \widetilde \H := ]-1,1[^{N+1} \qquad 
  \widetilde \H_{r} (x_{0}, t_{0}) := (x_{0}, t_{0}) \circ D(r) \widetilde \H; \\
  & K^- = D_0(\delta)\left(]-1,1[^{N}\right) \times \big\{ -\tfrac{\b+\g}{2} \big\} \qquad 
  K_{r}^- (x_{0}, t_{0}) := (x_{0}, t_{0}) \circ D(r) K^-.
\end{split}
\end{equation*}

\begin{lemma} \label{lemma2.2}
Let $\g: [0,T] \rightarrow \R^{N+1}$ be an $\L-$admissible path and let $a, b$ be two constants s.t. $0 \le a < b \le T$. 
Then there exists a positive constant $h$, only depending on $\L$, such that
\begin{equation*}
		\int_{a}^{b} |\o (\t)|^{2} \d\t \le h \hspace{4mm} \implies 
		\hspace{4mm} \g(b) \in K^{-}_{r}(\g(a)), \hspace{2mm} {\rm with} \hspace{2mm}
		r = \sqrt{ 2 \frac{b - a}{\b + \g}}.
\end{equation*}
\end{lemma}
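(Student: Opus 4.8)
The plan is to reduce everything to an explicit computation via the group law and the variation--of--constants formula, and then to estimate the resulting vector coordinate by coordinate, the anisotropy of the estimate being produced by the block structure of $B$.

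First I would normalize. By left invariance of the vector fields $X_1,\dots,X_{m_0}$, $Y$ of the model operator and of the composition law, the hypothesis and the conclusion are unaffected if $\g$ is replaced by $\g(a)^{-1}\circ\g$, and after reparametrizing we may also take $a=0$, setting $L:=b-a$. The (normalized) curve then solves $\dot x=Bx+(\o(s),0)$ and $\dot t=-1$ with $x(0)=0$, $t(0)=0$, where $(\o(s),0)$ denotes $\o(s)\in\R^{m_0}$ completed by zeros in the last $N-m_0$ coordinates (recall $X_k=\p_{x_k}$). Hence, using $E(-s)=\exp(sB)$ from \eqref{c},
\[
\g(b)=\big(X,\,-L\big),\qquad X:=\int_0^L \exp\!\big((L-\s)B\big)\,(\o(\s),0)\,d\s .
\]
Since $K^-_r(\g(a))=D(r)K^-$ has time offset $r^2\cdot\big(-\tfrac{\b+\g}{2}\big)=-L$ by the choice $r=\sqrt{2L/(\b+\g)}$, the time component already fits, and it remains only to check the spatial condition $D_0(1/r)X\in D_0(\d)\big(]-1,1[^N\big)$, i.e.\ $|X_j|<(r\d)^{q_j}$ for every $j=1,\dots,N$, where $q_j\in\{1,3,\dots,2\k+1\}$ is the dilation weight of the $j$-th coordinate.

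The heart of the matter is the following anisotropic bound. Split the coordinates of $\R^N$ into the blocks of \eqref{B}: the $j$-th coordinate belongs to block $\ell$ exactly when $q_j=2\ell+1$. Because $B$ carries block $p$ into blocks $q\le p+1$ only, the (block $\ell$, block $0$) entry of $B^k$ vanishes whenever $k<\ell$, so the corresponding entry of $\exp(sB)$ is $O(s^{\ell})$ as $s\to0$; quantitatively, for $j$ in block $\ell$ one gets $\big|(\exp(sB)(\o,0))_j\big|\le C_\ell\,s^{\ell}e^{Cs}\,|\o|$ with $C,C_\ell$ depending only on the blocks of $B$. By Cauchy--Schwarz and the hypothesis $\int_0^L|\o|^2\le h$,
\[
|X_j|\le C_\ell\,e^{CL}\!\int_0^L (L-\s)^{\ell}|\o(\s)|\,d\s\le \frac{C_\ell\,e^{CL}}{\sqrt{2\ell+1}}\,L^{\ell+\frac12}\sqrt h=C'_\ell\,e^{CL}\,L^{q_j/2}\sqrt h ,
\]
while $(r\d)^{q_j}=\d^{q_j}\big(2/(\b+\g)\big)^{q_j/2}L^{q_j/2}$. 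The powers of $L$ cancel, so it suffices to choose $h>0$ so small that $C'_\ell\,e^{CL}\sqrt h<\d^{q_j}\big(2/(\b+\g)\big)^{q_j/2}$ for each $\ell=0,\dots,\k$; since there are finitely many blocks, such an $h$ exists, depending only on the blocks $B_1,\dots,B_\k$, the integers $m_0,\k$, and $\d,\b,\g$ — all determined by $\L$. This yields $|X_j|<(r\d)^{q_j}$ for every $j$, hence $\g(b)\in K^-_r(\g(a))$, as claimed.

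I expect the one genuine obstacle to be the anisotropic estimate just described: one must extract from the Hessenberg structure of $B$ the sharp $O(s^{\ell})$ vanishing of the block $(\ell,0)$ part of $\exp(sB)$, which is precisely what makes the powers of $L=b-a$ cancel against $(r\d)^{q_j}$; the naive bound $|X_j|\le C\sqrt L\,\sqrt h$ would instead fail for every block with $q_j>1$ as $L\to0$. The exponential factor $e^{CL}$ is harmless as soon as $b-a$ stays bounded — which is always the case in the applications, where the lemma is used with $r<r_0$, hence $b-a<r_0^2(\b+\g)/2$ — and disappears entirely when the drift $B=B_0$ is nilpotent.
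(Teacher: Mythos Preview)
The paper does not prove this lemma; it merely quotes it and refers to Lemma~2.2 of \cite{BoscainPolidoro} for the argument. Your direct computation via left invariance, the variation--of--constants formula, and the block--Hessenberg structure of $B$ is exactly the natural proof and is correct. The key observation --- that the $(\ell,0)$ block of $B^k$ vanishes for $k<\ell$, forcing the corresponding block of $\exp(sB)$ to be $O(s^\ell)$ --- is precisely what makes the exponent $q_j/2=\ell+\tfrac12$ appear in the bound for $|X_j|$ and cancel against the dilation weight in $(r\delta)^{q_j}$; you identified this as the heart of the matter, and it is.

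Your caveat about the factor $e^{CL}$ is not a defect of your argument but a genuine observation about the statement. For general $B$ (with nonzero $*$--blocks, hence non-nilpotent), the lemma as written --- with $h$ depending \emph{only} on $\L$ and no a~priori bound on $b-a$ --- cannot literally hold: Cauchy--Schwarz is sharp, and for large $L$ one can produce controls with arbitrarily small energy but $|X_j|\gg (r\delta)^{q_j}$. In the dilation--invariant case $B=B_0$ the matrix is nilpotent, $\exp(sB_0)$ is polynomial, and the issue disappears; in the general case the lemma is only needed in the proof of Proposition~\ref{proposition} with $b-a\le\delta_0$, where your bound is uniform. So your treatment is both correct and more careful than the survey's statement.
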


Note that $K^-_r(z)$ is a subset of $\H^-_r(z)$, then Lemma \ref{lemma2.2} implies that $\H^{-}_{r}(\g(a))$ is an open neighborhood of $\g(b)$.
Our first result of this section is a local version of Theorem \ref{harnack-podf}, whose proof only relies on the Harnack chains  and on Lemma \ref{lemma2.2}.

\begin{proposition} \label{proposition}
Let $z_{0}$ be a point of $\O$, an open subset of $\R^{N+1}$. For every $z \in {\rm int} \big( \A_{z_{0}} \big)$ there exist an open neighborhood $U_{z}$ of $z$ and a positive constant $C_{z}$ such that
\begin{equation*}
		\sup_{U_{z}} u \, \le \, c_{z} \, u(z_{0})
\end{equation*}
for every non-negative solution $u$ to $\L u=0$ in an open subset $\O$ of $\R^{N+1}$.
\end{proposition}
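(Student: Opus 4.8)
The plan is to join $z_{0}$ to $z$ by an $\L$-admissible curve, to cut that curve into finitely many short pieces by means of Lemma \ref{lemma2.2}, and then to iterate the Harnack inequality of Theorem \ref{harnack-podf} along the resulting chain of cylinders, the very last cylinder of the chain playing the role of the neighbourhood $U_{z}$.

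First I would fix $z\in{\rm int}\big(\A_{z_{0}}\big)$ and pick, by Definition \ref{prop-set}, an $\L$-admissible curve $\g\colon[0,T]\to\O$ with $\g(0)=z_{0}$ and $\g(T)=z$; here $T>0$, since $z\ne z_{0}$ because time strictly decreases along $\L$-admissible curves. Since $z$ lies in the \emph{interior} of the attainable set, one may moreover assume that the controls $\o_{1},\dots,\o_{m_{0}}$ are square-integrable on $[0,T]$ (interior points of the attainable set are reached through curves with, say, bounded controls), so that Lemma \ref{lemma2.2} is applicable on every subinterval. As $\g([0,T])$ is a compact subset of $\O$, there is $\r>0$ such that $\overline{\H_{r}(\g(s))}\subset\O$ for every $s\in[0,T]$ and every $r\in\,]0,\r]$. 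Let $h>0$ be the constant of Lemma \ref{lemma2.2} and let $c>0$, $r_{0}>0$ be the constants of Theorem \ref{harnack-podf}. Since $s\mapsto\int_{0}^{s}|\o(\t)|^{2}\,d\t$ is continuous, non-decreasing and finite, one can choose a finite partition $0=s_{0}<s_{1}<\dots<s_{k}=T$ so that $\int_{s_{j}}^{s_{j+1}}|\o(\t)|^{2}\,d\t\le h$ and, at the same time, so that $s_{j+1}-s_{j}$ is small enough that the radius $r_{j}$ attached to $[s_{j},s_{j+1}]$ by Lemma \ref{lemma2.2} (which tends to $0$ with $s_{j+1}-s_{j}$) satisfies $r_{j}<\min\{\r,r_{0}\}$, for $j=0,\dots,k-1$. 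Lemma \ref{lemma2.2} then yields
\begin{equation*}
 \g(s_{j+1})\in K^-_{r_{j}}\big(\g(s_{j})\big)\subset\H^-_{r_{j}}\big(\g(s_{j})\big),\qquad j=0,\dots,k-1 .
\end{equation*}

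Next I would run the chain. For every $j$ we have $\overline{\H_{r_{j}}(\g(s_{j}))}\subset\O$ and $r_{j}\in\,]0,r_{0}[$, so Theorem \ref{harnack-podf} applied to the non-negative solution $u$ gives $\sup_{\H^-_{r_{j}}(\g(s_{j}))}u\le c\,\inf_{\H^+_{r_{j}}(\g(s_{j}))}u$. The centre $\g(s_{j})$ of the cylinder lies in $\overline{\H^+_{r_{j}}(\g(s_{j}))}$ and $u$ is continuous, hence $\inf_{\H^+_{r_{j}}(\g(s_{j}))}u\le u(\g(s_{j}))$; together with the inclusion above this gives $u(\g(s_{j+1}))\le c\,u(\g(s_{j}))$ for $j=0,\dots,k-1$, and iterating, $u(\g(s_{k-1}))\le c^{\,k-1}u(z_{0})$. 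Finally, I would set $U_{z}:=\H^-_{r_{k-1}}\big(\g(s_{k-1})\big)$: since $z=\g(s_{k})\in K^-_{r_{k-1}}(\g(s_{k-1}))$, the observation recorded right after Lemma \ref{lemma2.2} shows that $U_{z}$ is an open neighbourhood of $z$, and since $\overline{\H_{r_{k-1}}(\g(s_{k-1}))}\subset\O$ one last application of Theorem \ref{harnack-podf} and the continuity of $u$ give
\begin{equation*}
 \sup_{U_{z}}u\;\le\;c\,\inf_{\H^+_{r_{k-1}}(\g(s_{k-1}))}u\;\le\;c\,u(\g(s_{k-1}))\;\le\;c^{\,k}\,u(z_{0}),
\end{equation*}
which is the assertion with $c_{z}:=c^{\,k}$; the integer $k$, hence $c_{z}$, depends only on $\g$, $\O$, $z_{0}$, $z$ and $\L$.

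I expect the main obstacle to be the bookkeeping in the choice of the partition: it must be finite, each cylinder $\H_{r_{j}}(\g(s_{j}))$ must lie well inside $\O$, and each radius must fall below the threshold $r_{0}$ of Theorem \ref{harnack-podf} — all of which is secured by compactness of $\g([0,T])$ together with the square-integrability of the controls, and it is precisely in this last reduction that the hypothesis $z\in{\rm int}(\A_{z_{0}})$ enters. A second, smaller point is to notice that the final cylinder $\H^-_{r_{k-1}}(\g(s_{k-1}))$ is already an open neighbourhood of $z$, so that the pointwise estimate at $z$ upgrades automatically to an estimate on a whole neighbourhood; everything else is the standard Harnack-chain iteration.
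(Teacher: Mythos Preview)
Your proposal is correct and follows essentially the same route as the paper: pick an $\L$-admissible curve from $z_{0}$ to $z$, use compactness of $\g([0,T])$ to get a uniform radius so that all the cylinders $\H_{r}(\g(s))$ stay inside $\O$, partition $[0,T]$ into finitely many subintervals on which Lemma \ref{lemma2.2} applies, iterate Theorem \ref{harnack-podf} along the resulting chain, and take $U_{z}$ to be the last $\H^{-}$ cylinder. The only notable difference is that you explicitly invoke the hypothesis $z\in{\rm int}(\A_{z_{0}})$ to reduce to square-integrable controls (so that $\int|\o|^{2}$ makes sense in Lemma \ref{lemma2.2}), whereas the paper simply writes $I(s)=\int_{0}^{s}|\o(\t)|^{2}d\t$ and proceeds; your extra care here is justified, since Definition \ref{admissible-curve} only asks for $\o\in L^{1}$.
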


\begin{proof}
Let $z$ be any point of $\text{int} \big( \A_{z_0} \big)$. We plan to prove our claim by 
constructing a finite Harnack chain connecting $z$ to $z_0$. Because of the very definition 
of $\A_{z_0}$, there exists a $\L-$admissible curve $\g : [0, T] \rightarrow \O$ steering $z_0$ to $z$. 
Our Harnack chain will be a finite subset of $\g([0,T])$. As $\widetilde \H_{r} (x_{0}, t_{0})$ is an open neighborhood of $(x_{0}, t_{0})$, for every $s \in [0,T]$ we can set 
\begin{equation} \label{rs}
		r(s) \, := \, \sup \left\{ r > 0 \, : \, \widetilde \H_{r}(\g(s)) \subseteq \O \right\}.
\end{equation}
Note that the function \eqref{rs} is continuous, then it is well defined the positive number 
\begin{equation} \label{r}
	r_0 \hspace{1mm} := \hspace{1mm} \min_{s \in [0,T]} r(s). 
\end{equation}
Moreover $\H_{r}(\g(s)) \subset \widetilde \H_{r}(\g(s))$, then
\begin{equation} \label{Qr}
	\H_{r}(\g(s)) \subseteq \O \quad \text{for every} \ s \in [0,T] \quad \text{and} \ r \in ]0, r_0]. 
\end{equation}
On the other hand, we notice that the following function is (uniformly) continuous in $[0,T]$
\begin{equation}
	I(s) \hspace{1mm} := \hspace{1mm} \int_{0}^{s} | \o (\t)|^{2} dt, 
\end{equation}
then there exists a positive $\delta_0$ such that $\delta_0 \le \b  r_0$ and that 
\begin{equation} \label{delta}
	\int_{a}^{b} | \o (\t)|^{2} dt \le h \qquad \text{for every} \ 
	a,b \in [0,T], \quad \text{such that} \  0 < a-b \le \d_0, 
\end{equation}
where $h$ is the constant appearing in Lemma \ref{lemma2.2}. 

We are now ready to construct our Harnack chain. Let $k$ be the unique positive integer such that $(k-1) \d_0 < T$, 
and $k \d_0 \ge T$. We define $\{ s_{j} \}_{j \in \{ 0, 1, \ldots, k \}} \in [0,T]$ as follows: $s_j = j \d_0$ for $j=0,1, \dots, 
k-1$, and $s_k =T$.
As noticed before, the equation \eqref{delta} allows us to apply Lemma \ref{lemma2.2}. We then obtain
\begin{equation} \label{propsuccessione}
	\g(s_{j+1}) \in \H^{-}_{{r_0}} (\g(s_{j}))  \hspace{5mm} j=0, \ldots, k-2, \qquad \g(s_k) \in \H^{-}_{{r_1}} 
	(\g(s_{k-1})),
\end{equation}
for some $r_1 \in ]0, r_0]$. We next show that $\left( \g(s_{j}) \right)_{j=0,1, \dots,k}$ is a Harnack chain and we 
conclude the proof.  We proceed by induction. 
For every $j = 1, \ldots, k-2$ we have that $\g (s_{j+1}) \in \H^{-}_{{r_0}} (\g(s_{j}))$. From \eqref{Qr} we know that 
$\H_{{r_0}} (\g(s_{j})) \subseteq \Omega$, then we apply Theorem \ref{harnack-podf} and we find
 
\begin{align*}
	u(\g(s_{j+1})) \le \sup_{\H^{-}_{{r_0}}(\g (s_{j}))} u \le \, c 
	 \inf_{\H^{+}_{{r_0}}(\g (s_{j}))} u 
	\le c  u(\g (s_{j}).
\end{align*}
As a consequence, we obtain
\begin{align*}
u(\g (s_{k-1})) &\le c u(\g(s_{k-2}))  \le M^{2} u(\g(s_{k-3}))  \le \ldots \le c^{k-1} u(\g(0)).
\end{align*}
We eventually apply Theorem \ref{harnack-podf} to the set $\H_{{r_1}} (\g(s_{k-1})) \subseteq \Omega$ and we obtain
\begin{equation*}
		\sup_{U_{z}} u \, \le \, c^{k} \,  u(z_{0}) ,
\end{equation*}
where $U_{z} = \H^{-}_{{r_1}} (\g(s_{k-1}))$. As we noticed above,
$\H^{-}_{{r_1}} (\g(s_{k-1}))$ is an open neighborhood of $\g(T)$. This concludes the proof.
\end{proof}

\medskip

\noindent {\sc Proof of Theorem \ref{mainthe}}.
Let  $K$ be any compact subset of ${\rm int} \left( \A_{z_0} \right)$.  For every $z \in K$ we 
consider the open set $U_{z}$ appearing in the statement of Proposition \ref{proposition}. Clearly we have
\begin{equation*}
	K \, \subseteq \, \bigcup_{z \in K} \hspace{1mm} U_{z}.
\end{equation*}
Because of its compactness, there exists a finite covering of $K$
\begin{equation*}
	K \, \subseteq \, \bigcup_{j=1, \ldots, m_{K}} \hspace{1mm} U_{z_{j}},
\end{equation*}
and Proposition \ref{proposition} yields
\begin{equation*}
	\sup_{U_{z_{j}}} u \le C_{z_{j}} \,
	u(z_{0}) \qquad j=1, \ldots, m_{K}.
\end{equation*} 
This concludes the proof of Theorem \ref{mainthe}, if we choose
\begin{equation*}
	C_{K} \, = \, \max_{j=1, \ldots, m_{K}} C_{z_{j}}.
\end{equation*}
\hfill $\square$

\medskip

\noindent {\sc Proof of Theorem \ref{maincor}}. 
If $u$ is a non-negative solution to $\L u = 0$ in $\Omega$ and $K$ is a compact subset of ${\rm int} ( \A_{z_{0}} )$, then $\sup_{K} u \le C_{K} u(z_{0})$. If moreover $u(z_{0}) = 0$, we have $u(z) = 0$ for every $z \in K$ and, thus, for every $z \in \A_{z_{0}}$. The conclusion of the proof then follows from the continuity of $u$. \hfill $\square$

\section*{Acknowledgements}
The authors thank the anonymous referee for her/his suggestions that improved this manuscript. This research was partially supported by the grant of Gruppo Nazionale per l'Analisi Matematica, la Probabilità e le loro Applicazioni (GNAMPA) of the Istituto Nazionale di Alta Matematica (INdAM). The second author acknowledges financial support from the FAR2017 project ``The role of Asymmetry and Kolmogorov equations in financial Risk Modelling (ARM)”.


\begin{thebibliography}{80}

\bibitem{AT}
{\sc F.~Abedin and G.~Tralli}, {\em Harnack inequality for a class of
  {K}olmogorov--{F}okker--{P}lanck equations in non-divergence form}, Arch.
  Ration. Mech. Anal., 233 (2019), pp.~867--900.
  
\bibitem{spigler}
{\sc D.~R. Akhmetov, M. M.~Jr. Lavrentiev and R.~Spigler }, {\em Singular perturbations for parabolic equations with unbounded coefficients leading to ultraparabolic equations}, Differential Integral Equations 17 (2004), no. 1-2, 99–118. 

\bibitem{AD}
{\sc B.~Alziary, J.~P. D\'ecamps and P.~Koehl}, {\em A P.D.E. approach to
  asian options: analyitical and numerical evidence}, J. Banking Finance, 21
  (1997), pp.~613--640.

\bibitem{Amano79}
{\sc K.~Amano}, {\em Maximum principles for degenerate elliptic-parabolic
  operators}, Indiana Univ. Math. J., 28 (1979), pp.~545--557.

\bibitem{AEP}
{\sc F.~Anceschi, M.~Eleuteri and S.~Polidoro}, {\em {A} geometric statement
  of the {H}arnack inequality for a degenerate {K}olmogorov equation with rough
  coefficients}, Communications in Contemporary Mathematics, (2018) 
  DOI: 10.1142/S0219199718500578.

\bibitem{APR}
{\sc F.~Anceschi, S.~Polidoro and M.~A. Ragusa}, {\em {M}oser's estimates for
  degenerate {K}olmogorov equations with non-negative divergence lower order
  coefficients}, Nonlinear Analysis, (2019)
  DOI: 10.1016/j.na.2019.07.001.

\bibitem{AM}
{\sc S.~Armstrong and J.~Mourrat}, {\em {V}ariational methods for the kinetic
  {F}okker-{P}lanck equation}, arXiv:1902.04037 (2019).

\bibitem{AS}
{\sc D.~G. Aronson and J.~Serrin}, {\em Local behavior of solutions of
  quasilinear parabolic equations}, Arch. Rational Mech. Anal., 25 (1967),
  pp.~81--122.

\bibitem{B}
{\sc G.~Barles}, {\em Convergence of numerical schemes for degenerate parabolic
  equations arising in finance theory}, in Numerical methods in finance,
  Cambridge University Press, Cambridge,  (1997), pp.~1--21.

\bibitem{BP}
{\sc J.~Barraquand and T.~Pudet}, {\em Pricing of american path-dependent
  contingent claims}, Math. Finance, 6 (1996), pp.~17--51.

\bibitem{BPVE}
{\sc E.~Barucci, S.~Polidoro and V.~Vespri}, {\em Some results on partial
  differential equations and {A}sian options}, Math. Models Methods Appl. Sci.,
  11 (2001), pp.~475--497.

\bibitem{BLU}
{\sc A.~Bonfiglioli, E.~Lanconelli and F.~Uguzzoni}, {\em Stratified {L}ie
  groups and potential theory for their sub-{L}aplacians}, Springer Monographs
  in Mathematics, Springer, Berlin, 2007.

\bibitem{Bony69}
{\sc J.-M. Bony}, {\em Principe du maximum, in\'{e}galite de {H}arnack et
  unicit\'{e} du probl\`eme de {C}auchy pour les op\'{e}rateurs elliptiques
  d\'{e}g\'{e}n\'{e}r\'{e}s}, Ann. Inst. Fourier (Grenoble), 19 (1969),
  pp.~277--304 xii.

\bibitem{BoscainPolidoro}
{\sc U.~Boscain and S.~Polidoro}, {\em Gaussian estimates for hypoelliptic
  operators via optimal control}, Atti Accad. Naz. Lincei Rend. Lincei Mat.
  Appl., 18 (2007), pp.~333--342.
  
\bibitem{Bramanti}
{\sc M.~Bramanti}, {\em An invitation to hypoelliptic operators and {H}\"{o}rmander's
              vector fields}, Springer, Cham, SpringerBriefs in Mathematics (2014), pp.~xii+150.

\bibitem{BramantiCeruttiManfredini}
{\sc M.~Bramanti, M.~Cerutti and M.~Manfredini}, {\em {$\scr L^p$} estimates for some ultraparabolic operators with
              discontinuous coefficients}, J. Math. Anal. Appl. 200 (1996), pp.~332--354.
 
\bibitem{C}
{\sc S.~Chandresekhar}, {\em Stochastic problems in physics and astronomy},
  Rev. Modern Phys., 15 (1943), pp.~1--89.

\bibitem{CC}
{\sc S.~Chapman and T.~G. Cowling}, {\em The mathematical theory of nonuniform
  gases}, Cambridge University Press, Cambridge, third ed.,  (1990).
  
  
\bibitem{CibelliPolidoro}
{\sc G.~Cibelli and S.~Polidoro}, {\em Harnack inequalities and bounds for densities of stochastic processes},
Modern problems of stochastic analysis and statistics, Springer Proc. Math. Stat., 208, (2017) , pp.~67--90.

\bibitem{CNP10}
{\sc C.~Cinti, K.~Nystr\"{o}m and S.~Polidoro}, {\em A note on {H}arnack
  inequalities and propagation sets for a class of hypoelliptic operators},
  Potential Anal., 33 (2010), pp.~341--354.
  
  \bibitem{CNP12}
\leavevmode\vrule height 2pt depth -1.6pt width 23pt, {\em A boundary estimate for non-negative solutions to {K}olmogorov operators in non-divergence form},
  Ann. Mat. Pura Appl. (4), 191 (2012), pp.~1--23.
  
  \bibitem{CNP13}
\leavevmode\vrule height 2pt depth -1.6pt width 23pt, {\em A {C}arleson-type estimate in {L}ipschitz type domains for non-negative solutions to {K}olmogorov operators}, Ann. Sc. Norm. Super. Pisa Cl. Sci. (5), 12 (2013), pp.~439--465.

\bibitem{CPP}
{\sc C.~Cinti, A.~Pascucci and S.~Polidoro}, {\em Pointwise estimates for a
  class of non-homogeneous {K}olmogorov equations}, Math. Ann., 340 (2008),
  pp.~237--264.

\bibitem{CintiPolidoro}
{\sc C.~Cinti and S.~Polidoro}, {\em Bounds on short cylinders and uniqueness
  in {C}auchy problem for degenerate {K}olmogorov equations}, J. Math. Anal.
  Appl., 359 (2009), pp.~135--145.
  
\bibitem{CupiniLanconelli}
{\sc G.~Cupini and E.~Lanconelli}, {\em {O}n {M}ean {V}alue formulas for solutions to linear
second order {PDE}s}, http://cvgmt.sns.it/paper/4323/ (2019).

\bibitem{DM}
{\sc F.~Delarue and S.~Menozzi}, {\em Density estimates for a random noise
  propagating through a chain of differential equations}, J. Funct. Anal., 259
  (2010), pp.~1577--1630.

\bibitem{DV}
{\sc L.~Desvillettes and C.~Villani}, {\em On the trend to global equilibrium
  in spatially inhomogeneous entropy-dissipating systems: the linear
  {F}okker-{P}lanck equation}, Comm. Pure Appl. Math., 54 (2001), pp.~1--42.

\bibitem{DHW}
{\sc J.~Dewynne, S.~Howison and P.~Wilmott}, {\em Option pricing}, Oxford
  Financial Press, Oxford,  (1993).

\bibitem{PDF}
{\sc M.~Di~Francesco and A.~Pascucci}, {\em On a class of degenerate parabolic
  equations of {K}olmogorov type}, AMRX Appl. Math. Res. Express,  (2005),
  pp.~77--116.

\bibitem{PODF}
{\sc M.~Di~Francesco and S.~Polidoro}, {\em Schauder estimates, {H}arnack
  inequality and {G}aussian lower bound for {K}olmogorov-type operators in
  non-divergence form}, Adv. Differential Equations, 11 (2006), pp.~1261--1320.

\bibitem{FS}
{\sc G.~B. Folland and E.~M. Stein}, {\em Estimates for the {$\bar \partial
  _{b}$} complex and analysis on the {H}eisenberg group}, Comm. Pure Appl.
  Math., 27 (1974), pp.~429--522.

\bibitem{GL}
{\sc N.~Garofalo and E.~Lanconelli}, {\em Level sets of the fundamental
  solution and {H}arnack inequality for degenerate equations of {K}olmogorov
  type}, Trans. Amer. Math. Soc., 321 (1990), pp.~775--792.
 
\bibitem{GT1}
{\sc N.~Garofalo and G.~Tralli}, {\em A class of nonlocal hypoelliptic operators and their extensions}, 
arXiv:1811.02968 (2018).

\bibitem{GT2}
\leavevmode\vrule height 2pt depth -1.6pt width 23pt, {\em Hardy-Littlewood-Sobolev inequalities for a class of non-symmetric and non-
doubling hypoelliptic semigroups }, arXiv:1904.12982 (2019).

\bibitem{GT3}
\leavevmode\vrule height 2pt depth -1.6pt width 23pt, {\em Functional inequalities for a class of nonlocal hypoelliptic equations of 
Hörmander type}, arXiv:1905.08887 (2019).

\bibitem{GT4}
\leavevmode\vrule height 2pt depth -1.6pt width 23pt, {\em Nonlocal isoperimetric inequalities for Kolmogorov-Fokker-Planck operators}, 
arXiv:1907.02281 (2019).
 
\bibitem{GIMV}
{\sc F.~Golse, C.~Imbert, C.~Mouhot and A.~F. Vasseur}, {\em Harnack
  inequality for kinetic {F}okker-{P}lanck equations with rough coefficients
  and application to the {L}andau equation}, Ann. Sc. Norm. Super. Pisa Cl.
  Sci. (5), 19 (2019), pp.~253--295.

\bibitem{HAD}
{\sc J.~Hadamard}, {\em Extension \`a l'\'{e}quation de la chaleur d'un
  th\'{e}or\`eme de {A}. {H}arnack}, Rend. Circ. Mat. Palermo (2), 3 (1954),
  pp.~337--346 (1955).

\bibitem{H}
{\sc L.~H{\"o}rmander}, {\em Hypoelliptic second order differential equations},
  Acta Math., 119 (1967), pp.~147--171.

\bibitem{IlIn}
{\sc A.~M. Il\cprime~in}, {\em On a class of ultraparabolic equations}, Dokl.
  Akad. Nauk SSSR, 159 (1964), pp.~1214--1217.
  
  \bibitem{kogoj}
{\sc A.~E. Kogoj}, {\em On the {D}irichlet problem for hypoelliptic evolution
              equations: {P}erron-{W}iener solution and a cone-type
              criterion}, J. Differential Equations, 262
  (2017), pp.~1524--1539.
  
  \bibitem{KogojLanconelli1}
{\sc A.~E. Kogoj and E.~Lanconelli}, {\em One-side {L}iouville   theorems for a class of hypoelliptic ultraparabolic equations}, in Geometric analysis of {PDE} and several complex variables, vol.~368 of Contemp. Math., Amer. Math. Soc., Providence, RI, 2005, pp.~305--312.
  
  \bibitem{KogojLanconelli2}
\leavevmode\vrule height 2pt depth -1.6pt width 23pt, {\em Liouville theorems for a class of linear second-order operators with nonnegative characteristic form}, Bound. Value Probl.,  (2007), ~Art. ID 48232, pp.~16.

\bibitem{kogojlanconellitralli}
{\sc A.~E. Kogoj, E.~Lanconelli and G.~Tralli}, {\em Wiener-landis criterion
  for kolmogorov-type operators}, Discrete \& Continuous Dynamical Systems, 38
  (2018), pp.~24--67.
  
  \bibitem{KogojPinchoverPolidoro}
{\sc A.~E.~Kogoj, Y.~Pinchover and S.~Polidoro}, {\em On Liouville-type theorems and the uniqueness of the positive Cauchy problem for a class of hypoelliptic operators}, J. Evol. Equ., 16 (2016), no. 4, pp.~905--943. 

\bibitem{K1}
{\sc A.~Kolmogorov}, {\em Zufllige bewegungen. (zur theorie der brownschen
  bewegung.).}, Ann. of Math., II. Ser., 35 (1934), pp.~116--117.

\bibitem{K3}
{\sc L.~P. Kuptsov}, {\em The mean value property and the maximum principle for
  second order parabolic equations}, Dokl. Akad. Nauk SSSR, 242 (1978),
  pp.~529--532.

\bibitem{KU2}
\leavevmode\vrule height 2pt depth -1.6pt width 23pt, {\em On parabolic means},
  Dokl. Akad. Nauk SSSR, 252 (1980), pp.~296--301.

\bibitem{K5}
\leavevmode\vrule height 2pt depth -1.6pt width 23pt, {\em Fundamental
  solutions of some second-order degenerate parabolic equations}, Mat. Zametki,
  31 (1982), pp.~559--570, 654.

\bibitem{LP}
{\sc E.~Lanconelli and S.~Polidoro}, {\em On a class of hypoelliptic evolution
  operators}, Rend. Sem. Mat. Univ. Politec. Torino, 52 (1994), pp.~29--63.

\bibitem{L}
{\sc P.~Langevin}, {\em {O}n the theory of {B}rownian motion [{S}ur la
  th\'{e}orie du mouvement brownien]}, C. R. Acad. Sci (Paris), 146 (1908),
  pp.~530--533.
  
\bibitem{LascialfariMorbidelli}
{\sc F.~Lascialfari and D.~Morbidelli}, {\em A boundary value problem for a class of quasilinear
ultraparabolic equations}, Commun. Partial Differ. Equations 23, N.5-6 (1998), pp.~847--868.

\bibitem{LeeMarkus}
{\sc E.~B. Lee and L.~Markus}, {\em Foundations of optimal control theory},
  John Wiley \& Sons, Inc., New York-London-Sydney, 1967.

\bibitem{LO}
{\sc L.~Lorenzi}, {\em Schauder estimates for degenerate elliptic and parabolic
  problems with unbounded coefficients in {${\mathbb R}^N$}}, Differential
  Integral Equations, 18 (2005), pp.~531--566.

\bibitem{LU}
{\sc A.~Lunardi}, {\em Schauder estimates for a class of degenerate elliptic
  and parabolic operators with unbounded coefficients in {${\bf R}^n$}}, Ann.
  Scuola Norm. Sup. Pisa Cl. Sci. (4), 24 (1997), pp.~133--164.

\bibitem{M}
{\sc M.~Manfredini}, {\em The {D}irichlet problem for a class of ultraparabolic
  equations}, Adv. Differential Equations, 2 (1997), pp.~831--866.

\bibitem{ManfrediniPolidoro}
{\sc M.~Manfredini and S.~Polidoro}, {\em Interior regularity for weak solutions of ultraparabolic
              equations in divergence form with discontinuous coefficients}, Boll. Unione Mat. Ital. Sez. B Artic. Ric. Mat. (8),
              1 (1998), pp.~651--675.	

\bibitem{PPP}
{\sc S.~Pagliarani, A.~Pascucci and M.~Pignotti}, {\em Intrinsic {T}aylor
  formula for {K}olmogorov-type homogeneous groups}, J. Math. Anal. Appl., 435
  (2016), pp.~1054--1087.

\bibitem{PA}
{\sc A.~Pascucci}, {\em P{DE} and martingale methods in option pricing}, vol.~2
  of Bocconi \& Springer Series, Springer, Milan; Bocconi University Press,
  Milan, 2011.

\bibitem{PP}
{\sc A.~Pascucci and S.~Polidoro}, {\em The {M}oser's iterative method for a
  class of ultraparabolic equations}, Commun. Contemp. Math., 6 (2004),
  pp.~395--417.

\bibitem{P}
{\sc R.~Peszek}, {\em Oscillations of solutions to the two-dimensional
  {B}roadwell model, an {$H$}-measure approach}, SIAM J. Math. Anal., 26
  (1995), pp.~750--760.

\bibitem{Pini}
{\sc B.~Pini}, {\em Sulla soluzione generalizzata di {W}iener per il primo
  problema di valori al contorno nel caso parabolico}, Rend. Sem. Mat. Univ.
  Padova, 23 (1954), pp.~422--434.

\bibitem{P2}
{\sc S.~Polidoro}, {\em On a class of ultraparabolic operators of
  {K}olmogorov-{F}okker-{P}lanck type}, Matematiche (Catania), 49 (1994),
  pp.~53--105 (1995).			

\bibitem{Punic}
\leavevmode\vrule height 2pt depth -1.6pt width 23pt, {\em Uniqueness and
  representation theorems for solutions of {K}olmogorov-{F}okker-{P}lanck
  equations}, Rend. Mat. Appl. (7), 15 (1995), pp.~535--560 (1996).
  
   \bibitem{PolidoroRagusa1}
{\sc S.~Polidoro and M. A.~Ragusa}, {\em Sobolev-{M}orrey spaces related to an ultraparabolic equation},Manuscripta Math., 96 (1998),
  pp.~371--392 .
  
  \bibitem{PolidoroRagusa2}
 \leavevmode\vrule height 2pt depth -1.6pt width 23pt, {\em H\"{o}lder regularity for solutions of ultraparabolic equations in
   divergence form}, Potential Anal., 14 (2001),
  pp.~341--350 .

\bibitem{PR}
{\sc E.~Priola}, {\em Formulae for the derivatives of degenerate diffusion
  semigroups}, J. Evol. Equ., 6 (2006), pp.~577--600.

\bibitem{R}
{\sc H.~Risken}, {\em The {F}okker-{P}lanck equation}, vol.~18 of Springer
  Series in Synergetics, Springer-Verlag, Berlin, second~ed., 1989.
\newblock Methods of solution and applications.

\bibitem{RS}
{\sc L.~P. Rothschild and E.~M. Stein}, {\em Hypoelliptic differential
  operators and nilpotent groups}, Acta Math., 137 (1976), pp.~247--320.

\bibitem{Sonin}
{\sc I.~M. Sonin}, {\em A class of degenerate diffusion processes}, Teor.
  Verojatnost. i Primenen, 12 (1967), pp.~540--547.
  
  \bibitem{tersenov}
{\sc Al.~S. Tersenov}, {\em Ultraparabolic equations and unsteady heat transfer}, 
J. Evol. Equ. 5 (2005), no. 2, 277–289. 

\bibitem{WZ4}
{\sc W.~Wang and L.~Zhang}, {\em The {$C^\alpha$} regularity of a class of
  non-homogeneous ultraparabolic equations}, Sci. China Ser. A, 52 (2009),
  pp.~1589--1606.

\bibitem{WZ3}
{\sc W.~Wang and L.~Zhang}, {\em The {$C^\alpha$} regularity of weak solutions
  of ultraparabolic equations}, Discrete Contin. Dyn. Syst., 29 (2011),
  pp.~1261--1275.

\bibitem{Weber}
{\sc M.~Weber}, {\em The fundamental solution of a degenerate partial
  differential equation of parabolic type}, Trans. Amer. Math. Soc., 71 (1951),
  pp.~24--37.

\bibitem{ZAB}
{\sc J.~Zabczyk}, {\em Mathematical control theory: an introduction}, Systems
  \& Control: Foundations \& Applications, Birkh\"{a}user Boston, Inc., Boston,
  MA, 1992.

\end{thebibliography}
\end{document}